\renewenvironment{proof}[1][\proofname]{%
   \par\pushQED{\qed}\normalfont%
   \topsep6\p@\@plus6\p@\relax
   \trivlist\item[\hskip\labelsep\bfseries#1\@addpunct{.}]%
   \ignorespaces
}{%
   \popQED\endtrivlist\@endpefalse
}
\title{Order-Preserving outer automorphisms of free and surface groups}
\author{Jonathan Johnson and Khanh Le}
\date{\today}
\begin{document}
\begin{abstract}
We give a complete classification of finite subgroups of outer automorphisms that preserve bi-orders on non-abelian free groups and bi-orderable surface groups. We also give another new criterion for an outer automorphism of $F_n$ induced by the action of an $n$-strand braid to preserve a bi-order on $F_n$ in terms of the Burau representation of $B_n$. Using the new criterion, we produce examples of order-preserving braids whose underlying permutation is a full cycle, which answers in affirmative a question of Kin and Rolfsen. 
\end{abstract}

\maketitle

\section{Introduction}
A group $G$ is \emph{bi-orderable} if it admits a total order $\prec$ that is invariant under both left and right multiplication. That is, if $g \prec h$ in $G$, then $fg \prec fh$ and $gf \prec hf$ for any $f\in G$. We call this total order a \emph{bi-order} on $G$. Alternatively, a bi-order on a group $G$ can be given by specifying a subset $P$ of $G$ that satisfies the following properties:
\begin{enumerate}
    \item $G = P \sqcup \{1\} \sqcup P^{-1}$,
    \item $P \cdot P \subset P$,
    \item $gPg^{-1} \subset P$ for all $g \in G$.
\end{enumerate}
The set $P$ determines a total order $\prec$ on $G$ by 
\[
g \prec h \text{ if and only if } g^{-1}h \in P,
\]
and is called \emph{a positive cone of $G$.} If the total order $\prec$ is known only to be left-invariant then $\prec$ is called a \emph{left-order} on $G$. The group $G$ is then said to be \emph{left-orderable.} The theory of orderable groups has attracted recent attention due to their connections with low-dimensional topology in the context of the L-space conjecture which concerns left-orderability of the fundamental group of rational homology 3-spheres, see \cite{BoyerGordonWatson13, Juhasz15}. 

Bi-orderability of the fundamental group of 3-manifolds has also been considered since the formulation of the L-space conjecture. In particular, Boyer, Rolfsen and Wiest gave a classification of Seifert fibered spaces with bi-orderable fundamental groups \cite{BoyerRolfsenWiest05}. Perron and Rolfsen give a sufficient criterion for the fundamental groups of a fibered 3-manifolds to be bi-orderable in terms of the monodromy of the fiber \cite{PerronRolfsen06, PerronRolfsen03}. Continuing on the theme of studying bi-orderability of 3-manifold groups, Kin and Rolfsen initiated the study of bi-orderability of the complement of a braided link, which is the mapping torus of a homeomorphism of an $n$-punctured disk induced by a braid in $B_n$ \cite{KinRolfsen18}. These are natural examples of fibered 3-manifold groups whose monodromy satisfies a necessary condition by \cite{ClayRolfsen12} for the 3-manifold group to be bi-orderable. In their article, Kin and Rolfsen classified the complement of braided link with bi-orderable fundamental group among those with finite-order monodromy \cite[Theorem 4.10]{KinRolfsen18} and gave more examples of ones with non-bi-orderable fundamental group. In this paper, we continue the investigation into bi-orderability of groups arising in low-dimensional topology by looking at mapping torus of free and surface groups.

Many groups are known to be bi-orderable: torsion-free abelian groups \cite{Levi42}, free groups \cite{Magnus35} and fundamental groups of closed surfaces except for the real projective plane and the Klein bottle \cite{RolfsenWiest01}.
New examples of orderable groups can be constructed from known examples using group extensions. Before stating the construction, we observe that given a bi-orderable group $G$, the group $\Out(G)$ acts on the set of all bi-orders of $G$, $\BiO(G)$, via:
\[
[\varphi]\cdot P = \varphi(P)
\]
where $\varphi \in \Aut(G)$ and $P$ a positive cone of a bi-order on $G$. The action is well-defined since positive cones of every bi-order of $G$ is invariant under conjugation by $G$. We also recall that given any short exact sequence of groups,
\[
1 \to K \to G \xrightarrow{\pi} Q \to 1,
\]
we have an associated action $\rho:Q \to \Out(K)$ given by 
\[
\rho(q)(k) = gkg^{-1} \text{ where } \pi(g) = q
\]
for any $q \in Q$, $k\in K$. Now we state a well-known construction of a bi-orderable group using group extensions:

\begin{prop}\label{prop:NewBOGroupViaGroupExtension}
Let $G$ be an extension of two bi-orderable groups $K$ and $Q$. In particular, we have the following short exact sequence of groups
\[
1 \to K \to G \to Q \to 1
\]
where $K$ and $Q$ are bi-orderable groups. Let $\rho:Q \to \Out(K)$ be the action associated to the short exact sequence. If there exists a bi-order $P \in \BiO(K)$ such that $\rho(Q)$ fixes $P$, then $G$ is bi-orderable. 
\end{prop}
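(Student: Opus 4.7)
The plan is to build an explicit positive cone on $G$ from the given data by combining the positive cones on $K$ and $Q$ lexicographically, with $Q$ on top. Fix a positive cone $P_K\in\BiO(K)$ that is $\rho(Q)$-invariant and a positive cone $P_Q\in\BiO(Q)$. Identifying $K$ with its image in $G$, define
\[
P_G \;=\; \pi^{-1}(P_Q)\;\sqcup\; P_K.
\]
I will verify the three positive-cone axioms for $P_G$.

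The trichotomy axiom is straightforward: any $g\in G$ has $\pi(g)\in P_Q\sqcup\{1\}\sqcup P_Q^{-1}$; in the first and third cases $g$ lies in $P_G$ or $P_G^{-1}$ respectively, and in the middle case $g\in K$, where the trichotomy for $P_K$ takes over. Closure under multiplication is also routine: writing $h_1h_2\in P_G\cdot P_G$, if either factor has $\pi$-image in $P_Q$ then so does the product; and if both project to $1$, then both belong to $K$ with positive $P_K$-sign, so the product lies in $P_K\subset P_G$.

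The interesting axiom, and the only place where the $\rho(Q)$-invariance hypothesis enters, is conjugation invariance. Take $g\in G$ and $h\in P_G$. If $\pi(h)\in P_Q$, then $\pi(ghg^{-1})=\pi(g)\pi(h)\pi(g)^{-1}\in P_Q$ because $P_Q$ is conjugation-invariant in $Q$, so $ghg^{-1}\in P_G$. If instead $h\in P_K$, then $ghg^{-1}\in K$ because $K\trianglelefteq G$, and the automorphism $c_g\colon K\to K$ of conjugation by $g$ represents the outer automorphism $\rho(\pi(g))\in\Out(K)$; by assumption this outer class fixes $P_K$, so $c_g(P_K)$ and $P_K$ differ by at most an inner automorphism of $K$, and inner automorphisms of $K$ preserve $P_K$ automatically since $P_K$ is a bi-order positive cone on $K$. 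Hence $c_g(P_K)=P_K$ and $ghg^{-1}\in P_K\subset P_G$.

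The main subtlety I anticipate is precisely this last step: the hypothesis is about the \emph{outer} action $\rho$, but the statement we need is about genuine conjugation inside $G$. The resolution is the observation just made, that the ambiguity between $c_g$ and the chosen representative of $\rho(\pi(g))$ is an inner automorphism of $K$, which acts trivially on $\BiO(K)$ because bi-order positive cones are conjugation-invariant. Once this is in hand, the three axioms are verified and $P_G$ witnesses the bi-orderability of $G$.
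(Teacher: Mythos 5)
Your proposal is correct, and it is essentially the paper's (implicit) argument: the paper leaves this well-known proposition unproved but uses exactly the same lexicographic positive cone $\pi^{-1}(P_Q)\sqcup P_K$ in its special cases (e.g.\ \cref{fact:ConstructingBiOrdersOnFGFreeGroup} and the proof of \cref{thm:OPViaBurauRep}). Your handling of the outer-versus-inner subtlety — that any representative of $\rho(\pi(g))$ differs from $c_g$ by an inner automorphism of $K$, which preserves $P_K$ by conjugation-invariance of bi-order positive cones — is precisely the observation the paper records when noting that the $\Out(K)$-action on $\BiO(K)$ is well defined.
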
 
Due to \cref{prop:NewBOGroupViaGroupExtension}, we have the following concept. Given a bi-orderable group $K$, a subgroup of $\Out(K)$ is \emph{order-preserving} if it fixes a bi-order in $\BiO(K)$. It is natural to ask

\begin{quest}\label{quest:OPSubgroupOfOuter}
    Given a bi-orderable group $G$, which subgroups of $\Out(G)$ are order-preserving?
\end{quest}

The first result in our paper is a complete characterization of order-preserving finite subgroups of $\Out(G)$ when $G$ is either a non-abelian free group or the fundamental group of a closed surface group except for the real projective plane and the Klein bottle. In \cite{BoyerRolfsenWiest05}, the authors proved that all closed surfaces except for $\mathbb{RP}^2$ and the Klein bottle are precisely the closed surfaces with bi-orderable fundamental groups. For convenience, we refer to the set of fundamental groups of closed surfaces, other than the real projective plane and the Klein bottle, as \emph{bi-orderable surface groups}.

\begin{thm}\label{thm:FiniteOrderOP}
    Let $G$ be either a finitely generated non-abelian free group or a bi-orderable surface group. Let us denote by $\pi:\Aut(G) \to \Out(G)$ the canonical quotient map in the following short exact sequence:
    \[
    1 \to G \to \Aut(G) \to \Out(G) \to 1.
    \]
    Suppose that $H \leq \Out(G)$ is a finite subgroup. Then $H$ is order-preserving if and only if the full preimage $\pi^{-1}(H)$ under $\pi$ is torsion-free. 
\end{thm}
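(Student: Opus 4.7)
The plan is to prove both implications separately: the forward direction by a short algebraic contradiction using the order axioms, and the reverse direction by a geometric realization of $H$ as a group of symmetries of a suitable space.

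For the forward direction, I would fix a bi-order with positive cone $P$ fixed by $H$. Because positive cones of bi-orders are invariant under inner automorphisms (property 3), every $\varphi \in \pi^{-1}(H)$—not just a particular coset representative—satisfies $\varphi(P) = P$ on the nose. I would then suppose some $\varphi \in \pi^{-1}(H)$ has finite order $k \geq 2$ and seek a contradiction: picking $g$ with $\varphi(g) \neq g$, and replacing $g$ by its inverse if necessary to arrange $g^{-1}\varphi(g) \in P$, repeated application of $\varphi$ gives $\varphi^{i}(g)^{-1}\varphi^{i+1}(g) \in P$ for each $i$; multiplying and using $P\cdot P \subset P$ yields $g^{-1}\varphi^{k}(g) \in P$, but $\varphi^{k}(g)=g$, forcing $1 \in P$ and contradicting $G = P \sqcup \{1\} \sqcup P^{-1}$. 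Hence $\pi^{-1}(H)$ is torsion-free.

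For the reverse direction, suppose $\pi^{-1}(H)$ is torsion-free. I would realize $H$ as symmetries of an aspherical space $X$ with $\pi_1(X) \cong G$ inducing the given outer action: Culler's realization theorem for finite subgroups of $\Out(F_n)$ when $G$ is free, and Kerckhoff's solution of the Nielsen realization problem when $G$ is a bi-orderable surface group. Choosing a basepoint, the extension $1 \to G \to \pi^{-1}(H) \to H \to 1$ is canonically isomorphic to the orbifold extension $1 \to \pi_1(X) \to \pi_1^{\mathrm{orb}}(X/H) \to H \to 1$. Since $\pi_1^{\mathrm{orb}}(X/H)$ has torsion exactly when the action has a nontrivial point stabilizer, the hypothesis forces the $H$-action on $X$ to be free, so $X/H$ is itself a graph or a closed surface with $\pi^{-1}(H) \cong \pi_1(X/H)$. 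In the surface case the Euler characteristic identity $\chi(X/H) = \chi(X)/|H| < 0$ excludes $\mathbb{RP}^2$ and the Klein bottle, so $X/H$ has bi-orderable fundamental group by \cite{BoyerRolfsenWiest05}; in the graph case this is immediate since free groups are bi-orderable. Restricting any bi-order on $\pi^{-1}(H)$ to $G$ then yields a positive cone on $G$ invariant under conjugation by every lift $\tilde{h}$ of every $h \in H$, and the outer action of $h$ on $G$ is exactly this conjugation, so $H$ fixes the restricted bi-order.

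The main obstacle is the geometric bookkeeping in the reverse direction: matching the abstract extension $\pi^{-1}(H)$ with the orbifold fundamental group coming from the realization, and verifying that torsion in this orbifold group corresponds precisely to branch points of the $H$-action. Once this dictionary is in place, freeness of the action and the subsequent Euler characteristic computation are straightforward, but the two cases (free group versus surface group) must be handled with slightly different realization inputs while ensuring uniformly that no non-bi-orderable quotient surface arises.
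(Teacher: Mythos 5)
Your proof is correct, but the second half takes a genuinely different route from the paper's. The forward direction is essentially the paper's argument in a different guise: the paper notes that a nontrivial finite-order automorphism has a finite orbit of size at least two admitting no invariant linear order, whereas you telescope $g^{-1}\varphi(g)\in P$ to $1\in P$; both versions rest, as you correctly note, on conjugation-invariance of positive cones so that every element of $\pi^{-1}(H)$, not just a chosen coset representative, fixes $P$. For the converse, the paper never realizes $H$ geometrically: it observes that $\Gamma=\pi^{-1}(H)$ contains $G$ as a finite-index subgroup and invokes Stallings \cite{Stallings68} (a torsion-free virtually free group is free) and Eckmann--Linnell--M\"uller \cite{Eckmann84} (the surface-group analogue) to conclude that $\Gamma$ is itself a bi-orderable free or surface group, then restricts a bi-order on $\Gamma$ to $G$ exactly as you do in your last step. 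Your route instead uses Culler's realization of finite subgroups of $\Out(F_n)$ on graphs and Kerckhoff's Nielsen realization on surfaces, matches $\pi^{-1}(H)$ with the orbifold extension, and uses torsion-freeness to force a free action, so that $\Gamma\cong\pi_1(X/H)$ with the Euler characteristic computation excluding $\mathbb{RP}^2$ and the Klein bottle. What this buys is the explicit geometric picture (it proves the paper's subsequent remark that order-preserving finite subgroups are geometric, and gives the divisibility of $\chi(G)$ by $|H|$ for free), at the cost of heavier input: the realization theorems, fixed-point theorems on trees and on $\mathbb{H}^2$ to translate torsion into point stabilizers, and the identification of extensions, which silently uses that $G$ is centerless (uniqueness of an extension inducing a given injective outer action requires $Z(G)=1$; this holds here since $G$ is non-abelian, and is the same hypothesis implicitly needed for the short exact sequence in the statement). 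Make that centerlessness point explicit when you set up the dictionary between $\pi^{-1}(H)$ and $\pi_1^{\mathrm{orb}}(X/H)$; with it, your argument is complete, just longer than the paper's purely algebraic one.
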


\cref{thm:FiniteOrderOP} is a generalization of Kin and Rolfsen's classification of order-preserving periodic braids\cite[Theorem 4.10]{KinRolfsen18}. Khramtsov gave an algorithm that, given a free-by-finite group, the algorithm decomposes the group as the fundamental group of a finite graph of finite groups \cite[Theorem 1]{Khramtsov95}. Using this algorithm and \cref{thm:FiniteOrderOP}, we have the following corollary:

\begin{cor}
\label{cor:OrderabilityFreeByFiniteIsAlgorthimicallyDeciable}
Let $G$ be a finitely generated non-abelian free group. There exists an algorithm that: given any finite subgroup $H$ of $\Out(G)$, the algorithm will out put YES if $H$ is order-preserving and NO otherwise.
\end{cor}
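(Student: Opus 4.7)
The plan is to reduce the decision problem directly to an application of \cref{thm:FiniteOrderOP} combined with Khramtsov's structural algorithm. By \cref{thm:FiniteOrderOP}, $H$ is order-preserving if and only if $\pi^{-1}(H) \leq \Aut(G)$ is torsion-free. Since $G = F_n$ sits inside $\pi^{-1}(H)$ as a normal subgroup of finite index $|H|$, the group $\pi^{-1}(H)$ is virtually free, i.e.\ free-by-finite; this is exactly the class of groups to which Khramtsov's algorithm applies.

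I would then proceed in three steps. \emph{Step 1:} Assuming $H$ is specified by a finite list of representative automorphisms $\varphi_1, \ldots, \varphi_k \in \Aut(G)$ (which is how an explicit finite subgroup of $\Out(G)$ would naturally be presented), compute a finite presentation for $\pi^{-1}(H)$. Explicitly, $\pi^{-1}(H)$ is generated by $\varphi_1, \ldots, \varphi_k$ together with the inner automorphisms $i_{x_1}, \ldots, i_{x_n}$ coming from a fixed free basis of $G$, and a finite presentation can be obtained from the standard presentation of a group extension with finitely presented kernel $F_n$ and finite quotient $H$ once the multiplication table of $H$ and the action on $F_n$ have been tabulated. \emph{Step 2:} Feed this presentation into Khramtsov's algorithm to obtain an explicit decomposition of $\pi^{-1}(H)$ as the fundamental group of a finite graph of finite groups $(\mathcal{G}, Y)$. \emph{Step 3:} Output YES if and only if every vertex group of $(\mathcal{G}, Y)$ is the trivial group.

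The correctness of Step 3 follows from Bass--Serre theory: every finite subgroup of $\pi_1(\mathcal{G}, Y)$ is conjugate into some vertex group, so $\pi^{-1}(H)$ is torsion-free if and only if all vertex groups are trivial, in which case $\pi^{-1}(H)$ is just the free group on the fundamental group of the underlying graph $Y$. Combined with \cref{thm:FiniteOrderOP}, this correctly decides order-preservation.

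The main point requiring care is Step 1 --- producing a concrete finite presentation of $\pi^{-1}(H)$ from the input data. This is not a genuine obstacle, since $\Aut(F_n)$ itself admits an explicit finite presentation (for instance, via Nielsen's generators and relations) and the subgroup $\pi^{-1}(H)$ has a canonical generating set described above; nonetheless it is the only step with real bookkeeping. Once the presentation is in hand, Steps 2 and 3 are algorithmic black-boxes.
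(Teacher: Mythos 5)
Your proposal is correct and follows essentially the same route as the paper: reduce via \cref{thm:FiniteOrderOP} to deciding torsion-freeness of the virtually free group $\pi^{-1}(H)$, then invoke Khramtsov's algorithm to write it as a finite graph of finite groups and check that all vertex groups are trivial. The paper leaves the bookkeeping (presenting $\pi^{-1}(H)$, the Bass--Serre detection of torsion) implicit, and your Steps 1 and 3 simply make those implicit details explicit.
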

\noindent
We also remark that the method of proving \cref{thm:FiniteOrderOP} yields a construction that provides new examples of infinite-order order-preserving outer automorphism of finitely generated non-abelian free groups and bi-orderable surface groups. See \cref{cor:OPOuterAutomorphismCommutingWithFiniteOrder} for the construction.  

The second result in our paper gives a new criterion for an infinite-order outer automorphism of a finitely generated free group to be order-preserving. To motivate this result, we recall a criterion which guarantees that an automorphism is order-preserving, due to Perron and Rolfsen \cite[Theorem 2.6]{PerronRolfsen03}. Let $G$ be a rank-$n$ free group. Each automorphism $\varphi\in\Aut(G)$ induces a map in $\varphi_{\mathrm{ab}}\in\GL_{n}(\mathbb{Z})$ on the abelianization of $G$.
Composing with the inclusion map $\GL_{n}(\mathbb{Z})\subset\GL_{n}(\mathbb{C})$ produces a representation $\rho:\Aut(G)\to \GL_{n}(\mathbb{C})$.
Perron and Rolfsen proves the following sufficient condition for an element in $\Aut(G)$ to be order-preserving.

\begin{thm}[{\cite[Theorem 2.6]{PerronRolfsen03}}]
If for $\varphi\in\Aut(G)$ all the eigenvalues of $\rho(\varphi) = \varphi_{\mathrm{ab}}$ are real and positive, then $\varphi$ is order-preserving.
\end{thm}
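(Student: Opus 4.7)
The plan is to use the Magnus expansion to produce a bi-order on $F_n$ whose construction is adapted to $\varphi$. Let $R = \mathbb{R}\langle\langle X_1,\dots,X_n\rangle\rangle$ denote the ring of non-commutative formal power series in $n$ variables, and let $\mu\colon F_n \hookrightarrow R^\times$ be the Magnus embedding $x_i \mapsto 1 + X_i$. Any total order on the monomials of $R$ that refines the grading by total degree induces a bi-order on $F_n$: declare $g \succ 1$ if and only if the coefficient of the smallest nonzero monomial of $\mu(g) - 1$ is positive.

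Next, exploit the hypothesis. Since $\varphi_{\mathrm{ab}}$ has all real positive eigenvalues, choose a basis $Y_1,\dots,Y_n$ of the $\mathbb{R}$-span of the $X_i$ in which $\varphi_{\mathrm{ab}}$ is lower-triangular with positive diagonal entries $\lambda_1,\dots,\lambda_n$. Rewrite $R = \mathbb{R}\langle\langle Y_1,\dots,Y_n\rangle\rangle$ and order the $Y$-monomials first by total degree and then lexicographically with $Y_1 \prec Y_2 \prec \cdots \prec Y_n$. Let $\prec$ denote the resulting bi-order on $F_n$. Extend $\varphi$ to a continuous algebra automorphism $\tilde\varphi$ of $R$ by $\tilde\varphi(X_i) = \mu(\varphi(x_i))-1$; this map preserves the augmentation ideal $I$ and all of its powers $I^k$, and its degree-$1$ part is $\varphi_{\mathrm{ab}}$. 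In the $Y$-basis, $\tilde\varphi(Y_i) = \lambda_i Y_i + \sum_{j>i} c_{ji} Y_j + (\text{terms in } I^2)$.

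Given $g\in F_n\setminus\{1\}$, write the leading monomial of $\mu(g)-1$ as $c\,Y_{i_1}\cdots Y_{i_k}$ with $c\neq 0$. Since $\tilde\varphi$ preserves $I^k$, the series $\mu(\varphi(g))-1 = \tilde\varphi(\mu(g)-1)$ lies in $I^k$. Because each $\tilde\varphi(Y_i)$ equals $\lambda_i Y_i$ plus strictly lex-larger linear terms (together with pieces of degree $\geq 2$), every degree-$k$ monomial in $\tilde\varphi$ applied to a degree-$k$ monomial $Y_J$ is $\succeq Y_J$ in the lex order. Hence the lex-smallest degree-$k$ monomial of $\mu(\varphi(g))-1$ is $Y_{i_1}\cdots Y_{i_k}$ itself, arising uniquely from the leading term of $\mu(g)-1$ with coefficient $c\,\lambda_{i_1}\cdots\lambda_{i_k}$; this has the same sign as $c$. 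Therefore $\varphi$ preserves $\prec$.

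The main technical point is the joint choice of basis and monomial order: the lower-triangular form makes each $\tilde\varphi(Y_i)$ equal to $\lambda_i Y_i$ plus lex-larger corrections, so the leading monomial of $\mu(g)-1$ survives as the leading monomial of $\mu(\varphi(g))-1$; the positivity of the $\lambda_i$'s then ensures the sign of the leading coefficient is preserved uniformly in $g$. Working over $\mathbb{R}$ (rather than $\mathbb{Z}$) is harmless since the bi-order only depends on the sign of a single real coefficient.
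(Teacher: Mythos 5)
Your argument is correct, and it is essentially the same mechanism the paper (following Perron--Rolfsen) uses: triangularize $\varphi_{\mathrm{ab}}$ over $\mathbb{R}$ using the real positive eigenvalues, order leading terms degree-first and then lexicographically, and observe that the leading term survives under $\varphi$ scaled by a positive product of eigenvalues, which is exactly \cref{prop:PositiveEigenvaluesImpliesOP} combined with the lower-central-series construction of \cref{fact:ConstructingStandardOrderOnFreeGroup}. The only difference is packaging: your Magnus expansion encodes in its degree-$k$ coefficients precisely the embedding of $K_m/K_{m+1}$ into tensor powers of the abelianization that the paper invokes via \cref{fact:LowerCentralQuotientsEmbedInTensorPowers}, so the two proofs coincide in substance.
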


We prove an analogous theorem for the Artin action of the braid group on $n$ strands on the free group of rank $n$ \cref{eq:ArtinAction} by formulating a positive eigenvalues condition for the reduced Burau representation for braid groups. We briefly recall the background on the reduced Burau representation. The braid group $B_n$ is isomorphic to the mapping class group of the $n$-punctured disk $D_n$ fixing the boundary component pointwise. The induced action on the fundamental group, $F$, of $D_n$ with a basepoint on the boundary is known as the Artin action of $B_n$ on the free group on $n$ generators. The total algebraic winding number around the punctures of $D_n$ defines a homomorphism $\sigma: F \to \mathbb{Z}$. The action of the mapping class group preserves the total algebraic winding number and lifts to the infinite cyclic cover $\widetilde{D}_n$ of $D_n$ that corresponds to $K = \ker(\sigma)$. The first homology group of $K$ naturally has the structure of a free $\mathbb{Z}[t^{\pm 1}]$-module. The action of the braid group on $H_1(K;\mathbb{Z})$ is a module action and defines the reduced Burau representation $\rho:B_n \to \GL_{n-1}(\mathbb{Z}[t^{\pm 1}])$, see \cite[Section 3.2.1, 3.3 and Remark 3.11]{kasselturaev08braid}. To formulate the condition of positive real eigenvalues, we extend the coefficients from $\mathbb{Z}[t^{\pm 1}]$ to the field of Puiseux series, denoted as $\mathbb{E}$, see \cref{subsec:puiseux} for the definition of $\mathbb{E}$. The field $\mathbb{E}$ is a real closed field and admits a unique ordering turning $\mathbb{E}$ into an ordered field. This ordering on $\mathbb{E}$ makes it possible to talk about positivity of elements. See \cref{subsec:OrderedRingAndFields} and \cref{subsec:puiseux} for precise definitions of these concepts. Our main result in this direction is 

\begin{thm}\label{thm:OPViaBurauRep}
    Let $\rho:B_n \to \GL_{n-1}(\mathbb{E})$ be the (reduced) Burau representation. If for a braid $\beta$ all eigenvalues of $\rho(\beta)$, counted with multiplicity,
    are positive in $\mathbb{E}$ equipped with the unique ordering, then $\beta$ is order-preserving.  
\end{thm}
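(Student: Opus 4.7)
The plan is to adapt Perron and Rolfsen's Magnus-expansion proof to the kernel $K = \ker(\sigma) \subset F_n$, using the reduced Burau representation in place of the abelianization. First I fix $x \in F_n$ with $\sigma(x) = 1$ so that $F_n = K \rtimes \langle x \rangle$; since $\beta$ preserves $\sigma$, its induced automorphism $\beta_*$ descends to the identity on $F_n/K \cong \mathbb{Z}$. Any bi-order on $K$ that is simultaneously invariant under $\beta_*|_K$ and under conjugation by $x$ therefore lifts lexicographically to a bi-order on $F_n$ fixed by $\beta$, so the problem reduces to bi-ordering $K$ compatibly with these two commuting actions. On $K^{\mathrm{ab}} = H_1(\widetilde{D}_n;\mathbb{Z})$, which is a free $\mathbb{Z}[t^{\pm 1}]$-module of rank $n-1$, conjugation by $x$ acts as multiplication by $t$ while $\beta_*$ acts by the reduced Burau matrix $\rho(\beta)$, and these two actions commute by $\mathbb{Z}[t^{\pm 1}]$-linearity.

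After extending scalars to the real-closed field $\mathbb{E}$, the positivity hypothesis on the eigenvalues of $\rho(\beta)$ lets me conjugate $\rho(\beta)$ into upper-triangular form with positive diagonal entries over $\mathbb{E}$, while the $t$-action remains the positive scalar $t$. I would then construct a Magnus-type embedding of $K$ into a completed non-commutative power series ring over $\mathbb{E}$ whose degree-one part is $K^{\mathrm{ab}}$ with coefficients in $\mathbb{E}$, equipped with precisely these $\beta_*$- and $t$-actions and written in the triangularizing basis --- essentially a twisted or equivariant version of the classical Magnus expansion arising from Fox calculus on the infinite cyclic cover. A bi-order on $K$ is then defined in the standard Magnus style: declare an element positive when the leading coefficient of its expansion, with respect to a fixed monomial term order refining the triangular form, is positive in $\mathbb{E}$. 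Because the diagonal entries of both actions are positive in $\mathbb{E}$, the leading coefficient is rescaled only by a positive element of $\mathbb{E}$ under either action, so the resulting bi-order is preserved by $\beta_*|_K$ and by $x$-conjugation.

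The main obstacle I anticipate is the construction and verification of this twisted Magnus embedding --- in particular, confirming that it realizes the reduced Burau representation literally on the degree-one part and is compatible with a monomial term order refining the upper-triangular form given by the positivity hypothesis. Once this framework is in place, the positivity-of-leading-coefficient argument mirrors Perron and Rolfsen's reasoning, and the only remaining step is to verify that the bi-order on $K$ extends through the semidirect product decomposition of $F_n$ to give the promised bi-order on $F_n$ fixed by $\beta$, which is exactly the reduction carried out at the start.
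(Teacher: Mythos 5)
Your overall strategy is the same as the paper's: reduce to the kernel $K=\ker\mu$, note that conjugation by $x$ and $\beta|_K$ commute and act on $H_1(K)$ through $t$ and the reduced Burau matrix, and then run a Magnus/lower-central-series argument in which positivity of eigenvalues over the real closed field $\mathbb{E}$ yields an invariant order on each graded piece, lexicographically reassembled into a bi-order on $K$ and then on $F$. However, the step you flag as ``the main obstacle'' is not a technicality to be checked later; it is a genuine gap, and in the form you state it the construction is in serious trouble. Your proposed twisted Magnus embedding has degree-one part $H_1(K)\otimes_{\mathbb{Z}[t^{\pm1}]}\mathbb{E}$, an $(n-1)$-dimensional $\mathbb{E}$-space on which $t$ acts as the central scalar $t$; consequently the degree-$m$ graded piece is a tensor power over $\mathbb{E}$ (equivalently over $\mathbb{Z}[t^{\pm1}]$). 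But the lower central series quotients of $K$ do not inject into such tensor powers: for $w\in K$ the commutator $[w,xwx^{-1}]$ is nontrivial in $K_2/K_3$, yet its image $u\otimes tu-tu\otimes u$ vanishes in $H_1(K)\otimes_{\mathbb{Z}[t^{\pm1}]}H_1(K)$ because the scalar $t$ can be moved across the tensor sign. So an embedding of $K$ into a completed power series ring over $\mathbb{E}$ in $n-1$ variables, equivariant in the way you describe, cannot detect such elements at the expected degree, and its injectivity (the whole basis of the ``leading coefficient'' order) is unestablished and doubtful.

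If instead you retreat to the honest classical Magnus embedding, whose degree-$m$ piece is the tensor power of $H_1(K)$ over $\mathbb{Z}$ (or $\mathbb{R}$) --- which is what the paper uses, via the Cai--Clay--Rolfsen lemma on $I^m/I^{m+1}$ --- then injectivity is fine, but your positivity argument breaks at a different point: on that graded piece the $x$-conjugation acts by $t\otimes\cdots\otimes t$ and $\rho(\beta)^{\otimes m}$ has ``eigenvalues'' $\lambda_{i_1}\otimes\cdots\otimes\lambda_{i_m}$, which are elements of $\mathbb{E}^{\otimes_{\mathbb{R}}m}$, not of $\mathbb{E}$. So the leading coefficient is rescaled by elements of $\mathbb{E}^{\otimes_{\mathbb{R}}m}$, and you need an ordering on this ring (or its fraction field) whose positive cone contains $Q^{\otimes m}$ and $t^{\otimes m}$. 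This is the paper's actual technical contribution (the ordering $Q_m$ on $\mathbb{E}_m=\mathbb{E}^{\otimes_\mathbb{R} m}$ and the triangularization of $\rho(\beta)^{\otimes m}$ over it), and it is not automatic: tensor products of ordered fields need not admit any ordering, as the example $\mathbb{Q}(\sqrt2)\otimes_{\mathbb{Q}}\mathbb{Q}(\sqrt2)$, which has zero divisors, shows. Your sketch supplies neither the injective twisted embedding nor the ordering on the tensor-power coefficient ring, so the central step of the proof is missing.
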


The result can be directly generalized to the case of an arbitrary outer automorphism of a free or bi-orderable surface group that leaves a cohomology class invariant. As an application, we apply \cref{thm:OPViaBurauRep} to produce many new examples of order-preserving 3-strand braids. Before stating the result of our application, we recall the classification of 3-strand braids up to conjugation due to Murasugi:

\begin{thm}\cite{Murasugi74}
    \label{thm:MurasugiClassification}
    Let $\beta$ be a braid word in $B_3$. Then $\beta$ is conjugate to one of the following:
    \begin{enumerate}[label = (\alph*)]
        \item $\sigma_1\sigma_2^{-a_1}\dots\sigma_1\sigma_2^{-a_k}\Delta^{2d}$, where $k\geq 1$ and $a_i \geq 0$ with at least one $a_i \neq 0$ \label{murpsuedoanosov}
        \item $\sigma_1^k\Delta^{2d}$ for $k \in \mathbb{Z}$ \label{murreduce}
        \item $\sigma_1^k\sigma_2^{-1}\Delta^{2d}$ where $k \in \{-1,-2,-3\}$ \label{murperiodic}
    \end{enumerate}
    where $\Delta = (\sigma_1\sigma_2\sigma_1)^2$ and $d\in \mathbb{Z}$. 
\end{thm}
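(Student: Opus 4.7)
The plan is to exploit the classical presentation $B_3 \cong \langle a, b \mid a^2 = b^3\rangle$, where $a = \sigma_1\sigma_2\sigma_1 = \Delta$ and $b = \sigma_1\sigma_2$, together with the fact that $Z(B_3) = \langle \Delta^2\rangle = \langle a^2\rangle = \langle b^3\rangle$. Quotienting by the center yields $\overline{B_3} := B_3/Z(B_3) \cong (\mathbb{Z}/2)\ast(\mathbb{Z}/3) \cong \mathrm{PSL}_2(\mathbb{Z})$, with $\bar a$ of order $2$ and $\bar b$ of order $3$. Because $\Delta^{2d}$ is central, each conjugacy class in $B_3$ is determined by its image conjugacy class in $\overline{B_3}$ together with an integer $d \in \mathbb{Z}$, so it suffices to classify conjugacy classes in $\overline{B_3}$ and then exhibit explicit braid-word representatives matching the three Murasugi forms.

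For the first step I would invoke the standard normal-form theory for conjugacy in free products: every element of $(\mathbb{Z}/2)\ast(\mathbb{Z}/3)$ is conjugate either (i) to an element of one factor — the nontrivial cases being $\bar a, \bar b, \bar b^2$, yielding three finite-order conjugacy classes — or (ii) to a cyclically reduced alternating word $\bar a\bar b^{\delta_1}\bar a\bar b^{\delta_2}\cdots\bar a\bar b^{\delta_k}$ with $\delta_i\in\{1,2\}$, unique up to cyclic permutation of the pairs $(\bar a, \bar b^{\delta_i})$. The trivial class lifts to $\Delta^{2d}$, the $k=0$ sub-case of Murasugi's (b); the three type-(i) classes will produce case (c). To pin the latter down, I would use the classical map $B_3 \to \mathrm{SL}_2(\mathbb{Z})$ defined by $\sigma_1 \mapsto \bigl(\begin{smallmatrix}1 & 1\\ 0 & 1\end{smallmatrix}\bigr)$ and $\sigma_2 \mapsto \bigl(\begin{smallmatrix}1 & 0\\ -1 & 1\end{smallmatrix}\bigr)$, which factors through the isomorphism $\overline{B_3} \cong \mathrm{PSL}_2(\mathbb{Z})$. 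A direct computation shows $\sigma_2^{-1}\sigma_1^k$ has trace $1, 0, -1$ for $k = -1, -2, -3$, hence orders $3, 2, 3$ in $\mathrm{PSL}_2(\mathbb{Z})$, so it represents $\bar b, \bar a, \bar b^2$ respectively and exhausts the nontrivial periodic classes.

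The bulk of the remaining work is to show that every type-(ii) class lifts to case (a) or to the $k\neq 0$ part of case (b). The key identity $\sigma_2^{-1} = b^{-1}\sigma_1$ (combined with $\sigma_1 = b^{-1}a$) rewrites each factor $\sigma_2^{-a_i}\sigma_1$ as a specific cyclically reduced alternating word in $\bar a,\bar b$; stringing these together shows that $\sigma_2^{-a_k}\sigma_1\cdots\sigma_2^{-a_1}\sigma_1$ projects to an alternating word whose $\bar b$-exponent sequence is identically $2$ precisely when all $a_i = 0$. Conversely, given any cyclically reduced alternating representative in $\overline{B_3}$, I would cyclically rotate it to end in $\bar a$, regroup into such blocks, and absorb leftover powers of $b^3 = \Delta^2$ into a single central factor $\Delta^{2d}$; the degenerate case where all $\delta_i$ agree yields $\bar\sigma_1^{\pm k}$, hence case (b) with $k\neq 0$, and every other case yields case (a). The principal obstacle will be this final bookkeeping — ensuring the $a_i$ can always be chosen nonnegative, separating the degenerate from the generic case, and checking that two words in the same Murasugi normal form are conjugate in $B_3$ only via the expected cyclic permutation — all of which reduces to careful manipulation with the braid relation $\sigma_1\sigma_2\sigma_1 = \sigma_2\sigma_1\sigma_2$ and the centrality of $\Delta^2$.
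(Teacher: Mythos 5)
The paper never proves this statement---it is imported verbatim from Murasugi \cite{Murasugi74} and used as a black box---so there is no internal argument to measure yours against; I can only evaluate your route on its own terms. Your strategy (pass to $B_3/Z(B_3)\cong \mathrm{PSL}_2(\mathbb{Z})\cong(\mathbb{Z}/2)\ast(\mathbb{Z}/3)$, classify conjugacy classes there by free-product normal forms, lift back using centrality of the full twist, with the exponent-sum homomorphism showing the central factor is determined) is sound and is essentially the standard modern proof of Murasugi's classification. Since the theorem only asserts that every braid is conjugate to \emph{some} listed form, your final worry about uniqueness up to cyclic permutation is not even needed.

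Two concrete points. First, the identification of the finite-order classes is incomplete as argued: the traces $1,0,-1$ of $\sigma_2^{-1}\sigma_1^{k}$ for $k=-1,-2,-3$ give the orders $3,2,3$, but in $\mathrm{PSL}_2(\mathbb{Z})$ the trace is a conjugacy invariant only up to sign, so it cannot distinguish the two order-$3$ classes $\bar b$ and $\bar b^{2}$ (they are conjugate in $\mathrm{PGL}_2(\mathbb{Z})$ but not in $\mathrm{PSL}_2(\mathbb{Z})$). You must check both classes are actually realized, or else braids projecting to the missing class would escape family (c). The repair is immediate from the rewriting you already set up: modulo the center, $\bar\sigma_1=\bar b^{2}\bar a$ and $\bar\sigma_2^{-1}=\bar b\bar a$, so $\overline{\sigma_2^{-1}\sigma_1^{-1}}=\bar b^{2}$ while $\overline{\sigma_2^{-1}\sigma_1^{-3}}$ cyclically reduces to $\bar b$; together with the unique involution class this exhausts the elliptic classes. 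Second, the deferred ``rotate, regroup, absorb $b^{\pm3}$'' step really is the bulk of the proof, but it is routine and works exactly as you indicate: a cyclically reduced alternating word with exponent string in $\{1,2\}$ either has constant string (all $2$'s gives $\bar\sigma_1^{k}$, all $1$'s gives $\bar\sigma_1^{-k}$, i.e.\ family (b) with $k\neq0$) or contains both values, and reading each $2$ as a $\sigma_1$ and the run of $1$'s preceding it as $\sigma_2^{-a_i}$ yields family (a) with $a_i\geq0$ and some $a_i>0$. One last caution: you take $\Delta=\sigma_1\sigma_2\sigma_1$ (Murasugi's convention), whereas the statement in the paper sets $\Delta=(\sigma_1\sigma_2\sigma_1)^{2}$; with the paper's literal definition $\Delta^{2d}$ ranges only over even powers of the full twist and the classification as written would be false (the full twist itself would be conjugate to none of the forms), so your convention is the correct one and the paper's parenthetical is evidently a slip.
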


Kin and Rolfsen classified precisely which braids in the two families \ref{murreduce} and \ref{murperiodic} are order-preserving \cite[Proposition 4.5, Corollary 4.7, Theorem 4.10]{KinRolfsen18}.
The action of $\Delta^{2d}$ on the free group is an inner-automorphism. Therefore, $\Delta^{2d}\beta$ is order-preserving if and only if $\beta$ is order-preserving.
The classification of which 3-braids in family \ref{murpsuedoanosov} are order-preserving is an active open problem. The next two results make progress toward this classification problem. 

\begin{thm}
    \label{thm:EvenEvenBraidsAreOP}
    Let $\beta$ be a braid in $B_3$ that is conjugate to $\sigma_2^{-a_k}\sigma_1\dots\sigma_2^{-a_1}\sigma_1\Delta^{2d}$, where $a_i \geq 0$ with at least one $a_i \neq 0$. If both $k$ and $a_1+\dots+a_k$ are even, then $\beta$ is order-preserving. 
\end{thm}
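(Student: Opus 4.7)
The plan is to apply Theorem~\ref{thm:OPViaBurauRep}: I will show that all eigenvalues of $\rho(\beta)$ are positive in the unique ordering on $\mathbb{E}$. Set $X := \rho(\sigma_1) = \begin{pmatrix} -t & 1 \\ 0 & 1 \end{pmatrix}$ and $Y := \rho(\sigma_2^{-1}) = \begin{pmatrix} 1 & 0 \\ 1 & -t^{-1} \end{pmatrix}$. A direct computation gives $\rho(\Delta^{2d}) = t^{3d} I$, a positive scalar in $\mathbb{E}$, so it suffices to show that $M := \rho(\beta_0)$ has positive eigenvalues, where $\beta_0 := \sigma_2^{-a_k}\sigma_1 \cdots \sigma_2^{-a_1}\sigma_1$. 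Decompose $M = M_k M_{k-1} \cdots M_1$ with $M_i := Y^{a_i} X$. Since $Y^{a_i}$ has first row $(1, 0)$, each $M_i$ has the shape $\begin{pmatrix} -t & 1 \\ u_i & v_i \end{pmatrix}$ with $u_i, v_i \in \mathbb{Z}[t^{\pm 1}]$: for $a_i \geq 1$ the leading (lowest-exponent) terms are $v_i \sim (-1)^{a_i} t^{-a_i}$ and $u_i \sim (-1)^{a_i} t^{2 - a_i}$, while $v_i = 1$ and $u_i = 0$ when $a_i = 0$. Set $N := a_1 + \cdots + a_k$.

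The determinant is immediate: $\det M = (-t)^{k - N} = t^{k - N}$ using that $k$ and $N$ are both even, and so $\det M$ is positive in $\mathbb{E}$. For the trace I plan to expand each entry $M_{i,j}$ as a sum over index paths $(j_k, \ldots, j_0) \in \{1, 2\}^{k+1}$ with $j_k = i, j_0 = j$, each weighted by $\prod_\ell (M_\ell)_{j_\ell, j_{\ell - 1}}$. An exponent accounting---stays at $2$ contribute lowest exponent $-a_\ell$, stays at $1$ contribute $1$, transitions $2 \to 1$ contribute $0$, and transitions $1 \to 2$ contribute $2 - a_\ell$ (and vanish when $a_\ell = 0$)---shows that for $M_{2,2}$ the constant path $j_\ell \equiv 2$ uniquely attains the lowest exponent $-N$ with coefficient $(-1)^N = +1$, while any deviation raises the exponent by at least $2$. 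A parallel analysis shows that $M_{1,1}$ has lowest exponent at least $2 - N + a_k > -N$. Hence $\mathrm{tr}(M)$ has leading term $+t^{-N}$, positive in $\mathbb{E}$.

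With $\mathrm{tr}(M)$ in hand, $\mathrm{tr}(M)^2$ has leading term $t^{-2N}$ while $4 \det M = 4 t^{k - N}$ has exponent $k - N > -2N$, so the discriminant $\mathrm{tr}(M)^2 - 4 \det M$ has leading term $t^{-2N}$ and is positive. Because $\mathbb{E}$ is real closed, the two eigenvalues of $M$ then lie in $\mathbb{E}$, and the positivity of trace and determinant forces both eigenvalues to be positive. Multiplying by the positive scalar $t^{3d}$ preserves positivity, so Theorem~\ref{thm:OPViaBurauRep} delivers the conclusion that $\beta$ is order-preserving.

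The main obstacle is the exponent accounting in the second step, particularly in the presence of positions where $a_i = 0$: here $u_i = 0$ forbids transitions into index $2$ at position $i$, and $v_i = 1$ alters the local leading behaviour, so one must verify with care that the constant path $j \equiv 2$ supplies the unique lowest-exponent contribution to $M_{2,2}$ without cancellation, and that $M_{1,1}$ indeed has strictly higher lowest exponent. Once this path-by-path bookkeeping is complete, the positivity of the determinant, trace, and discriminant all follow cleanly from the parity hypothesis, and the theorem drops out of Theorem~\ref{thm:OPViaBurauRep}.
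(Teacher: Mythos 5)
Your proposal is correct and follows the same overall strategy as the paper: show that both Burau eigenvalues of $\beta$ are positive in $(\mathbb{E},Q)$ by controlling the leading terms of the trace and determinant, use positivity of the discriminant together with real closedness of $\mathbb{E}$ (\cref{thm:ExistenceOfRealClosureAndProperties}) to conclude the eigenvalues lie in $\mathbb{E}$ and are positive, and then invoke \cref{thm:OPViaBurauRep}. The only genuine difference is the bookkeeping used to get the leading-term facts: the paper computes $\rho(\sigma_2^{-a}\sigma_1)$ explicitly (\cref{lem:BaseCase}) and then runs an induction on $k$ tracking $\deg_{\min}$ and $\mathfrak{c}$ of all four matrix entries (\cref{prop:PositiveDiscriminant}, \cref{cor:EigenvaluesOfpABraids}), whereas you expand $M_k\cdots M_1$ as a sum over index paths and do a valuation-excess count: every excursion from index $2$ to index $1$ raises the exponent by at least $2$ (and a $1\to2$ transition at a position with $a_\ell=0$ kills the path), so the constant path at index $2$ is the unique contributor to the minimal exponent $-N$ of $M_{2,2}$ with coefficient $(-1)^N$, while $M_{1,1}$ has strictly larger $\deg_{\min}$; this accounting is valid, including at positions with $a_i=0$, and yields the same conclusions $\mathrm{tr}(M)\sim(-1)^N t^{-N}$ and $\det(M)=(-t)^{k-N}$ in a single non-inductive pass. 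Two conventions are off but immaterial: your generator matrices are the transposes of those in \cref{eq:ReducedBurauRep} and your product corresponds to a cyclic rotation of the word, which changes nothing since trace, determinant and eigenvalues are invariant under transposition and conjugation; and with the paper's convention $\Delta=(\sigma_1\sigma_2\sigma_1)^2$ one gets $\rho(\Delta^{2d})=t^{6d}I$ rather than $t^{3d}I$, which is harmless since either way it is a positive scalar.
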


\begin{cor}
    \label{cor:SquareOfThreeBraidsAreOP}
    Let $\beta \in B_3$ be any non-periodic 3-braid. Then $\beta^2$ is an order-preserving braid. 
\end{cor}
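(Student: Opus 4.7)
The plan is to apply Murasugi's classification (Theorem~\ref{thm:MurasugiClassification}) to reduce to two cases, and then invoke Theorem~\ref{thm:EvenEvenBraidsAreOP} in one and Theorem~\ref{thm:OPViaBurauRep} in the other. Since the family~(c) braids are exactly the periodic 3-braids and since being order-preserving is invariant under conjugation, the non-periodicity hypothesis allows us to assume, up to conjugation, that $\beta$ lies in family~(a) or family~(b).

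In the family~(a) case, write $\beta = w\,\Delta^{2d}$ with $w = \sigma_2^{-a_k}\sigma_1 \cdots \sigma_2^{-a_1}\sigma_1$. Because $\Delta^{2}$ is central, $\beta^{2} = w^{2}\,\Delta^{4d}$. The crucial observation is that $w$ begins with $\sigma_2^{-a_k}$ and ends with $\sigma_1$, so concatenating two copies of $w$ produces a word of exactly the family~(a) shape $\sigma_2^{-b_{2k}} \sigma_1 \cdots \sigma_2^{-b_1} \sigma_1$ with $b_i \ge 0$ (at least one nonzero), whose block count is $2k$ and whose exponent-sum is $2(a_1 + \cdots + a_k)$. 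Both are even, so Theorem~\ref{thm:EvenEvenBraidsAreOP} applies and $\beta^2$ is order-preserving.

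In the family~(b) case, $\beta = \sigma_1^{k}\Delta^{2d}$ and $\beta^{2} = \sigma_1^{2k}\Delta^{4d}$. Since inner automorphisms preserve every bi-order, it suffices to check $\sigma_1^{2k}$. The reduced Burau matrix $\rho(\sigma_1)$ is upper triangular with diagonal entries $-t$ and $1$, so $\rho(\sigma_1^{2k})$ has eigenvalues $t^{2k}$ and $1$, both positive in $\mathbb{E}$; Theorem~\ref{thm:OPViaBurauRep} then finishes this case. As an alternative, one may invoke the family-(b) portion of Kin--Rolfsen's classification \cite[Proposition 4.5]{KinRolfsen18} directly.

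I expect the only real obstacle to be bookkeeping: confirming that family~(c) corresponds precisely to the periodic part of the Nielsen--Thurston trichotomy (so the non-periodic hypothesis genuinely excludes it), and tracking carefully that the junction $\sigma_1\sigma_2^{-a_k}$ produced by concatenating the two copies of $w$ does not break the family~(a) normal form required by Theorem~\ref{thm:EvenEvenBraidsAreOP}. Both are straightforward once one unpacks the notation.
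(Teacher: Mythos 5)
Your proposal is correct and follows essentially the same route as the paper: reduce via Murasugi's classification to families (a) and (b), and in family (a) observe that $\beta^2$ has the normal form $\sigma_2^{-b_{2k}}\sigma_1\cdots\sigma_2^{-b_1}\sigma_1$ with even block count and even exponent sum, so \cref{thm:EvenEvenBraidsAreOP} (equivalently \cref{cor:EigenvaluesOfpABraids} plus \cref{thm:OPViaBurauRep}) applies. The only divergence is in family (b), where the paper simply notes that $\beta^2$ is then a pure braid and cites \cite[Prop 4.6]{KinRolfsen18}, while you compute the Burau eigenvalues $t^{2k}$ and $1$ of $\sigma_1^{2k}$ and invoke \cref{thm:OPViaBurauRep}; both arguments are valid.
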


Prior to these results, the only known order-preserving braids in family \ref{murpsuedoanosov} are the ones provided by Cai, Clay and Rolfsen \cite{CaiClayRolfsen}. The first author, Scherich, and Turner show that 3-strand braids of the form $\sigma_1\sigma_2^{2k+1}$ for an integer $k$ are not order-preserving \cite[Theorem 7]{JohnsonScherichTurner2024}. By \cref{cor:SquareOfThreeBraidsAreOP}, the braids $(\sigma_1\sigma_2^{2k+1})^2$ are order-preserving. These are the first known examples of order-preserving braids whose permutation type is a single cycle. In fact, \cref{cor:SquareOfThreeBraidsAreOP} implies that order-preserving braids whose permutation type is a single cycle are abundant in $B_3$. These examples provide an affirmative answer to a question of Kin and Rolfsen \cite[Question 6.1]{KinRolfsen18} about bi-orderability of a class of link groups and yield a conjectural picture about the property of being order-preserving for 3-strand braids, see \cref{conj:ObstructionToOPBraidViaBurau}. 

\subsection{Organization of the article}

In \cref{sec:FiniteGroupOP}, we prove \cref{thm:FiniteOrderOP} and explain some of its corollaries. In \cref{sec:MotivatingExamples}, we explain an example to illustrate the ideas behind \cref{thm:OPViaBurauRep} and to motivate the discussion of some concepts in \cref{subsec:OrderedRingAndFields}, and \cref{subsec:puiseux}. In \cref{subsec:ProofMainTheorem}, we give a proof of the main theorem and apply it to study the order-preserving properties of braids in ]\cref{subsec:OP3Braid} and \cref{subsec:SporadicExamples}. We end with a conjecture and a question in \cref{subsec:ConjecturesQuestions}.

\subsection{Acknowledgement} The second author would like to thank Alan Reid for pointing out the result in \cite{Eckmann84} which extends \cref{thm:FiniteOrderOP} to the case of closed surface groups, George Domat for various conversations about finite subgroups of $\Out(F_n)$, Carl-Fredrik Nyberg Brodda for corrections on some references regarding Stallings' result. The authors would also like to thank the referee(s) for their careful readings. Their corrections, suggestions and feedback have improved the content and the exposition of our paper. The authors acknowledge the support of the thematic semester in Geometric Group Theory at CRM Montreal where the collaboration started.
The main idea of \cref{thm:OPViaBurauRep} was conceptualized at ICERM during the Braid Reunion workshop, which is supported by the National Science Foundation under Grant No. DMS-1929284.
Johnson acknowledges the support of the NSF grant DMS-2213213.
Le acknowledges the support of the AMS Simons Travel Grant and of Harry Baik and Korea Advanced Institute of Science and Technology (KAIST) during the preparation of the paper.  

\section{Classification of finite-order outer automorphisms of free and surface groups}
\label{sec:FiniteGroupOP}
In this section, we will give the proof of \cref{thm:FiniteOrderOP}. This proof is a combination of well-known results about virtually free groups in \cite{Stallings68} and virtually surface groups in \cite{Eckmann84}. 

\begin{proof}[Proof of \cref{thm:FiniteOrderOP}]
    Let $\Gamma := \pi^{-1}(H)$. We will first prove that if $\Gamma$ contains torsion then $H$ is not order-preserving. Let $f \in \Gamma$ be a non-trivial finite-order automorphism of $G$. Since $\pi(f)$ is a non-trivial element of $H$, it suffices to show that $\pi(f)$ is not order-preserving. Since $f$ is a non-trivial finite-order automorphism, there exists $w \in G$ such that the orbit of $w$ under $f$ is finite and contains at least 2 elements. It follows that there exists no linear order on the orbit of $w$ under $f$ that is $f$-invariant. Therefore, $\pi(f)$ and hence $H$ is not order-preserving. 
    
    Now we will show that if $\Gamma$ is torsion-free then $H$ is order-preserving. The key observation is that if $\Gamma$ is torsion-free then $\Gamma$ is isomorphic to either a free group or a bi-orderable surface group according to $G$ by the seminal work of Stallings \cite{Stallings68} and separately Eckmann, Linnell and Muller \cite{Eckmann84}. In particular, when $G$ is free, the group $\Gamma$ contains a free group as a finite index subgroup. If $\Gamma$ is also torsion-free, then the work of Stallings \cite{Stallings68} implies that $\Gamma$ must be a free group.
    
    When $G$ is a nonabelian bi-orderable closed surface group, the group $\Gamma$ contains a finite-index subgroup that is a nonabelian closed surface group. If $\Gamma$ is torsion-free, then $\Gamma$ is also a nonabelian surface group by the work of Eckmann, Linnell and Muller, see \cite[Corollary 1.2]{Eckmann84}. Since $G$ is neither $\mathbb{Z}/2\mathbb{Z}$ nor the Klein bottle group, $\Gamma$ is a biorderable closed surface group.
    
    In either case, we have the following short exact sequence
    \begin{equation}
    \label{eq:FiniteCyclicExtensionOfG}
        1 \to G \to \Gamma \to H \to 1
    \end{equation}
    where $\Gamma$ is a bi-orderable group. Let $f$ be any element of $\Gamma$, then we can view $f$ as an automorphism of $G$. The conjugation action of $f$ on $G$ in $\Gamma$ coincides with $f:G\to G$ as an automorphism of $G$. Consider $P$ the positive cone of any bi-order on $\Gamma$. We restricts $P$ to $G$ to obtain $P_G := P\cap G$ a bi-order on $G$. Since $P$ defines a bi-order on $\Gamma$, the set $P_G$ is invariant under conjugation by $f$. This implies that $f$ when viewed as an automorphism of $G$ preserves $P_G$. Therefore, $H$ is an order-preserving finite subgroup of $\Out(G)$. 
\end{proof}

\subsection{Properties of finite order-preserving subgroups}
As a consequence of the above theorem, we observe some necessary conditions for a finite subgroup of outer automorphisms of free and bi-orderable surface group to be order-preserving. 

\begin{cor}
    Let $G$ be a non-abelian finitely generated free group or a bi-orderable surface group and $\chi(G)$ be the Euler characteristic of a finite graph or of a closed surface with the fundamental group isomorphic to $G$. If $H \leq \Out(G)$ is an order-preserving finite subgroup, then $|H|$ divides $\chi(G)$. Furthermore, every divisor of $\chi(G)$ occurs as the order of an order-preserving finite subgroup of $\Out(G)$.
\end{cor} 

\begin{proof}
    We assume that $G$ is a finite-generated free group. The proof in the case of bi-orderable surface groups is identical. Recall that $\pi:\Aut(G) \to \Out(G)$ is the canonical surjective group homomorphism. We have the short exact sequence of groups
    \[
    1 \to G \to \Gamma \to H \to 1
    \]
    where $\Gamma := \pi^{-1}(H)$. The proof of \cref{thm:FiniteOrderOP} shows that if $H$ is order-preserving, then $\Gamma$ is a free group. 
    
    Let $X$ be a finite graph with the fundamental group isomorphic to $\Gamma$ and $Y$ be the cover corresponding to the subgroup $G \leq \Gamma$. The degree of the covering $Y \to X$ is $|H|$. Since $\chi(G)$ is the Euler characteristic of $Y$, we have $\chi(G)/|H|$ is the Euler characteristic of $X$ which is an integer since $X$ is a finite graph. Therefore, $|H|$ divides $\chi(G)$. 

    Now given a divisor $d$ of $\chi(G)$, we will construct an order-preserving finite subgroup of $\Out(G)$ of order $d$. Since $\chi(G)/d$ is an integer, there exists a finite graph $X$ with Euler characteristic $\chi(G)/d$. We consider a surjective group homomorphism $q:\pi_1(X) \to \mathbb{Z}/d\mathbb{Z}$. The kernel of this homomorphism is a subgroup of $\pi_1(X)$ isomorphic to $G$ since it has the same Euler characteristic as $G$. The action of $\mathbb{Z}/d\mathbb{Z}$ on $\ker(q) \cong G$ is conjugation by elements in $\pi_1(X)$. Since the centralizer of $\ker(q)$ in $\pi_1(X)$ is trivial, the action of $\mathbb{Z}/d\mathbb{Z}$ on $\ker(q) \cong G$ induces an embedding of $\mathbb{Z}/d\mathbb{Z}$ into $\Out(G)$. The proof of \cref{thm:FiniteOrderOP} shows that this finite subgroup is order-preserving.        
\end{proof}

\subsection{Connections with previous results}
We discuss how \cref{thm:FiniteOrderOP} is related to results in \cite{BoyerRolfsenWiest05} and generalizes results in \cite{KinRolfsen18}. 

Kin and Rolfsen \cite{KinRolfsen18} classified order-preserving braids whose Nielsen-Thurston type is periodic. See \cite[Theorem 4.10]{KinRolfsen18} and the previous paragraph for the statement of the theorem and the discussion of periodic braid. When the Nielsen-Thurston type of a braid is periodic, Kin and Rolfsen show that a braid is not order-preserving when the outer automorphism induced by the braid action on the free group, see \cref{eq:ArtinAction}, can be chosen to be an automorphism of the free group with finite order \cite[Equation 4.2]{KinRolfsen18}. This is precisely the same obstruction that we used in \cref{thm:FiniteOrderOP}.          

In \cite{BoyerRolfsenWiest05}, the authors classified among Seifert fibered spaces 3-manifolds with bi-orderable fundamental group, see \cite[Theorem 1.5]{BoyerRolfsenWiest05}. In \cref{thm:FiniteOrderOP}, if we assume that $H$ is a finite cyclic subgroup of $\Out(\pi_1(S))$ where $S$ is a closed orientable surface of genus $g \geq 2$, then \cref{thm:FiniteOrderOP} recovers a special case of \cite[Theorem 1.5]{BoyerRolfsenWiest05}. We explain the equivalence between our algebraic condition and the topological condition in \cite[Theorem 1.5]{BoyerRolfsenWiest05} in this case.

Recall that if $S$ is a closed orientable surface of genus $g \geq 2$, the extended mapping class group, $\MCG^{\pm}(S)$, is defined as the group of homeomorphisms of $S$ where two homeomorphisms are equivalent if they are isotopic. A mapping class element in $\MCG^{\pm}(S)$ induces a well-defined outer automorphism of $\pi_1(S)$. This correspondence gives a group homomorphism $\mathcal{I}:\MCG^{\pm}(S) \to \Out(\pi_1(S))$. By the Dehn-Nielsen-Baer Theorem, this group homomorphism is an isomorphism, see \cite[Chapter 8]{farbmargalit2012primer}
\begin{equation}
\label{eq:DehnNielsenBaerTheorem}
\mathcal{I}: \MCG^{\pm}(S) \to \Out(\pi_1(S)).
\end{equation}
For a finite cyclic subgroup $H \leq \Out(\pi_1(S))$, let $\varphi:S\to S$ be a homeomorphism representing a mapping class $[\varphi]$ such that $\mathcal{I}([\varphi])$ generates $H$. By the Nielsen realization theorem (see \cite{nielsen1943realization} and \cite[Theorem 5]{kerckhoff1983nielsenrealization}), we may assume that $\varphi:S\to S$ is a finite-order homeomorphism.  Also, see \cite[Chapter 7]{farbmargalit2012primer} for a historical discussion. The mapping torus $M_\varphi$ is defined as
\[
M_\varphi = \frac{S\times[0,1]}{(x,1) \sim (\varphi(x),0)}.
\]
The manifold $M_\varphi$ can be decomposed as a union of circles
\begin{equation}
\label{eq:SeifertDecomposition}
M_\varphi = \bigcup_{x \in S} \bigcup_{n=0}^{k} \{\varphi^n(x) \}\times[0,1]
\end{equation} 
where $k$ is the order of $\varphi$ in $\Homeo(S)$. This decomposition gives $M_\varphi$ a structure of a Seifert fibered space, see \cite[Section 3]{scott8geometries} for discussions on Seifert fibered space. The base space $B$ of the Seifert structure on $M_\varphi$ is obtained by identifying each circle in the decomposition in \cref{eq:SeifertDecomposition} to a point. In other words, the base space $B$ is the quotient of $S$ by the action of the finite-order homeomorphism $\varphi$. The space $B$ in general is a 2-orbifold. See \cite[Section 2]{scott8geometries} for a definition of an orbifold as well as covering space theory for orbifolds. The natural quotient map $S \to B$ is a regular orbifold covering map. The deck transformation group of $S\to B$ is generated by $\varphi$. Therefore, the (orbifold) fundamental group of $S$ and $B$ fit in the short exact sequence 
\begin{equation}
\label{eq:SESofBasespace}
1 \to \pi_1(S) \to \pi_1(B) \to \langle \varphi \rangle \to 1 
\end{equation}
where the action of $\varphi$ on $\pi_1(S)$ is the induced action $\mathcal{I}([\varphi]):\pi_1(S)\to\pi_1(S)$. Identifying $\langle \varphi \rangle$ and $H$ via $\mathcal{I}$ in \cref{eq:DehnNielsenBaerTheorem}, we can see that the fundamental group of the base space of the Seifert structure, $\pi_1(B)$, is isomorphic to $\Gamma = \pi^{-1}(H)$, the pre-image of $H$ in $\Aut(\pi_1(S))$. The condition that $\Gamma$ is torsion free in \cref{thm:FiniteOrderOP} happens precisely when the base space $B$ is an honest surface without any orbifold point. In other words, the Seifert fibered structure of $M_\varphi$ is locally trivial. Therefore, our condition is the algebraic analog of the condition that the total space of $M_\varphi$ is a locally trivial, orientable circle bundle over a surface with bi-orderable fundamental group \cite[Theorem 1.5]{BoyerRolfsenWiest05}.  

\subsection{A new construction of order-preserving outer automorphism}

As a final remark of this section, we indicate how to use \cref{thm:FiniteOrderOP} to produce new examples of infinite-order order-preserving outer automorphisms that do not necessarily satisfy the criterion of \cite[Theorem 2.6]{PerronRolfsen03}.

\begin{cor}\label{cor:OPOuterAutomorphismCommutingWithFiniteOrder}
    Let $G$ be a nonabelian free group or a bi-orderable surface group and let $H\leq\Out(G)$ be a finite order-preserving subgroup. Now let $\Gamma = \pi^{-1}(H)$ where $\pi:\Aut(G) \to \Out(G)$ is the canonical group homomorphism. Since the center of $G$ is trival, we identify $G$ with the subgroup of $\Aut(G)$ consisting of inner automorphisms. Suppose $f \in \Aut(\Gamma)$ is an order-preserving automorphism that also leaves $G$ invariant. Restricting $f$ to $G$, we obtain an induced outer automorphism $\bar{f} \in \Out(G)$. Then the subgroup $\langle H, \bar{f}\rangle$ is order-preserving. 
\end{cor}

\begin{proof}
    Since $H$ is order-preserving, \cref{thm:FiniteOrderOP} shows that $\Gamma = \pi^{-1}(H)$ is torsion-free and is a free group or a bi-orderable surface group containing $G$. The proof of \cref{thm:FiniteOrderOP} shows that $H$ preserves every bi-order on $G$ obtained by restricting any bi-order of $\Gamma$. By hypothesis, there exists a bi-order $P$ on $\Gamma$ that $f$ preserves. Therefore, the subgroup $\langle H, \bar{f}\rangle$ preserves the bi-order $P\cap G$ on $G$. 
\end{proof}

Here is a concrete example illustrating \cref{cor:OPOuterAutomorphismCommutingWithFiniteOrder}.

\begin{example}
    We consider the free group $\Gamma = \langle a,b \rangle$  and $f\in \Aut(\Gamma)$ defined by 
    \[f(a) = aba \quad f(b) =ba.\] 
    This is well-known to be the monodromy of the fundamental group of the figure-8 knot complement. The induced action of $f$ on $H_1(\Gamma;\mathbb{Z})$ is a matrix with two positive eigenvalues $\frac{3\pm \sqrt{5}}{2}$. By \cite[Theorem 2.6]{PerronRolfsen03}, the automorphism $f$ is order-preserving. Now consider a group homomorphism $\varphi:\Gamma \to \mathbb{Z}/3\mathbb{Z}$ given by $\varphi(a) = \varphi(b) = \bar{1}$ and let $G= \ker(\varphi)$. The subgroup $G$ has index 3 and is isomorphic to the free group of rank $4$. We can choose a free basis of $G$ by looking at the corresponding covering space of the rose graph with two petals. See \cref{fig:CoverWithFundamentalGroupG}.

    We choose a basepoint to be the topmost vertex in the graph. The thickened blue edges on the right and at the bottom denote a maximal tree of the graph. A free basis of $G$ is $\{w = bA, x = bbAB, y= bba,z=bbb\}$ where capital letters denote inverses. Let $\rho:G\to G$ be the action of $a$ on $G$ by conjugation which is given by
    
    \begin{align*}
    \rho(w) &= awA = Wxw, \quad\rho(x) = axA = WzYw\\
    \rho(y) &= ayA = Wz,
    \quad\rho(z) = azA = Wzw
    \end{align*}
    
    \begin{center}
    \begin{figure}[h!]
        \begin{tikzpicture}[decoration={markings, mark= at position 0.5 with {\arrow{stealth}}}]
            \draw[very thick, blue, postaction={decorate} ] (0,1) -- ({sqrt(3)/2},-1/2);
            \draw[very thick, blue, postaction={decorate}] ({sqrt(3)/2},-1/2) -- ({-sqrt(3)/2},-1/2);
            \draw[thick, blue, postaction={decorate}] ({-sqrt(3)/2},-1/2) -- (0,1);
            
            \draw[thick, red, postaction={decorate}] (0,1) arc[start angle=90, end angle=-30, radius=1];
            \draw[thick, red, postaction={decorate}] ({sqrt(3)/2},-1/2) arc[start angle=-30, end angle=-150, radius=1];
            \draw[thick, red, postaction={decorate}] ({-sqrt(3)/2},-1/2) arc[start angle=-150, end angle=-270, radius=1];
            
            \filldraw[black] (0,1) circle (2pt);
            \filldraw[black] ({sqrt(3)/2},-1/2) circle (2pt);
            \filldraw[black] (-{sqrt(3)/2},-1/2) circle (2pt);
            \node[] at (0, -0.7) {$b$};
            \node[] at ({-sqrt(3)/2*0.7},1/2*0.7) {$b$};
            \node[] at ({sqrt(3)/2*0.7},1/2*0.7) {$b$};
            \node[] at (0, -1.2) {$a$};
            \node[] at ({-sqrt(3)/2*1.2},1/2*1.2) {$a$};
            \node[] at ({sqrt(3)/2*1.2},1/2*1.2) {$a$};
        \end{tikzpicture}
        \caption{A finite graph with fundamental group isomorphic to $G$. The basepoint is chosen to be the topmost vertex of the graph. A maximal tree contains two thickened blue edges. The edges are labeled by its image under the covering map to the rose graph with two edges $a$ and $b$.}
        \label{fig:CoverWithFundamentalGroupG}
    \end{figure}    
    \end{center}
    \noindent

    \noindent
    The automorphism $f$ does not preserve $G$, but $f^2$ does. A direct computation shows that
    \begin{align*}
    f^2(w) &= Yw, \quad f^2(x) = Yx \\ 
    f^2(y) &= XyXzWXzWXzWyWy, \quad f^2(z) = XyXzWXzWyWy 
    \end{align*}
    As a consequence of \cref{cor:OPOuterAutomorphismCommutingWithFiniteOrder}, the automorphism $h = f^2\circ \rho$ is order-preserving. Using the image of the elements $\{w,x,y,z\}$ as a basis for $H_1(G;
    \mathbb{Z})$, $h_{\ab}:H_1(G;
    \mathbb{Z}) \to H_1(G;
    \mathbb{Z})$ is represented by the matrix
    \[
    \begin{pmatrix}
        0 & 1 & -4 & -3 \\
        1 & 1 & -3 & -3 \\
        -1 & 0 & 4 & 3 \\
        0 & -1 & 2 & 2 \\
    \end{pmatrix}
    \]
    which has $-1$ as an eigenvalue. Therefore, $h$ gives an example of an automorphism of a free group on 4 letters that induces a non-trivial outer automorphism, is order-preserving and does not satisfy the hypothesis of \cite[Theorem 2.6]{PerronRolfsen03}.
\end{example}

\section{Motivating examples}
\label{sec:MotivatingExamples}
We start with a simple example that illustrates the main ideas behind our strategy. The automorphism in the following example is known to be order-preserving by the work of Perron and Rolfsen in \cite{PerronRolfsen03}. We will give a different proof that it is order-preserving to motivate our strategy. 

The example comes from the Artin action of braid groups $B_n$ on $F$, the free group on $n$ generators. Recall the braid group $B_n$ is generated by $n-1$ generators $\sigma_i$ for $1\leq i \leq n-1$ subject to the following relations:
\begin{equation}
\label{eq:BraidRelations}
\begin{aligned}
    \sigma_i \sigma_j &= \sigma_j \sigma_i && \text{if } |i-j|>2, \\
    \sigma_i \sigma_{i+1} \sigma_i &= \sigma_{i+1}\sigma_i\sigma_{i+1} && \text{for } 1 \leq i \leq n-2. \\
\end{aligned}    
\end{equation}
The Artin action of the braid group $B_n$ on the free group $F$ of rank $n$, $\Theta:B_n \to \Aut(F)$, is given by the following formula
\begin{equation}
\label{eq:ArtinAction}
    \Theta(\sigma_i)(x_j)= 
    \begin{cases}
        x_i \ x_{i+1} \ x_i^{-1} & \text{if $j=i$} \\
        x_i & \text{if $j=i+1$} \\
        x_j & \text{if $|i-j| \geq 2$ } 
    \end{cases}
\end{equation}
where $\{x_j \mid 1 \leq j \leq n\}$ is a free basis of $F$.

\subsection{The braid $\sigma_1^2$ in $B_3$ is order-preserving.}
In this example, let $F$ be the free group of rank 3 and consider the automorphism $\Theta(\sigma_1^2) \in \Aut(F)$ where $\sigma_1 \in B_3$. For convenience, we let $x = x_1$, $y = x_2$, $z = x_3$ and $\varphi = \Theta(\sigma_1^2)$. In this notation, $\varphi$ is given by
\[
\varphi(x) = xy x y^{-1}x^{-1} \quad \varphi(y) = xyx^{-1} \quad \text{and} \quad \varphi(z)= z . 
\]
Since the induced action of $\varphi$ on $H_1(F;\mathbb{Z})$ is the identity, it follows that $\varphi$ is order-preserving by \cite[Theorem 2.6]{PerronRolfsen03}. We outline an alternative proof here.

We consider the short exact sequence of groups
\begin{equation}
    1 \to K \to F \xrightarrow{\mu} \mathbb{Z} \to 0
\end{equation}
given by $\mu(x) = \mu(y) = \mu(z)= 1$. Let $\tau \in \Aut(K)$ be an automorphism induced by the conjugation action of $x$ on $K$.

\begin{fact}
\label{fact:ConstructingBiOrdersOnFGFreeGroup}
    If $P \subset K$ defines a bi-order on $K$ and is $\varphi$ and $\tau$-invariant, then
    \begin{equation}
        \label{eq:OrderOnF2}
        P \cup \mu^{-1}(\{k \mid k >0\})
    \end{equation}
defines a bi-order on $F$ that is $\varphi$-invariant. 
\end{fact}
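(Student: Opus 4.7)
The plan is to verify directly that $Q := P \cup \mu^{-1}(\mathbb{Z}_{>0})$ satisfies the three positive-cone axioms on $F$, and then to check that $\varphi$ preserves $Q$ setwise. For the trichotomy axiom, I would argue as follows: given $f \in F \setminus \{1\}$, either $\mu(f) \neq 0$, in which case exactly one of $f, f^{-1}$ lies in $\mu^{-1}(\mathbb{Z}_{>0})$, or $\mu(f) = 0$, in which case $f \in K \setminus \{1\}$ and the positive-cone property of $P$ places exactly one of $f, f^{-1}$ in $P$. Disjointness of $Q$, $\{1\}$, $Q^{-1}$ is automatic from the sign of $\mu$ (which is $0$ on $P \cup P^{-1}$ and nonzero on $\mu^{-1}(\mathbb{Z}_{>0}) \cup \mu^{-1}(\mathbb{Z}_{>0})^{-1}$). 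For closure under multiplication, a case analysis on whether each factor lies in $P$ or in $\mu^{-1}(\mathbb{Z}_{>0})$ reduces everything to two cases: both factors in $P$, handled by $P \cdot P \subset P$ inside $K$, or at least one factor with strictly positive $\mu$-value and the other with nonnegative $\mu$-value, which forces $\mu$ of the product to be positive.

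The main work is the conjugation axiom. Given $g \in F$ and $q \in Q$, the case $q \in \mu^{-1}(\mathbb{Z}_{>0})$ is immediate since $\mu(gqg^{-1}) = \mu(q) > 0$. When $q \in P$, I would write $g = x^{n} k$ with $n := \mu(g)$ and $k := x^{-n}g \in K$, so that
\[
gqg^{-1} = x^{n}(kqk^{-1})x^{-n}.
\]
The inner conjugate $kqk^{-1}$ lies in $P$ by conjugation-invariance of $P$ inside $K$, and then iterating $\tau$-invariance of $P$ a total of $|n|$ times shows $x^{n}(kqk^{-1})x^{-n} \in P \subset Q$. This is the only step that genuinely uses $\tau$-invariance of $P$; the rest is bookkeeping.

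Finally, for $\varphi$-invariance of $Q$, I would check that $\mu \circ \varphi = \mu$ by evaluating on the three generators (each of $\varphi(x) = xyxy^{-1}x^{-1}$, $\varphi(y) = xyx^{-1}$, $\varphi(z) = z$ has exponent sum $1$). This ensures that $\varphi$ preserves both $K$ and $\mu^{-1}(\mathbb{Z}_{>0})$ setwise, and the hypothesis that $\varphi|_K$ fixes $P$ then yields $\varphi(Q) = Q$. I do not expect any real obstacle; conceptually, $Q$ is the lexicographic combination of the standard order on $\mathbb{Z}$ and the order on $K$ given by $P$, and the $\tau$-invariance hypothesis on $P$ is precisely what is needed so that this lexicographic combination remains conjugation-invariant under elements of $F$ that lie outside $K$.
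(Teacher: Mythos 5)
Your proposal is correct and follows essentially the same route as the paper: the paper's proof simply asserts that the lexicographic union $P \cup \mu^{-1}(\{k \mid k>0\})$ is a bi-order on $F$ and that $\mu\circ\varphi=\mu$ (so $\varphi$ preserves $K$ and the positive $\mu$-part) gives $\varphi$-invariance, while you carry out the cone-axiom verification in detail. The only point worth flagging is that in the conjugation step with $n<0$ you implicitly need $\tau^{-1}(P)\subseteq P$, which does follow automatically, since $\tau(P)\subseteq P$ together with $K=P\sqcup\{1\}\sqcup P^{-1}$ forces $\tau(P)=P$.
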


\begin{proof}
If $P$ defines a bi-order on $K$ that is preserved by $\tau$, then $P \cup \mu^{-1}(\{k \mid k >0\})$
defines a bi-order on $F$. Since $\mu\circ\varphi = \mu$, the kernel $K$ of $\mu$ is preserved by $\varphi$. Furthermore, if $P$ is also preserved by $\varphi|_K$, then the bi-order defined in \cref{eq:OrderOnF2} on $F$ is preserved by $\varphi$.
\end{proof}
Therefore, to show that $\varphi$ is order-preserving, we will construct a bi-order $P$ on $K$ that is invariant under $\varphi$ and $\tau$.
Using the lower central series of $K$, we can define an abundance of bi-orders on $K$. Let $K_1 = K$ and $K_m =[K_{m-1},K]$ be the $m^{th}$ term in the lower central series of $K$.

\begin{fact}
\label{fact:ConstructingStandardOrderOnFreeGroup}
    To specify a bi-order $P$ on $K$, it suffices to specify a left-order $P_m$ on each quotient $K_m/K_{m+1}$. 
\end{fact}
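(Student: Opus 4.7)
The plan is to use that $K$ is a free group (being the kernel of a homomorphism from a free group to $\mathbb{Z}$, by Nielsen--Schreier), and therefore is residually nilpotent with torsion-free abelian lower-central-series quotients. Concretely, $\bigcap_{m\geq 1} K_m = \{1\}$ and each $K_m/K_{m+1}$ is free abelian, so any left-order on $K_m/K_{m+1}$ is automatically a bi-invariant order and its positive cone $P_m$ is closed under addition and avoids $0$.

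Given such a choice of $P_m$ for every $m\geq 1$, I would build $P\subseteq K$ as follows. For each nontrivial $g\in K$, residual nilpotence gives a well-defined \emph{depth} $d(g) := \max\{m \geq 1 \mid g\in K_m\}$; let $\bar g \in K_{d(g)}/K_{d(g)+1}$ denote its image. Declare $P := \{g\in K\setminus\{1\} \mid \bar g \in P_{d(g)}\}$. Then the three positive-cone axioms need to be checked:
\begin{enumerate}
    \item \emph{Trichotomy:} For $g\neq 1$, $d(g)=d(g^{-1})$ and $\overline{g^{-1}} = -\bar g$, so by trichotomy of $P_{d(g)}$ exactly one of $g,g^{-1}$ belongs to $P$.
    \item \emph{Product:} Take $g,h\in P$. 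If $d(g)\neq d(h)$, say $d(g)<d(h)$, then $gh\in K_{d(g)}\setminus K_{d(g)+1}$ with $\overline{gh}=\bar g\in P_{d(g)}$, so $gh\in P$. If $d(g)=d(h)=m$, then $\overline{gh}=\bar g+\bar h$ in the abelian group $K_m/K_{m+1}$; since $P_m$ is closed under addition and $0\notin P_m$, the sum is a nonzero element of $P_m$, whence $d(gh)=m$ and $gh\in P$.
    \item \emph{Conjugation invariance:} For $g\in P$ and $h\in K$, commutator identities give $hgh^{-1}g^{-1} = [h,g]\in [K,K_{d(g)}]\subseteq K_{d(g)+1}$, so $hgh^{-1}\in K_{d(g)}$ with the same image $\bar g$ in $K_{d(g)}/K_{d(g)+1}$, hence $hgh^{-1}\in P$.
\end{enumerate}

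The main subtlety, and the only step that is not purely formal, is the $d(g)=d(h)$ case of the product axiom: one must rule out that $gh$ falls into a deeper term of the lower central series. This is precisely where torsion-freeness of $K_m/K_{m+1}$ is used, since it guarantees $\bar g + \bar h \neq 0$ for $\bar g, \bar h \in P_m$. Everything else reduces to the standard identities $[K_m,K]\subseteq K_{m+1}$ and the definition of depth. The residual nilpotence of $K$ (ensuring $d$ is always finite) and the torsion-freeness of the quotients are the two properties of free groups being invoked, and both are classical facts I would cite rather than reprove.
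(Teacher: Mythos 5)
Your construction is exactly the one the paper uses: declare $g$ positive when its image at its depth in the lower central series lies in $P_{d(g)}$, relying on cofinality $\bigcap_m K_m=\{1\}$, abelianness of the quotients, and $[K,K_m]\subseteq K_{m+1}$, and your verification of the three positive-cone axioms is correct. One small remark: the non-vanishing of $\bar g+\bar h$ in the equal-depth case follows from $P_m$ being a positive cone (closed under addition and disjoint from $\{0\}$), as you in fact argue, rather than from torsion-freeness of $K_m/K_{m+1}$, which is not needed once the left-orders $P_m$ are given as hypotheses.
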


\begin{proof}
The key properties of the lower central series of $K$, a free group (finitely generated or not) are:
\begin{itemize}
    \item The series is cofinal $\displaystyle\bigcap_{m=1}^\infty K_m = \{1\}$.
    \item The successive quotient $K_m/K_{m+1}$ is abelian for all $m \geq 1$.
    \item The induced action of $K$ on $K_m/K_{m+1}$ by conjugation is trivial. 
\end{itemize}
Let $P_m$ be any left-order on $K_m/K_{m+1}$, and hence bi-order since $K_m/K_{m+1}$ is abelian. A bi-order $P$ on $K$ is constructed from the lower central series as follows. Since the lower central series of $K$ is cofinal, for any $\gamma \in K$ there exists a smallest $m$ such that the image of $\gamma$ in $K_{m}/K_{m+1}$ is non-trivial. We say that $\gamma$ is in $P$ if and only if the image of $\gamma$ in $K_{m}/K_{m+1}$ is in $P_m$. Since the lower central series is cofinal, every non-trivial element of $K$ is either in $P$ or not in $P$. The set $P$ forms a semigroup in $K$ since $P_m$ are all semigroups in the corresponding quotients. The fact that the induced action of $K$ on $K_m/K_{m+1}$ by conjugation is trivial implies that $P$ defines a bi-order on $K$.
\end{proof}

Since $K_m$ is a characteristic subgroup of $K$, the subgroup $K_m$ is preserved by any automorphism $\psi$ of $K$. The automorphism $\psi$ induces an automorphism $\psi_m$ on the quotient $K_m/K_{m+1}$. It follows directly from the construction above that if $P_m$  is invariant under $\psi_m$ for all $m \geq 1$, then the bi-order $P$ is also $\psi$-invariant. Therefore, to find a bi-order $P$ on $K$ that is $\varphi$- and $\tau$- invariant, we want to find a family of left-orders $\{P_m\}_{m \geq 1}$ such that each $P_m$ is $\varphi_m$ and $\tau_m$ -invariant for all $m$.

\begin{fact}
\label{fact:LowerCentralQuotientsEmbedInTensorPowers} For any positive integer $m$, the lower central series quotient $K_m/K_{m+1}$ embeds in $H_1(K;\mathbb{Z})^{\otimes_{\mathbb{Z}} m}$. We refer to the embedding as $\iota_m$.  Furthermore for any $\psi\in \Aut(K)$, the induced automorphism $\psi_m$ on $K_m/K_{m+1}$ agrees with the restriction of the automorphism $\psi_{\ab}^{\otimes_\mathbb{Z}m}$ on $H_1(K;\mathbb{Z})^{\otimes_{\mathbb{Z}} m}$ to the image of $\iota_m$.     
\end{fact}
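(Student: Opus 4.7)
The plan is to invoke the classical Magnus--Witt theorem on the associated graded Lie algebra of the lower central series of a free group. The preliminary observation is that $K$, being a subgroup of the free group $F$, is itself free by the Nielsen--Schreier theorem; it may have infinite rank, but this is harmless since everything below holds for free groups of arbitrary rank.

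First I would recall the Magnus--Witt theorem: for a free group $K$, the associated graded object $\bigoplus_{m \geq 1} K_m/K_{m+1}$, equipped with the bracket induced by the group commutator, is naturally isomorphic as a graded Lie algebra over $\mathbb{Z}$ to the free Lie algebra $L(V)$ on $V := K_{\ab} = H_1(K;\mathbb{Z})$. Under this identification, $K_m/K_{m+1}$ corresponds to the degree-$m$ component $L^m(V)$. Next I would embed $L(V)$ into the tensor algebra $T(V) = \bigoplus_{m\geq 0} V^{\otimes m}$ via the canonical Lie algebra map determined by the inclusion $V \hookrightarrow T(V)$, where $T(V)$ is regarded as a Lie algebra under $[a,b] := a \otimes b - b \otimes a$. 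That this map is injective when $V$ is a free $\mathbb{Z}$-module (which holds since $K$ is free, so $V$ is a free abelian group) is a standard consequence of the Poincar\'e--Birkhoff--Witt theorem. Restricting to degree $m$ produces the desired embedding $\iota_m \colon K_m/K_{m+1} \hookrightarrow V^{\otimes m} = H_1(K;\mathbb{Z})^{\otimes m}$.

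The naturality assertion then follows from functoriality at every stage. Any $\psi \in \Aut(K)$ preserves each $K_m$ and so descends to $\psi_m$ on $K_m/K_{m+1}$. The Magnus--Witt isomorphism is natural in $K$, so $\psi_m$ corresponds to the degree-$m$ part of the Lie algebra automorphism of $L(V)$ induced by $\psi_{\ab}$; and the embedding $L(V) \hookrightarrow T(V)$ is natural in $V$, so this in turn corresponds to the restriction of $\psi_{\ab}^{\otimes_\mathbb{Z} m}$ to the image of $\iota_m$. The identical argument applies to $\tau$, yielding $\iota_m \circ \psi_m = \psi_{\ab}^{\otimes_\mathbb{Z} m} \circ \iota_m$ and the analogous identity for $\tau$.

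Since this is essentially an assembly of well-known facts, there is no serious obstacle. The only small points requiring care are the integrality of the coefficient ring (the Magnus--Witt theorem and the embedding $L(V) \hookrightarrow T(V)$ are sometimes stated in characteristic zero, but both hold over $\mathbb{Z}$ when $V$ is $\mathbb{Z}$-free) and the absence of a finite generation hypothesis on $K$; both are standard and can be referenced from \emph{Magnus--Karrass--Solitar} or Serre's \emph{Lie Algebras and Lie Groups}.
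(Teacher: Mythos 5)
Your argument is correct, but it takes a different route from the one the paper relies on. The paper does not prove this fact directly: it points to \cite[Lemma 3.1]{CaiClayRolfsen} and \cite{PerronRolfsen03}, where the embedding is obtained from the Magnus embedding via the group ring, i.e.\ the map $g \mapsto g-1$ sending $K_m/K_{m+1}$ into $I^m/I^{m+1}$ for the augmentation ideal $I \subset \mathbb{Z}K$, followed by the identification $I^m/I^{m+1} \cong H_1(K;\mathbb{Z})^{\otimes_{\mathbb{Z}} m}$ valid for free groups; this intermediate quotient $I^m/I^{m+1}$ appears explicitly in the commutative diagram of \cref{lem:EmbeddingTheLowerCentralQuotients}, which is the generalization the paper actually uses. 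You instead compose the Magnus--Witt isomorphism of the associated graded Lie ring with the free Lie ring $L(V)$ on $V = H_1(K;\mathbb{Z})$ and the PBW-type embedding $L(V) \hookrightarrow T(V)$, with naturality giving the equivariance statement for $\psi_m$ and $\tau_m$. This is a legitimate and classical alternative; the points you flag (integral coefficients, infinite rank of $K$) are exactly the ones needing care, and they go through because $V$ is free abelian and the free Lie ring on a free $\mathbb{Z}$-module is itself $\mathbb{Z}$-free, so the embedding into the tensor algebra is injective over $\mathbb{Z}$. What the paper's route buys is a formulation that interfaces directly with the group-ring filtration and the subsequent extension of scalars to $\mathbb{F}_m$ in \cref{lem:EmbeddingTheLowerCentralQuotients}; what your route buys is the finer structural statement that the image of $\iota_m$ is precisely the degree-$m$ component of the free Lie ring inside $V^{\otimes m}$, at the cost of invoking PBW over $\mathbb{Z}$ rather than elementary group-ring computations.
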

\noindent
For a proof of \cref{fact:LowerCentralQuotientsEmbedInTensorPowers} see \cite[Lemma 3.1]{CaiClayRolfsen} and \cite{PerronRolfsen03}. The appropriate generalization of this fact is \cref{lem:EmbeddingTheLowerCentralQuotients}.

We describe explicitly the action of $\tau_{\ab}^{\otimes_\mathbb{Z}m}$ and $\varphi_{\ab}^{\otimes_\mathbb{Z}m}$ on the tensor power $H_1(K;\mathbb{Z})^{\otimes_{\mathbb{Z}} m}$. The automorphism $\tau \in \Aut(K)$ is given by the conjugation action of $x$ on $K$. More explicitly, we have 
\[\tau(w) = xwx^{-1}\] 
for all $w \in K.$ Let $u_i = [x^{i+1}y^{-1}x^{-i}]$ and $v_i = [x^{i}yz^{-1}x^{-i}]$ be the image of the corresponding elements in $H_1(K;\mathbb{Z})$. The homology group $H_1(K;\mathbb{Z})$ is an infinitely-generated free abelian group with a basis $\{u_i,v_i\mid i \in \mathbb{Z}\}$. If we write $t^i \ w =\tau^i_{\ab}(w)$ for any $w \in H_1(K;\mathbb{Z})$ and $i \in \mathbb{Z}$, then $u_i = t^i \  u_0 $ and $v_i = t^i \ v_0$ for any integers $ i\in \mathbb{Z}$. For an arbitrary element of $H_1(K;\mathbb{Z})$, we have
\begin{equation}
    \sum_{i\in\mathbb{Z}} (c_i u_i + d_i v_i)  = \left(\sum_{i\in\mathbb{Z}}c_i t^i\right) \ u + \left(\sum_{i\in\mathbb{Z}}d_i t^i\right) \ v
\end{equation}
where the coefficients $c_i$ and $d_i$ are integers such that all but finitely many are zero. The homology group $H_1(K;\mathbb{Z})$ becomes a finitely generated free $\mathbb{Z}[t^{\pm 1}]$-module with a free basis $\{u  = u_0,v = v_0\}$. The action of $\tau_{\ab}$ on $H_1(K;\mathbb{Z})$ is given by  
\begin{equation}
    \tau_{\ab}(u_i) = u_{i+1} \quad \text{and} \quad \tau_{\ab}(v_i) = v_{i+1}
\end{equation}
on the basis and is extended $\mathbb{Z}$-linearly on $H_1(K;\mathbb{Z})$. Therefore, the action of $\tau_{\ab}$ on the $\mathbb{Z}[t^{\pm 1}]$-module $H_1(K;\mathbb{Z})$ is multiplication by $t$. Since $\mu =\mu\circ \varphi$, we can write $\varphi(x) = xk$ for some $k \in K$. For the image $[w] \in H_1(K;\mathbb{Z})$ of any $w \in K$, we have
\begin{equation}
\label{eq:CheckingVarphiAndTauCommute}
\varphi_{\ab}\tau_{\ab}([w]) = \varphi_{\ab}([xwx^{-1}]) = [xk\varphi(w)k^{-1}x^{-1}] =  \tau_{\ab}([\varphi(w)])=\tau_{\ab}\varphi_{\ab}([w])  
\end{equation}
That is, the action of $\varphi_{\ab}$ and $\tau_{\ab}$ commute. We can view $\varphi_{\ab}$ as a $\mathbb{Z}[t^{\pm 1}]$-module automorphism. A direct computation shows that
\[ 
\varphi_{\ab}(u) = t^2 \ u \text{ and }\varphi_{\ab}(v) = (-t+1) \ u + v .
\]
Writing $u$ and $v$ as basis column vectors of the $\mathbb{Z}[t^{\pm 1}]$-module $H_1(K;\mathbb{Z})$, we can represent $\varphi_{\ab}$ by the matrix
\[
\begin{pmatrix}
    t^2 & -t + 1 \\ 0 & 1 
\end{pmatrix}.
\]

The tensor power of $H_1(K;\mathbb{Z})$ has the following natural module structure.
\begin{fact}
\label{fact:BasisOfTensorPowerOfHomology}
The tensor power $H_1(K;\mathbb{Z})^{\otimes_{\mathbb{Z}}m}$ has a structure of a finitely-generated free $\mathbb{Z}[t^{\pm 1}]^{\otimes_\mathbb{Z} m}$-module. On simple tensors, the ring action is defined for all $r_i \in \mathbb{Z}[t^{\pm 1}]$ and $w_i \in H_1(K;\mathbb{Z})$ by the formula
\[
(r_1 \otimes \dots \otimes r_m)(w_1 \otimes \dots \otimes w_m) = (r_1 w_1 \otimes \dots \otimes r_m w_m)
\]
and is extended linearly to the whole ring. A basis for this module can be chosen to be
\begin{equation}
    \mathcal{B}_m = \{w_{1}\otimes w_{2} \otimes \dots \otimes w_{m} \mid w_{i} \in \{u,v\}\}.
\end{equation}    
\end{fact}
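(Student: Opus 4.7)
The plan is to deduce this from the fact that $H_1(K;\mathbb{Z})$ is a free $\mathbb{Z}[t^{\pm 1}]$-module of rank $2$ on the basis $\{u,v\}$, together with the standard behavior of tensor products under direct sums. I would proceed in two stages: first establish that the claimed scalar multiplication defines a well-formed $\mathbb{Z}[t^{\pm 1}]^{\otimes_\mathbb{Z} m}$-module structure, and then exhibit a $\mathbb{Z}[t^{\pm 1}]^{\otimes_\mathbb{Z} m}$-linear isomorphism that identifies $\mathcal{B}_m$ with the standard basis of a free module of rank $2^m$.

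For the first stage, I would observe that the assignment
\[
(r_1,\ldots,r_m, w_1,\ldots,w_m) \longmapsto (r_1 w_1) \otimes \cdots \otimes (r_m w_m)
\]
is $\mathbb{Z}$-multilinear in its $2m$ arguments because each $\mathbb{Z}[t^{\pm 1}]$-module action on $H_1(K;\mathbb{Z})$ is $\mathbb{Z}$-bilinear and the tensor product is multilinear. By the universal property of tensor product, this map factors through $\mathbb{Z}[t^{\pm 1}]^{\otimes_\mathbb{Z} m}\otimes_\mathbb{Z} H_1(K;\mathbb{Z})^{\otimes_\mathbb{Z} m}$, and a routine check on simple tensors shows that the resulting pairing satisfies the module axioms with respect to the componentwise ring structure on $\mathbb{Z}[t^{\pm 1}]^{\otimes_\mathbb{Z} m}$.

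For the second stage, I would use the decomposition $H_1(K;\mathbb{Z}) = \mathbb{Z}[t^{\pm 1}] u \oplus \mathbb{Z}[t^{\pm 1}] v$ and the distributivity of $\otimes_\mathbb{Z}$ over direct sums, applied $m$ times, to obtain a canonical isomorphism of abelian groups
\[
H_1(K;\mathbb{Z})^{\otimes_\mathbb{Z} m} \;\cong\; \bigoplus_{(w_1,\ldots,w_m)\in\{u,v\}^m} \bigl(\mathbb{Z}[t^{\pm 1}] w_1\bigr)\otimes_\mathbb{Z}\cdots\otimes_\mathbb{Z}\bigl(\mathbb{Z}[t^{\pm 1}] w_m\bigr).
\]
Each summand is canonically isomorphic, as a module over $\mathbb{Z}[t^{\pm 1}]^{\otimes_\mathbb{Z} m}$, to $\mathbb{Z}[t^{\pm 1}]^{\otimes_\mathbb{Z} m}$ itself via the map sending the generator $w_1\otimes\cdots\otimes w_m$ to $1\otimes\cdots\otimes 1$. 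Summing these isomorphisms identifies $H_1(K;\mathbb{Z})^{\otimes_\mathbb{Z} m}$ with $\bigl(\mathbb{Z}[t^{\pm 1}]^{\otimes_\mathbb{Z} m}\bigr)^{2^m}$, which immediately yields that $\mathcal{B}_m$ is a free basis.

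The only delicate point is bookkeeping: verifying that the direct-sum decomposition respects not merely the underlying abelian group structure but also the module structure defined in the first stage, so that one may simultaneously read off generation and linear independence. This is a matter of chasing simple tensors through the canonical isomorphisms, and no substantive calculation is required beyond the universal properties of $\otimes_\mathbb{Z}$ and $\oplus$.
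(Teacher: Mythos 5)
Your argument is correct. Its first stage coincides with what the paper itself does: the Fact defers the module structure to the discussion following \cref{prop:HomologyModuleBurau}, which verifies exactly the multilinearity and module axioms you describe. For the basis claim, however, you take a genuinely different route from the paper's \cref{prop:TensorPowerOfHomologyModuleIsFree}: there, spanning is proved by expanding simple tensors factor by factor in the basis, and linear independence by constructing explicit coordinate functionals $\overline{\phi}_{i_1,\dots,i_m}$, i.e.\ module maps to the coefficient ring obtained from the dual basis of $H_1(K;\mathbb{Z})$ via the universal property. You instead use $H_1(K;\mathbb{Z})=\mathbb{Z}[t^{\pm1}]u\oplus\mathbb{Z}[t^{\pm1}]v$ and distributivity of $\otimes_\mathbb{Z}$ over $\oplus$ to split the $m$-fold tensor power into $2^m$ summands, each stable under the componentwise ring action and free of rank one on the corresponding $w_1\otimes\cdots\otimes w_m$ (using that $\mathbb{Z}[t^{\pm1}]w_i\cong\mathbb{Z}[t^{\pm1}]$ because $u,v$ form a free $\mathbb{Z}[t^{\pm1}]$-basis). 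The two proofs carry the same content---your projections onto the summands, composed with the rank-one identification, are precisely the paper's coordinate functionals---but your packaging yields generation and independence simultaneously from universal properties, at the cost of the compatibility check you rightly flag (that the decomposition respects the $\mathbb{Z}[t^{\pm1}]^{\otimes_\mathbb{Z} m}$-module structure and not merely the underlying abelian group), whereas the paper's hands-on functionals keep the independence argument self-contained and are written so as to transfer verbatim to the general rank-$(n-1)$ setting over $\mathbb{R}[t^{\pm1}]$ needed later.
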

\noindent In the general setting, we describe the module structure after \cref{prop:HomologyModuleBurau} and proved the claim about the basis of the module in \cref{prop:TensorPowerOfHomologyModuleIsFree}. The induced action of $\tau_{\ab}$ on $H_1(K;\mathbb{Z})^{\otimes_{\mathbb{Z}}m}$ is multiplication by $t^{\otimes_{\mathbb{Z}}m}$. We order the elements in $\mathcal{B}_m$ by the following rule
\begin{equation}
\label{eq:OrderOnTheBasis}
w_1 \otimes \dots \otimes w_m \prec w'_1 \otimes \dots \otimes w'_m
\end{equation}
if and only if $w_i = u$ and $w_i' = v$ and $w_j = w_j'$ for all $1 \leq j < i$. The induced action of $\varphi_{\ab}$ can be represented by an upper-triangular matrix with respect to the ordered basis $\mathcal{B}_m$. For instance, when $m = 2$ we have an ordered basis $u \otimes u \prec u \otimes v \prec v \otimes u \prec v \otimes v$, and the matrix representing 
$\varphi_{\ab}^{\otimes_{\mathbb{Z}} 2}$ is given by
\[
\begin{pmatrix}
    t^2 \otimes t^2 & t^2 \otimes (-t+1) & (-t+1) \otimes t^2 & (-t+1) \otimes (-t+1) \\ 
    0 & t^2 \otimes 1 & 0 & (-t+1) \otimes 1 \\
    0 & 0 & 1 \otimes t^2 & 1 \otimes (-t+1) \\
    0 & 0 & 0 & 1 \otimes 1 \\
\end{pmatrix}. 
\]
It is clear from this matrix representation that the eigenvalues of $\varphi_{\ab}^{\otimes_{\mathbb{Z}} m}$ are precisely
\[
\{\lambda_1 \otimes \dots \otimes \lambda_m \mid \lambda_i \in \{1,t^2\}\}.
\]

The final fact that we need to describe the order $P_m$ on $H_1(K;\mathbb{Z})^{\otimes_\mathbb{Z} m}$ is the following.  
\begin{fact}
\label{fact:OrderingOnTensorProductOfLaurentRing}
    The ring $\mathbb{Z}[t^{\pm 1}]^{\otimes_\mathbb{Z} m}$ admits an ordering defined by a subset $Q_m$ such that $(\mathbb{Z}[t^{\pm 1}]^{\otimes_\mathbb{Z} m},Q_m)$ is an ordered (additive) abelian group and that $Q_m \cdot Q_m \subset Q_m$ and $q_1 \otimes \dots \otimes q_m \in Q_m$ for any $q_1,\dots,q_m \in Q_1$ . 
\end{fact}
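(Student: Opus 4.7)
The plan is to identify $\mathbb{Z}[t^{\pm 1}]^{\otimes_\mathbb{Z} m}$ with the multivariate Laurent polynomial ring $R_m := \mathbb{Z}[t_1^{\pm 1}, \ldots, t_m^{\pm 1}]$ via the ring isomorphism sending the generator $t$ of the $i$-th tensor factor to $t_i$; under this identification, a simple tensor $p_1 \otimes \cdots \otimes p_m$ corresponds to the product $p_1(t_1) p_2(t_2) \cdots p_m(t_m)$. This transports the problem to a more familiar object on which one can define a positive cone by means of a monomial order.

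Concretely, I would equip $R_m$ with the lexicographic order on exponent tuples $(a_1, \ldots, a_m) \in \mathbb{Z}^m$, chosen so that its restriction to the first variable $\mathbb{Z}[t_1^{\pm 1}]$ recovers the order defining $Q_1$ on $\mathbb{Z}[t^{\pm 1}]$. Every non-zero Laurent polynomial has finite support, so it has a well-defined leading monomial under this order. I would then define $Q_m$ to be the set of non-zero elements of $R_m$ whose leading coefficient is a positive integer.

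The ordered-group axioms for $(R_m, Q_m)$ reduce to three routine checks. Trichotomy holds because negating an element flips the sign of its leading coefficient without altering its support. For closure under addition, given $p, q \in Q_m$ with leading monomials $M_p, M_q$ and positive leading coefficients $c_p, c_q$: either $M_p \neq M_q$, in which case the larger leading monomial dominates and its positive coefficient is preserved in $p+q$; or $M_p = M_q$, in which case the leading coefficient of $p+q$ is $c_p + c_q > 0$. Closure under multiplication $Q_m \cdot Q_m \subset Q_m$ follows because $\mathbb{Z}$ is an integral domain, so the leading monomial of $pq$ is $M_p M_q$ with coefficient $c_p c_q > 0$. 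Finally, $Q_1^{\otimes_\mathbb{Z} m} \subset Q_m$ is immediate on simple tensors: $p_1 \otimes \cdots \otimes p_m$ with each $p_i \in Q_1$ maps to $p_1(t_1) \cdots p_m(t_m) \in R_m$, whose leading monomial is the product of the leading monomials of the factors and whose leading coefficient is the product of the positive leading coefficients of the $p_i$.

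Rather than a serious obstacle, the main point of care is making a single consistent choice of monomial order across all $m$ variables so that the induced order on each tensor factor recovers $Q_1$. Once that choice is fixed, every verification is a direct consequence of standard facts about leading terms under a monomial order on a Laurent polynomial ring over an integral domain.
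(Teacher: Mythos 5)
Your proposal is correct and follows essentially the same route as the paper: the paper also orders the multiplicative group of simple tensors $t^{n_1}\otimes\cdots\otimes t^{n_m}\cong\mathbb{Z}^m$ lexicographically and declares an element positive according to the sign of its extremal coefficient, the only cosmetic differences being your explicit identification of $\mathbb{Z}[t^{\pm 1}]^{\otimes_\mathbb{Z} m}$ with $\mathbb{Z}[t_1^{\pm 1},\dots,t_m^{\pm 1}]$ (which cleanly justifies that these monomials form a $\mathbb{Z}$-basis) and your use of the largest rather than the smallest term. Just make sure, as you note, that the monomial order is oriented consistently in every variable so that each tensor factor recovers $Q_1$; with that fixed, your verifications match the paper's.
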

\begin{proof}[Sketch of the proof]
Elements of $\mathbb{Z}[t^{\pm 1}]^{\otimes_\mathbb{Z} m}$ are finite $\mathbb{Z}$-linear combinations of elements in the set
\[
\mathcal{S}_m = \{t^{n_1} \otimes \dots \otimes t^{n_m} \mid n_i \in \mathbb{Z}\}.
\]
The set $\mathcal{S}_m$ as a multiplicative group is isomorphic to $\mathbb{Z}^m$. We order $\mathcal{S}_m$ lexicographically by declaring that the positive elements are
\[
\{t^{n_1} \otimes \dots \otimes t^{n_m} \mid n_j > 0 \text{ and } n_i = 0 \ \text{ for } \ 1 \leq i \leq j-1\}
\]
The set of positive elements $Q_m \subset \mathbb{Z}[t^{\pm 1}]^{\otimes_\mathbb{Z} m}$ is defined to be the set of elements such as the coefficient of the smallest term with respect to the ordering on $\mathcal{S}_m$ is positive in $\mathbb{Z}.$ It is straightforward to check that $Q_m$ defines an ordering on $\mathbb{Z}[t^{\pm 1}]^{\otimes_\mathbb{Z} m}$ as an abelian group and satisfies $Q_m \cdot Q_m \subset Q_m$ and $q_1 \otimes \dots \otimes q_m \in Q_m$ for any $q_1,\dots,q_m \in Q_1$ . 
\end{proof}

\noindent
The appropriate generalization of \cref{fact:OrderingOnTensorProductOfLaurentRing} is stated and proven in \cref{lem:OrderingOnE} and \cref{prop:OrderingOnTensorPowerOfPuiseuxField}. 

The ordering $P_m \subset H_1(K;\mathbb{Z})^{\otimes_\mathbb{Z} m}$ can be defined as follows. Since $H_1(K;\mathbb{Z})^{\otimes_\mathbb{Z} m}$ is a free $\mathbb{Z}[t^{\pm 1}]^{\otimes_\mathbb{Z} m}$-module by \cref{fact:BasisOfTensorPowerOfHomology}, we write elements of $H_1(K;\mathbb{Z})^{\otimes_\mathbb{Z} m}$ as column vectors $\left(r_1, \dots, r_{2^m} \right)^\top$ where $r_i \in \mathbb{Z}[t^{\pm 1}]^{\otimes_\mathbb{Z} m}$. In particular, we choose the basis $\mathcal{B}_m$ for $H_1(K;\mathbb{Z})^{\otimes_\mathbb{Z} m}$ with an ordering defined by \cref{eq:OrderOnTheBasis}. The set of positive elements are
\[
P_m = \{\left(r_1, \dots, r_{2^m} \right)^\top \mid r_j \in Q_m \text{ and } r_i = 0 \ \text{ for } \ j< i\leq 2^m \}.
\]
In other words, a column vector in $H_1(K;\mathbb{Z})^{\otimes_\mathbb{Z} m}$ is in $P_m$ if the bottom-most non-zero entry is in $Q_m$. Since $t^{\otimes_\mathbb{Z} m} \in Q_m$ and  $Q_m \cdot Q_m \subset Q_m$, we have $\tau_{\ab}^{\otimes_\mathbb{Z} m}(P_m) = t^{\otimes_\mathbb{Z} m} \cdot P_m \subset P_m$. That is, $P_m$ is $\tau_{\ab}^{\otimes_\mathbb{Z} m}$-invariant for all $m$. As remarked after \cref{fact:BasisOfTensorPowerOfHomology}, the induced action of $\varphi_{\ab}$ on $H_1(K;\mathbb{Z})^{\otimes_\mathbb{Z} m}$ with respect to the ordered basis $\mathcal{B}_m$ is left multiplication by an upper-triangular matrix where the diagonal entries are in $Q_m$. Therefore, the induced action of $\varphi_{\ab}$ on $H_1(K;\mathbb{Z})^{\otimes_\mathbb{Z} m}$ preserves $P_m$ for all $m$. The ordering $P_m$ is invariant under the induced action of both $\varphi_{\ab}$ and $\tau_{\ab}$. The bi-order $P$ on $K$ defined by $P_m$ as in \cref{fact:ConstructingStandardOrderOnFreeGroup} is invariant under both $\varphi$ and $\tau$. \cref{fact:ConstructingBiOrdersOnFGFreeGroup} gives a bi-order on $F$ that is $\varphi$-invariant. 

\subsection{Remarks about the example and the generalization}
The example of the action by $\varphi = \Theta(\sigma_1^2)$ suggests that if the induced action of a braid of $H_1(K;\mathbb{Z})$ has all positive eigenvalues, then one can define a family of bi-orders $P_m$ lexicographically using the eigenbases for the action. 

The main issue is eigenvalues of a matrix over $\mathbb{Z}[t^{\pm 1}]$, in general, lie in a finite extension of the field of fraction of $\mathbb{Z}[t^{\pm 1}]$. One cannot expect that these field extensions admit any ordering. For instance, the field $\mathbb{Q}(i)$ admits no ordering since it contains elements whose square are negative. To appropriately formulate the condition of positive eigenvalues, we use the theory of real fields due to Artin and Schreier. We discuss some necessary elements of this theory in \cref{subsec:OrderedRingAndFields}. In particular, we extend the field of scalars to a real closed field which is a maximal algebraic extension that admits a (unique) ordering \cref{thm:ExistenceOfRealClosureAndProperties}. We introduce this field in \cref{subsec:puiseux}. We verify that the field still gives the homology group $H_1(K;\mathbb{Z})$ the module structure with necessary properties in \cref{lem:OrderingOnE}, \cref{prop:OrderingOnTensorPowerOfPuiseuxField}, \cref{prop:TensorPowerOfHomologyModuleIsFree} and \cref{lem:EmbeddingTheLowerCentralQuotients} to carry out the argument outlined in this example.

\section{Preliminaries}

\subsection{Ordered rings and fields}
\label{subsec:OrderedRingAndFields}
In this section, we summarize the relevant results about ordered rings and fields following \cite[Chapter XI]{LangAlgebra}. The theory developed in this section is due to Artin and Schreier.

\begin{defi}
    Let $\mathbb{K}$ be a field (resp. ring). An \textbf{ordering} on $\mathbb{K}$ is a subset $P$ of $\mathbb{K}$ such that
    \begin{itemize}
        \item[(ORD 1)]\label{item:ORD1} $\mathbb{K} = P\sqcup \{0\} \sqcup -P$
        \item[(ORD 2)]\label{item:ORD2} If $x,y\in P$, then $x+y$ and $xy$ are both in $P$.
    \end{itemize}
The pair $(\mathbb{K},P)$ is called an \textbf{ordered field} (resp. \textbf{ordered ring}). The set $P$ is referred to as a \textbf{positive cone of } $\mathbb{K}$. 
\end{defi}

Let $(\mathbb{K},P)$ be an ordered field. Then the positive cone $P$ determines a total order $<_P$ on $\mathbb{K}$ by declaring that: 
\[
x<_Py \text{ if and only } y-x \in P.
\]
Conversely, any total order $<$ on a field $\mathbb{K}$ that satisfies 
\begin{itemize}
    \item[(ORD 3)] for all $x,y,z \in \mathbb{K},$ if $x < y$, then $x + z < y+z$ 
    \item[(ORD 4)] for all $x,y,z\in \mathbb{K},$ if $x < y$ and $0 < z $, then $zx < zy$ 
\end{itemize}
defines a positive cone of an ordering on $\mathbb{K}$ by considering 
\[P_< = \{x\in \mathbb{K} \mid x>0\}.\]
Furthermore, the correspondences 
\[
P \longmapsto <_P \text{ and } < \longmapsto P_<
\] 
are inverses of each other and define a bijective correspondence between the set of orderings on $\mathbb{K}$ and the set of total orders on $\mathbb{K}$ satisfying (ORD 3) and (ORD 4). The order $<_P$ satisfies properties similar to the properties of the natural ordering of the real numbers $\mathbb{R}$. 

\begin{lem}\cite[\S 1]{LangAlgebra}
    \label{lem:PropertiesOrderedField}
    Let $(\mathbb{K},P)$ be an ordered field. Then
    \begin{itemize}
        \item $\mathbb{K}$ has characteristic zero. 
        \item $x <_P y$ and $y <_P z$  implies $x <_P z$
        \item $x <_P y$ and $0 <_P z$  implies $xz <_P yz$
        \item $x <_P y$ and $0 <_P x,y$  implies $1/y <_P 1/x$
    \end{itemize}
\end{lem}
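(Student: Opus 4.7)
The plan is to treat the four bullets in sequence, with the crucial preliminary observation being that $1\in P$. To see this, note that by (ORD~1) either $1\in P$ or $-1\in P$; in the latter case, (ORD~2) would give $(-1)(-1)=1\in P$, contradicting (ORD~1). So $1\in P$, and by (ORD~2) every finite sum $1+1+\dots+1$ lies in $P$, hence is nonzero by (ORD~1). This rules out positive characteristic, giving the first bullet.

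For transitivity, I would simply unpack the definitions: $x<_P y$ and $y<_P z$ mean $y-x\in P$ and $z-y\in P$, so by closure of $P$ under addition (part of (ORD~2)) we have $z-x=(z-y)+(y-x)\in P$, i.e.\ $x<_P z$. For compatibility with multiplication, $x<_P y$ and $0<_P z$ give $y-x\in P$ and $z\in P$, so $z(y-x)=zy-zx\in P$ by closure under multiplication, i.e.\ $xz<_P yz$.

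The only slightly nontrivial bullet is the statement about reciprocals, for which the main lemma I would establish first is: if $t\in P$ then $t^{-1}\in P$. Indeed, $t^{-1}\neq 0$ since $tt^{-1}=1$, so by (ORD~1) either $t^{-1}\in P$ or $-t^{-1}\in P$. In the second case, (ORD~2) yields $t\cdot(-t^{-1})=-1\in P$, contradicting the preliminary observation. Hence $t^{-1}\in P$. Now given $0<_P x,y$ and $x<_P y$, I have $y-x\in P$ and $xy\in P$ (by (ORD~2)), hence $(xy)^{-1}\in P$ by the lemma just proved, and so
\[
\frac{1}{x}-\frac{1}{y}=\frac{y-x}{xy}=(y-x)\cdot(xy)^{-1}\in P,
\]
which is exactly $1/y<_P 1/x$.

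I do not anticipate any real obstacle: the argument rests entirely on the two axioms (ORD~1) and (ORD~2), and the only place any care is needed is the sign analysis showing $1\in P$ and that reciprocals of positives are positive; everything else is mechanical manipulation of the defining relation $a<_P b\iff b-a\in P$.
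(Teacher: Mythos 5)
Your proposal is correct and follows essentially the same route as the paper: show $1\in P$ (the paper phrases it as $1=(-1)^2$), deduce characteristic zero, and verify the remaining bullets by unpacking $a<_P b \iff b-a\in P$ together with (ORD 2). The only difference is that you explicitly isolate the fact that inverses of positive elements are positive before computing $1/x-1/y=(y-x)/xy$, a detail the paper's proof leaves implicit.
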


\begin{proof}
    Since $1 = (-1)^2$, we must have $1 \in P$. This implies that $n = 1 + \cdots + 1 \in P$ and, therefore, is zero if and only if $n=0$. Therefore, $\mathbb{K}$ has characteristic zero. Suppose that $x<_P y$. If $y <_P z$, then $z-x = z-y + y-x \in P$ which implies that $x<_Pz$. If we have $0<_Pz$, then $zy-zx =z(y-x) \in P$ which implies that $xz <_P yz$. If we have $0<_Px,y$, then $1/x - 1/y = (y-x)/xy \in P$ which implies that $1/y <_P 1/x$.
\end{proof}

\begin{lem}
    \label{lem:OrderedRingIsAnID}
    Let $(R,P)$ be a commutative ordered ring with unit. Then $R$ is an integral domain. Furthermore, there exists a unique ordering on the field of fractions of $R$ extending the ordering on $R$. 
\end{lem}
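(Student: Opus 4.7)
The plan is to handle the two assertions separately, both by direct manipulation of the ordering axioms.

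For the integral domain claim, I would argue by contradiction: if $x, y$ are nonzero with $xy = 0$, then (ORD 1) places each of $x, y$ in $P$ or $-P$. After possibly replacing $x$ by $-x$ and $y$ by $-y$ and applying (ORD 2), I obtain $\pm xy \in P$, contradicting $xy = 0$. Hence $R$ has no zero divisors and its field of fractions $K := \operatorname{Frac}(R)$ is well defined.

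For the existence of the ordering on $K$, I would propose the positive cone
\[
P' = \{a/b \in K : a, b \in R,\ b \neq 0,\ ab \in P\}.
\]
The verifications all rest on the basic observation that the square $r^2$ of any nonzero $r \in R$ lies in $P$: whichever of $r$ or $-r$ is in $P$, its product with itself lies in $P$ by (ORD 2). So if $a/b = c/d$ then $ab \cdot d^2 = cd \cdot b^2$ with $b^2, d^2 \in P$, forcing $ab \in P \iff cd \in P$, which gives well-definedness; (ORD 1) for $P'$ follows from (ORD 1) for $P$ applied to $ab$; and closure under multiplication is immediate. The step I expect to require genuine care is additive closure: for $a/b, c/d \in P'$, the product of the numerator and denominator of $(a/b)+(c/d) = (ad+bc)/(bd)$ equals $ab\cdot d^2 + cd\cdot b^2 \in P$, but to conclude $(ad+bc)/(bd) \in P'$ I also need $ad+bc \neq 0$. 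If $ad+bc = 0$, multiplying by $bd$ would force $ab \cdot d^2 = -cd \cdot b^2$ with all four factors in $P$, a contradiction.

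For uniqueness, given any ordering $P''$ of $K$ whose restriction to $R$ contains $P$, I would show $P'' = P'$. First, $P'' \cap R = P$: any $r \in P'' \cap R$ with $-r \in P$ would give both $r, -r \in P''$, violating (ORD 1) on $K$. Next, for nonzero $b \in R$ the square $b^2$ lies in $P \subseteq P''$, so by \cref{lem:PropertiesOrderedField} its inverse $1/b^2$ lies in $P''$ as well. Writing $a/b = (ab)(1/b^2)$, we see $a/b \in P''$ iff $ab \in P''$, which by the previous observation is equivalent to $ab \in P$, i.e., $a/b \in P'$. The only real subtlety in the whole argument is the additive closure step together with the need to rule out the vanishing of $ad+bc$; everything else is routine bookkeeping with the axioms.
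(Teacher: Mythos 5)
Your proposal is correct and follows essentially the same route as the paper: the same positive cone $\{a/b \mid ab \in P\}$ on $\operatorname{Frac}(R)$, well-definedness via positivity of squares, and uniqueness from the identity $a/b = ab/b^2$. The extra step ruling out $ad+bc=0$ is harmless but already subsumed in the observation that $(ad+bc)(bd)=abd^2+cdb^2 \in P$ is nonzero.
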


\begin{proof}
    Since $P^2 \subseteq P$, $P(-P) \subseteq -P$ and $(-P)^2 \subseteq P$, the ring $R$ has no zero divisors. It follows that $R$ embeds in its field of fractions $\mathbb{K} = \Frac(R)$. We define the positive cone on $\mathbb{K}$ to be
    \[
    Q = \left\{ \frac{a}{b} \mid ab \in P \right\}.
    \]
    We first check that the set $Q$ is well-defined. Let $a/b\in Q$. Suppose that $a/b = c/d $ for $c,d \in R$ and $b,d \neq 0$. It follows that $ad = bc$ which gives $cd a^2 = ab c^2$. We must have $c\neq 0$. Otherwise, the equalities $ad = bc = 0$ implies that $a =0$, contradicting the statement that $ab \in P$. Now $a^2,c^2,ab \in P$, together with $cd a^2 = ab c^2$ implies that $cd \in P$. Thus, $c/d \in Q$ so the set $Q$ is well-defined.
    
    We now check that (ORD 1) holds for $Q$. The negation of $Q$ is
    \[
        -Q = \left\{-\frac{a}{b} \mid ab \in P \right\} = \left\{\frac{c}{d} \mid cd \in -P \right\}
    \]
    which implies that $Q$ and $-Q$ are disjoint and that $\mathbb{K} = Q \sqcup \{0\} \sqcup -Q$. 
    
    Finally, we verify (ORD 2) for $Q$. Now, let $a/b$ and $c/d$ be in $Q$, we have $abcd \in P$ and $abd^2 +cdb^2 \in P$ which imply that 
    \[
    \frac{ac}{bd} \in Q \text{ and } \frac{a}{b} + \frac{c}{d} = \frac{ad + bc}{bd} \in Q.
    \]
    \cref{lem:PropertiesOrderedField} implies that $Q$ is the only possible positive cone that extends $P$ since $a/b = ab/b^2$. 
\end{proof}

\begin{defi}\label{defi:RealField}
 A field $\mathbb{K}$ is said to be \textbf{real} if $-1$ is not a sum of squares in $\mathbb{K}$. A field $\mathbb{K}$ is said to be \textbf{real closed} if any algebraic extension of $\mathbb{K}$ which is real must be equal to $\mathbb{K}$.  
\end{defi}

The following proposition gives a criterion for a field to be real closed.

\begin{prop}
    \label{prop:CriterionToBeRealClosed}
    {\cite[\S 2, Proposition 2.4]{LangAlgebra}}
    Let $\mathbb{K}$ be a field that is not algebraically closed, but the algebraic closure of $\mathbb{K}$ is $\mathbb{K}(\sqrt{-1})$. Then $\mathbb{K}$ is real and hence is real closed.
\end{prop}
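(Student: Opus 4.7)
The plan is to prove the two conclusions separately: first that $\mathbb{K}$ is real, and then that $\mathbb{K}$ is real closed. Both parts exploit the hypothesis that $\mathbb{K}(\sqrt{-1})$ is algebraically closed and equals the algebraic closure of $\mathbb{K}$.

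For realness, I would assume for contradiction that $-1 = a_1^2 + \cdots + a_n^2$ for some $a_i \in \mathbb{K}$. The key lemma I would establish is that every sum of two squares in $\mathbb{K}$ is already a square in $\mathbb{K}$. To prove this, take $a, b \in \mathbb{K}$ and consider $a + bi \in \mathbb{K}(i)$ where $i = \sqrt{-1}$. Since $\mathbb{K}(i)$ is algebraically closed, there exist $x, y \in \mathbb{K}$ with $(x+yi)^2 = a + bi$, giving $a = x^2 - y^2$ and $b = 2xy$. A direct computation then yields $a^2 + b^2 = (x^2+y^2)^2$, a square in $\mathbb{K}$. By induction on $n$, every sum of squares in $\mathbb{K}$ is a square, so the hypothetical relation $-1 = \sum a_i^2$ forces $-1$ to be a square in $\mathbb{K}$, i.e. $\sqrt{-1} \in \mathbb{K}$. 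But then $\mathbb{K}(\sqrt{-1}) = \mathbb{K}$ would be algebraically closed, contradicting the hypothesis.

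For real-closedness, let $\mathbb{L}/\mathbb{K}$ be any algebraic extension with $\mathbb{L}$ real. Since $\mathbb{K}(i)$ is the algebraic closure of $\mathbb{K}$, the extension $\mathbb{L}$ embeds in $\mathbb{K}(i)$ over $\mathbb{K}$, so $[\mathbb{L}:\mathbb{K}]$ divides $2$ and $\mathbb{L}$ equals either $\mathbb{K}$ or $\mathbb{K}(i)$. The second option is impossible because $-1 = i^2$ exhibits $-1$ as a sum of squares in $\mathbb{K}(i)$, contradicting that $\mathbb{L}$ is real. Hence $\mathbb{L} = \mathbb{K}$, which is the definition of real closed.

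The only genuinely delicate step is the identity $a^2 + b^2 = (x^2+y^2)^2$ together with the induction that promotes it to arbitrary sums of squares; everything else is immediate from the hypothesis that $[\mathbb{K}(\sqrt{-1}):\mathbb{K}] = 2$ together with algebraic closedness of $\mathbb{K}(\sqrt{-1})$. I do not anticipate any substantial obstacle, as the argument is a classical Artin--Schreier-style computation.
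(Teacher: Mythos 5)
Your proof is correct. The paper gives no proof of this proposition---it is quoted directly from Lang---and your argument (every sum of two squares in $\mathbb{K}$ is a square because $a+b\sqrt{-1}$ has a square root $x+y\sqrt{-1}$ in the algebraically closed field $\mathbb{K}(\sqrt{-1})$, so $-1$ being a sum of squares would force $\sqrt{-1}\in\mathbb{K}$ and hence $\mathbb{K}$ algebraically closed; and any real algebraic extension embeds over $\mathbb{K}$ into $\mathbb{K}(\sqrt{-1})$, cannot be $\mathbb{K}(\sqrt{-1})$ since $-1$ is a square there, hence equals $\mathbb{K}$) is precisely the classical Artin--Schreier argument found in the cited source, with the degree-two basis $\{1,\sqrt{-1}\}$ available exactly because the hypothesis rules out $\sqrt{-1}\in\mathbb{K}$.
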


Given a field $\mathbb{K}$, a \emph{real closure} of $\mathbb{K}$ is a real closed field that is algebraic over $\mathbb{K}$. The following theorem asserts that a real closure of a real field $\mathbb{K}$ always exists and is, in fact, an ordered field. 

\begin{thm}
    \label{thm:ExistenceOfRealClosureAndProperties}
    {\cite[\S 2, Theorem 2.2]{LangAlgebra}}
    Let $\mathbb{K}$ be a real field. Then there exists a real closure of $\mathbb{K}$. If $\mathbb{L}$ is real closed, then $\mathbb{L}$ has a unique ordering. The positive elements are precisely the squares of $\mathbb{L}$. Every polynomial of odd degree in $\mathbb{L}[X] $ has a root in $\mathbb{L}$. The algebraic closure of $\mathbb{L}$ is $\mathbb{L}(\sqrt{-1})$.
\end{thm}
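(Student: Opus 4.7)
The plan is to prove the five assertions in sequence, following the classical Artin--Schreier approach. The conceptual heart of the proof is a single lemma --- in a real closed field, every nonzero element is a square or the negative of a square --- from which the uniqueness of the ordering, the characterization of the positive cone, the odd-degree root property, and the algebraic closedness of $\mathbb{L}(\sqrt{-1})$ all follow. Existence of a real closure is by contrast a routine Zorn's lemma argument: fix an algebraic closure $\overline{\mathbb{K}}$ and consider the poset of real subfields of $\overline{\mathbb{K}}$ containing $\mathbb{K}$, ordered by inclusion. A union of a chain of real fields is real (any alleged identity $-1 = \sum x_i^2$ uses only finitely many elements, hence lies in a single member of the chain), so Zorn produces a maximal element $\mathbb{L}$, which is real closed by maximality and algebraic over $\mathbb{K}$ since it lies in $\overline{\mathbb{K}}$.

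Next I would prove the key squares lemma. For $a \in \mathbb{L}^\times$, suppose that neither $a$ nor $-a$ is a square; then $\mathbb{L}(\sqrt{a})$ and $\mathbb{L}(\sqrt{-a})$ are both proper algebraic extensions, and real closedness forces both to fail to be real. Writing out expressions $-1 = \sum(u_i + v_i\sqrt{a})^2$ and $-1 = \sum(u'_i + v'_i\sqrt{-a})^2$ and multiplying or combining appropriately produces a sum-of-squares representation of $-1$ already in $\mathbb{L}$, contradicting that $\mathbb{L}$ is real. Once every nonzero element lies in exactly one of $P := \{x^2 : x \in \mathbb{L}^\times\}$ or $-P$, closure of $P$ under addition follows because if $a, b \in P$ but $a+b \in -P$, then $-(a+b)$ is a square and dividing $-1 = (a + b)/(-(a+b))$ would again express $-1$ as a sum of squares in $\mathbb{L}$. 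Multiplicative closure of $P$ is automatic. This gives an ordering, and uniqueness is immediate since any ordering must contain every square.

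To show every odd-degree polynomial $f \in \mathbb{L}[X]$ has a root in $\mathbb{L}$, I would induct on $\deg f$, reducing to the case $f$ irreducible of odd degree $n > 1$. Then $\mathbb{L}[X]/(f)$ is a proper algebraic extension, so by real closedness it cannot be real, giving an identity $1 + \sum_i g_i(X)^2 = f(X)h(X)$ with $\deg g_i < n$. Comparing degrees, $h$ has odd degree strictly less than $n$, so by induction $h$ has a root $\alpha \in \mathbb{L}$; evaluating yields $1 + \sum g_i(\alpha)^2 = 0$, contradicting reality of $\mathbb{L}$.

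Finally I would show $\mathbb{L}(\sqrt{-1})$ is algebraically closed, which I expect to be the main obstacle. Two ingredients are needed: every element of $\mathbb{L}(\sqrt{-1})$ has a square root (an elementary computation using that every element of $\mathbb{L}$ is $\pm$(square)), and a Galois-theoretic $2$-descent. Given any finite extension $\mathbb{F}/\mathbb{L}(\sqrt{-1})$, pass to a Galois closure $\mathbb{E}/\mathbb{L}$ with group $G$, and let $H \leq G$ be a Sylow $2$-subgroup. The fixed field $\mathbb{E}^H$ has odd degree over $\mathbb{L}$, so by the primitive element theorem combined with the odd-degree root result, $\mathbb{E}^H = \mathbb{L}$; hence $G = H$ is a $2$-group. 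If $\mathrm{Gal}(\mathbb{E}/\mathbb{L}(\sqrt{-1}))$ were nontrivial it would have a subgroup of index $2$, producing a quadratic extension of $\mathbb{L}(\sqrt{-1})$ --- but no such extension exists because every element of $\mathbb{L}(\sqrt{-1})$ is already a square. Therefore $\mathbb{E} = \mathbb{L}(\sqrt{-1})$, completing the argument. The delicate point here is weaving together the odd-degree result, the square-root statement in $\mathbb{L}(\sqrt{-1})$, and Sylow theory simultaneously, which is what makes this last step the real technical core of the theorem.
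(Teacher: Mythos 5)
Your outline is correct: it is the classical Artin--Schreier argument (Zorn's lemma for existence, the ``every nonzero element is $\pm$ a square'' lemma giving the unique ordering, the odd-degree induction via sums of squares, and the Sylow $2$-descent plus square roots in $\mathbb{L}(\sqrt{-1})$ for algebraic closedness). The paper does not prove this theorem but cites \cite[Ch.~XI, \S 2]{LangAlgebra}, and your proof is essentially the one given there, so there is nothing further to compare.
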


It follows from the definitions that an ordered field is an example of a real field. As a consequence of \cref{thm:ExistenceOfRealClosureAndProperties}, a real field $\mathbb{K}$ admits an ordering, making it an ordered field.  This ordering is obtained by restricting the unique ordering from a real closure of $\mathbb{K}$. 

We conclude with the following criterion for a linear automorphism of a finite-dimensional vector space $V$ over an ordered field to preserve a bi-order of the underlying abelian group of the vector space $V$.

\begin{prop}
    \label{prop:PositiveEigenvaluesImpliesOP}
    Let $\mathbb{K}$ be an ordered field, and let $V$ be a finite-dimensional vector space over $\mathbb{K}$. Suppose that $\varphi: V \to V$ is a $\mathbb{K}$-linear automorphism such that all its eigenvalues $\lambda_i$ are positive in $\mathbb{K}$. Then $\varphi$ is an order-preserving $\mathbb{K}$-linear automorphism. In other words, $\varphi$ preserves an ordering of the underlying abelian group of the vector space $V$.    
\end{prop}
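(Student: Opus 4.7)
The plan is to construct a basis of $V$ with respect to which $\varphi$ is represented by a lower-triangular matrix with the positive eigenvalues $\lambda_i$ on the diagonal, and then define a positive cone on $V$ (as an abelian group) by lexicographically ordering coordinates with respect to this basis. This mirrors the strategy used in the motivating example with $\sigma_1^2\in B_3$, where the lower-triangular shape of the induced action and positivity of the diagonal entries in the relevant ordered ring were exactly what made the lex order $\varphi$-invariant.

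First I would show that $\varphi$ admits a flag of invariant subspaces $0 = V_0 \subset V_1 \subset \cdots \subset V_n = V$ with $\dim V_i = i$. Since the hypothesis says the eigenvalues of $\varphi$ lie in $\mathbb{K}$, the characteristic polynomial of $\varphi$ splits completely over $\mathbb{K}$. A standard induction on $\dim V$ then produces a basis $e_1,\dots,e_n$ such that
\[
\varphi(e_i) = \lambda_i e_i + \sum_{j<i} a_{ji}\, e_j
\]
for some $a_{ji}\in \mathbb{K}$: pick an eigenvector $e_1$ for $\lambda_1$, apply the inductive hypothesis to the induced automorphism on $V/\langle e_1\rangle$ (whose eigenvalues are $\lambda_2,\dots,\lambda_n$), and lift. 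Thus $V_i := \mathrm{span}_{\mathbb{K}}(e_1,\dots,e_i)$ is a $\varphi$-invariant flag, and $\varphi$ acts on each successive quotient $V_i/V_{i-1}$ by multiplication by $\lambda_i > 0$.

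Next, define $P \subset V$ as follows: for a nonzero $v = \sum_{i=1}^n c_i e_i$, let $k(v)$ denote the largest index with $c_{k(v)} \neq 0$, and declare $v \in P$ iff $c_{k(v)}$ is positive in the ordered field $\mathbb{K}$. The axioms (ORD 1) and (ORD 2) for $\mathbb{K}$ imply directly that $V = P \sqcup \{0\} \sqcup (-P)$ and that $P$ is closed under addition, so $P$ is a translation-invariant positive cone on the abelian group $V$. For $\varphi$-invariance, take $v \in P$ with $k = k(v)$ and $c_k > 0$; using that $\varphi(V_i) \subseteq V_i$ for every $i$, we obtain
\[
\varphi(v) = c_k \lambda_k e_k + \sum_{j<k} b_j\, e_j
\]
for some $b_j \in \mathbb{K}$. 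The leading coefficient is $c_k \lambda_k$, which is positive as a product of two positive elements of $\mathbb{K}$, so $\varphi(v)\in P$. There is no real obstacle here beyond the triangularization step, which is precisely where the hypothesis that the eigenvalues already lie in $\mathbb{K}$ (rather than in some algebraic extension) is essential; this is what motivates the later passage to a real closed field of scalars when applying the proposition in the Burau setting.
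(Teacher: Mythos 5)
Your proposal is correct and takes essentially the same route as the paper's proof: triangularize $\varphi$ over $\mathbb{K}$ using the hypothesis that the eigenvalues lie in $\mathbb{K}$ (the paper simply asserts the triangular basis, while you sketch the standard induction via the invariant flag), then order $V$ lexicographically by the sign of the highest-index nonzero coordinate and observe that $\varphi$-invariance follows from the positivity of the diagonal entries $\lambda_k$. There is no gap.
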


\begin{proof}
    Let $d = \dim_\mathbb{K}(V)$. Since all the eigenvalues of $V$ are in $\mathbb{K}$, there exists a basis $\{v_1,\dots,v_d\}$ for $V$ such that $\varphi$ can be represented by an upper triangular matrix with its eigenvalues on the diagonal. For all $1\leq i \leq d$, we have
    \[
    \varphi(v_i) = \lambda_i v_i + \sum_{j=1}^{i-1}c_{ij}v_j 
    \]
    where $c_{ij} \in \mathbb{K}$. We consider the subset $P \subset V$ defined by
    \[
    P = \left\{ \sum\limits_{i=1}^k a_i v_i \mid a_k > 0 \text{ and }1 \leq k \leq d\right\}.
    \]
    It follows that $V = P \sqcup \{0\} \sqcup -P $ and that $P+P\subset P$. Thus, $P$ defines a positive cone of the underlying abelian group of $V$. To see that $\varphi(P) \subset P$, we compute
    \begin{align*}
        \varphi\left(\sum\limits_{i=1}^k a_i v_i \right) 
    &= \sum\limits_{i=1}^k a_i \varphi(v_i) \\
    &= \sum\limits_{i=1}^k a_i (\lambda_i v_i + \sum_{j=1}^{i-1}c_{ij}v_j ) \\
    &= \left(\sum\limits_{i=1}^{k-1} a_i (\lambda_i v_i + \sum_{j=1}^{i-1}c_{ij}v_j )\right) + \left(\sum_{j=1}^{k-1}a_kc_{kj}v_j\right) + a_k\lambda_k v_k \in P.
    \end{align*}
    Thus, $\varphi$ preserves the positive cone $P$ of $V$ and $\varphi$ is order-preserving. 
\end{proof}

\subsection{The field of Puiseux series and its ordering} \label{subsec:puiseux}

Let $\mathbb{K}$ be a field. A formal power series over $\mathbb{K}$ is a series of the form
\[
\sum_{i = m}^\infty c_i t^{i}
\]
where $m\in \mathbb{Z}$, $c_i \in \mathbb{K}$ and $c_m \neq 0$. The field of formal power series over $\mathbb{K}$ is denoted as $\mathbb{K}((t))$. A Puiseux series with coefficient in $\mathbb{K}$ is a formal power series of the form
\[
f(t) = \sum_{i = m}^\infty c_i t^{i/n}
\]
where $n\in \mathbb{N}$, $m\in \mathbb{Z}$ and $c_i \in \mathbb{K}$ and $c_m \neq 0$. Let $\displaystyle\mathbb{E} := \bigcup_{n=1}^\infty \mathbb{R}((t^{1/n}))$ be the field of Puiseux series over $\mathbb{R}$. The following proposition follows from \cref{prop:CriterionToBeRealClosed} and \cite[Chapter IV, \S 2, Proposition 8]{SerreLocalFields}

\begin{prop}
\label{prop:EIsRealClosed}
    The field $\mathbb{E}$ of Puiseux series over $
\mathbb{R}$ is real closed. In particular, the field $\mathbb{E}$ admits a unique ordering whose positive cone is precisely the set of squares in $\mathbb{E}.$
\end{prop}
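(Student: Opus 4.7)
The plan is to deduce both claims from the machinery already set up in \cref{subsec:OrderedRingAndFields}, specifically \cref{prop:CriterionToBeRealClosed} and \cref{thm:ExistenceOfRealClosureAndProperties}. It therefore suffices to verify the two hypotheses of \cref{prop:CriterionToBeRealClosed}: that $\mathbb{E}$ is not itself algebraically closed, and that $\mathbb{E}(\sqrt{-1})$ is algebraically closed. Once $\mathbb{E}$ is known to be real closed, the uniqueness of the ordering and the characterization of the positive cone as the set of squares are immediate from \cref{thm:ExistenceOfRealClosureAndProperties}.

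For the first condition, I would observe that $-1$ is not a square in $\mathbb{E}$. Indeed, given any nonzero Puiseux series $f(t) = \sum_{i\geq m} c_i t^{i/n}$ with $c_m \in \mathbb{R}\setminus\{0\}$, its square has leading term $c_m^2 t^{2m/n}$ with $c_m^2 > 0$, so $f(t)^2 = -1$ would force $c_m^2 = -1$, which is impossible in $\mathbb{R}$. Hence $X^2+1$ has no root in $\mathbb{E}$, and in particular $\mathbb{E}$ is not algebraically closed. For the second condition, the plan is to identify $\mathbb{E}(\sqrt{-1})$ with the field of Puiseux series over $\mathbb{C}$, namely $\bigcup_{n=1}^\infty \mathbb{C}((t^{1/n}))$. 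This identification follows from the fact that adjoining $i$ to $\mathbb{R}((t^{1/n}))$ produces exactly $\mathbb{C}((t^{1/n}))$, and the unions commute with this adjunction. The algebraic closedness of the Puiseux field over an algebraically closed field of characteristic zero is the classical Newton--Puiseux theorem, which is precisely the content of the cited \cite[Chapter IV, \S 2, Proposition 8]{SerreLocalFields}.

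The main (and only) substantive ingredient is the Newton--Puiseux theorem itself, but since it is being used as a black box from \cite{SerreLocalFields}, the proof reduces to the bookkeeping identification of $\mathbb{E}(\sqrt{-1})$ with the complex Puiseux field and the one-line check that $-1$ is not a square in $\mathbb{E}$. After these two checks, \cref{prop:CriterionToBeRealClosed} yields that $\mathbb{E}$ is real closed, and \cref{thm:ExistenceOfRealClosureAndProperties} immediately supplies the uniqueness of the ordering together with the description of its positive cone as the set of nonzero squares. No further computation should be required.
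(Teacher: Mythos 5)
Your proposal is correct and follows essentially the same route as the paper: identify $\mathbb{E}(\sqrt{-1})$ with $\bigcup_{n=1}^\infty\mathbb{C}((t^{1/n}))$, invoke \cite[Chapter IV, \S 2, Proposition 8]{SerreLocalFields} for its algebraic closedness, and conclude via \cref{prop:CriterionToBeRealClosed} and \cref{thm:ExistenceOfRealClosureAndProperties}. The only difference is cosmetic: you check that $-1$ is not a square to see $\mathbb{E}$ is not algebraically closed, whereas the paper simply notes that $\mathbb{E}$ is a proper subfield of $\mathbb{E}(\sqrt{-1})$.
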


\begin{proof}
    We note that $\displaystyle\mathbb{E}(\sqrt{-1}) = \bigcup_{n=1}^\infty\mathbb{C}((t^{1/n}))$ and so $\mathbb{E}$ is a proper subfield of $\mathbb{E}(\sqrt{-1})$. By \cite[Chapter IV, \S 2, Proposition 8]{SerreLocalFields}, the field $\displaystyle\mathbb{E}(\sqrt{-1})$ is algebraically closed. Now it follows from \cref{prop:CriterionToBeRealClosed} that the field $\mathbb{E}$ is real closed. \cref{thm:ExistenceOfRealClosureAndProperties} tells us that the field $\mathbb{E}$ has a unique ordering defined by the positive cone consisting of precisely all elements of $\mathbb{E}$ that are square.   
\end{proof}

The fact that the field $\mathbb{E}$ is real closed and contains $\mathbb{Z}[t^{\pm 1}]$ is the main reason we are considering $\mathbb{E}$ to begin with. The unique ordering on $\mathbb{E}$ has a concrete description as follows. Consider the valuation $\deg_{\min}:\mathbb{E} \to \mathbb{Q} \cup \{\infty\}$ defined on $\mathbb{E}$ by setting 
\begin{equation}\label{eq:DegreeMin}
\deg_{\min}(0)=\infty \text{ and } \deg_{\min}(f) = r 
\end{equation}
where $f\neq0$ and $r$ is the smallest exponent of $t$ among all nonzero terms that appear in $f$. The function $\deg_{\min}$ is a valuation in the sense that it is a surjective group homomorphism from the group of units of $\mathbb{E}$ to $(\mathbb{Q},+)$ and also satisfies
\[
\deg_{\min}(f+g) \geq \min\{\deg_{\min}(f),\deg_{\min}(g)\},
\]
see \cite[Chapter 1, \S 1]{SerreLocalFields}. We define the function $\mathfrak{c}: \mathbb{E} \to \mathbb{R}$ setting $\frak{c}(0)=0$ and $\frak{c}(f)$ to be the coefficient of the term with the smallest exponent of $t$ for any $f \in \mathbb{E} \setminus \{0\}$. Using the function $\mathfrak{c}$, we define an ordering on $\mathbb{E}$. 

\begin{lem}
    \label{lem:OrderingOnE}
    The set $Q=\{f \mid \mathfrak{c}(f) > 0\}$ is closed under addition and multiplication. Furthermore, $\mathbb{E} = Q \sqcup \{0\} \sqcup -Q$. Therefore, $(\mathbb{E},Q)$ is an ordered field. 
\end{lem}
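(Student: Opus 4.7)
The plan is to reduce every claim about Puiseux series to the corresponding claim about formal Laurent series over $\mathbb{R}$. Any two elements $f,g\in\mathbb{E}$ lie in some common subring $\mathbb{R}((t^{1/N}))$ (take $N$ to be a common denominator of the exponents appearing in $f$ and $g$), and via the substitution $s = t^{1/N}$ we can view both of them as elements of $\mathbb{R}((s))$. Under this identification $\deg_{\min}$ becomes $1/N$ times the usual $s$-adic valuation on $\mathbb{R}((s))$, and $\mathfrak{c}$ is the usual leading coefficient. I would then verify the three axioms for $(\mathbb{E},Q)$ by arguing with leading terms in $\mathbb{R}((s))$.

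Trichotomy is immediate: for $f\neq 0$ the value $\mathfrak{c}(f)$ is a nonzero real number, so by trichotomy on $\mathbb{R}$ exactly one of $\mathfrak{c}(f)>0$ or $\mathfrak{c}(-f)=-\mathfrak{c}(f)>0$ holds, giving $\mathbb{E} = Q\sqcup\{0\}\sqcup -Q$. Multiplicative closure is equally routine: for nonzero $f,g$ the product $fg$ has leading term $\mathfrak{c}(f)\mathfrak{c}(g)\,t^{\deg_{\min}(f)+\deg_{\min}(g)}$, so $\mathfrak{c}(fg)=\mathfrak{c}(f)\mathfrak{c}(g)$, and positivity is preserved because the positive reals are closed under multiplication.

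The one step that takes any thought is closure under addition, which I would handle by a short case analysis on $\deg_{\min}$. If $\deg_{\min}(f)\neq\deg_{\min}(g)$, the smaller-degree series contributes the leading term of $f+g$, and $\mathfrak{c}(f+g)$ is either $\mathfrak{c}(f)$ or $\mathfrak{c}(g)$, which is already in the positive cone by hypothesis. If the two minimal degrees coincide, then a priori the leading coefficients could cancel and $\deg_{\min}(f+g)$ could jump; this is the only place where one has to be careful. However in our situation $\mathfrak{c}(f)$ and $\mathfrak{c}(g)$ are both positive reals, so $\mathfrak{c}(f)+\mathfrak{c}(g)$ is a positive (in particular nonzero) real, hence $\deg_{\min}(f+g) = \deg_{\min}(f) = \deg_{\min}(g)$ and $\mathfrak{c}(f+g)=\mathfrak{c}(f)+\mathfrak{c}(g) > 0$, so $f+g\in Q$.

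In summary, the argument is essentially bookkeeping, and the only non-trivial observation — the "main obstacle," stated somewhat generously — is that in the equal-degree case of addition the leading coefficients cannot cancel because they are both positive real numbers; this is exactly the place where the ordered structure of the coefficient field $\mathbb{R}$ (rather than, say, $\mathbb{C}$ or $\mathbb{Q}(i)$) is used in an essential way.
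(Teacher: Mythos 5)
Your proposal is correct and follows essentially the same argument as the paper: trichotomy via $\mathfrak{c}(f)\neq 0$ for $f\neq 0$, multiplicativity of leading coefficients, and the observation that in the equal-degree case of addition the positive leading coefficients cannot cancel. The only difference is that you make explicit the reduction to a common subring $\mathbb{R}((t^{1/N}))$, which the paper leaves implicit; this is a harmless (and slightly more careful) bit of bookkeeping, not a different route.
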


\begin{proof}
    Let $f,g \in Q$. Then we can write 
    \[
    f = \frak{c}(f)t^r + \dots \text{ and } g = \frak{c}(g)t^s  + \dots
    \]
    where $\frak{c}(f), \frak{c}(g) > 0$. The smallest nonzero term of $fg$ is $\mathfrak{c}(f) \mathfrak{c}(g)t^{r+s}$ which implies that $fg \in Q$. The smallest term of $f+g$ is either $\frak{c}(f)t^r $, $\frak{c}(g)t^s$, or $(\frak{c}(f) +\frak{c}(g))t^s$ since $\mathfrak{c}(f) + \mathfrak{c}(g) > 0$. In any case, we must have $f+g \in Q$. Now note that $\frak{c}(f) = 0 $ if and only if $f = 0$. Therefore, $Q$ defines an ordering on the field $\mathbb{E}$.
\end{proof}

We refer to the ordering on $\mathbb{E}$ defined in \cref{lem:OrderingOnE} as $Q$. In the subsequent section, we will need to consider the tensor power of $\mathbb{E}$ over $\mathbb{R}$. We denote by $\mathbb{E}_m := \mathbb{E}^{\otimes_\mathbb{R} m}$ the $m$-fold tensor power of $\mathbb{E}$ with coefficients over $\mathbb{R}$.

\begin{prop}
    \label{prop:OrderingOnTensorPowerOfPuiseuxField}
    Let $m \in \mathbb{N}$. There exists an ordering $Q_m$ on $\mathbb{E}_m$ such that $u_1\otimes \dots \otimes u_m \in  Q_m$ for any $ \{u_i\}\subset Q$.
\end{prop}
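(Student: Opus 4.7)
The plan is to build $Q_m$ by embedding $\mathbb{E}_m$ into an iterated Puiseux series field and pulling back the unique ordering provided by \cref{thm:ExistenceOfRealClosureAndProperties}. Set $\mathbb{F}_0 = \mathbb{R}$ and recursively $\mathbb{F}_k = \bigcup_{n \in \mathbb{N}} \mathbb{F}_{k-1}((t_k^{1/n}))$ for $1 \leq k \leq m$. The proof of \cref{prop:EIsRealClosed} only uses that the coefficient field is real closed, so iterating it shows every $\mathbb{F}_k$ is real closed; in particular $\mathbb{F}_m$ admits a unique ordering with positive cone $P_m$. For each $i \in \{1,\ldots,m\}$ the substitution $t \mapsto t_i$ defines a ring embedding $\iota_i: \mathbb{E} \hookrightarrow \mathbb{F}_m$, and the universal property of the tensor product assembles these into a ring homomorphism $\phi: \mathbb{E}_m \to \mathbb{F}_m$ sending $f_1 \otimes \cdots \otimes f_m$ to $\iota_1(f_1) \cdots \iota_m(f_m)$.

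The key step is to show that $\phi$ is injective, which I would verify by induction on $m$. Fix an $\mathbb{R}$-basis $\{e_\alpha\}_{\alpha \in A}$ of $\mathbb{E}$; then $\{e_{\alpha_1} \otimes \cdots \otimes e_{\alpha_m}\}$ is an $\mathbb{R}$-basis of $\mathbb{E}_m$. Given a finite vanishing relation $\sum_{\vec{\alpha}} c_{\vec{\alpha}}\, \iota_1(e_{\alpha_1}) \cdots \iota_m(e_{\alpha_m}) = 0$ in $\mathbb{F}_m$, expand only the factors $\iota_m(e_{\alpha_m})$ as Puiseux series in the outermost variable $t_m$ (the remaining factors lie in the subfield $\mathbb{F}_{m-1}$) and match coefficients of each power of $t_m$. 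Each resulting identity holds in $\mathbb{F}_{m-1}$; the inductive hypothesis converts it into a basis relation in $\mathbb{E}_{m-1}$, and $\mathbb{R}$-linear independence of $\{e_{\alpha_m}\}$ in $\mathbb{E}$ then forces every $c_{\vec{\alpha}}$ to vanish.

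With injectivity in hand, define $Q_m := \phi^{-1}(P_m)$; closure under addition and multiplication is inherited from $P_m$, and the trichotomy $\mathbb{E}_m = Q_m \sqcup \{0\} \sqcup -Q_m$ follows from injectivity of $\phi$ together with the analogous trichotomy in $\mathbb{F}_m$. For the containment $Q^{\otimes_\mathbb{R} m} \subseteq Q_m$, I would invoke \cref{thm:ExistenceOfRealClosureAndProperties}: every $q \in Q$ is a square in $\mathbb{E}$, say $q = s^2$. Writing $q_i = s_i^2 \in Q$, in the ring $\mathbb{E}_m$ one has $q_1 \otimes \cdots \otimes q_m = (s_1 \otimes \cdots \otimes s_m)^2$, and $\phi$ sends this square to a nonzero square in the real closed field $\mathbb{F}_m$, which is positive. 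Hence the simple tensor lies in $Q_m$.

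The principal obstacle is verifying the injectivity of $\phi$, which amounts to a careful bookkeeping argument with Puiseux expansions in several layers; once that is settled, the existence of the ordering and the inclusion $Q^{\otimes_\mathbb{R} m} \subseteq Q_m$ follow from formal features of real closed fields and the universal property of the tensor product.
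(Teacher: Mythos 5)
Your argument is correct, but it takes a genuinely different route from the paper. The paper builds the positive cone by hand: it puts a lexicographic order on the multiplicative group of monomial tensors $t^{q_1}\otimes\cdots\otimes t^{q_m}$, shows every element of $\mathbb{E}_m$ is a formal combination of such monomials whose support is discretely ordered and bounded below (\cref{lem:WellDefinedLowestTerm}), and declares $f$ positive when its lowest coefficient $\mathfrak{c}_m(f)$ is positive; the containment $Q^{\otimes_{\mathbb{R}}m}\subseteq Q_m$ is then the multiplicativity of $\mathfrak{c}_m$ on simple tensors. You instead realize $\mathbb{E}_m$ inside the iterated Puiseux field in independent variables $t_1,\dots,t_m$ and pull back an ordering of that real closed field; the whole weight of the proof shifts to the injectivity of the multiplication map $\phi$, and your inductive coefficient-matching argument for it is sound (it is essentially a linear-disjointness statement: $\mathbb{R}$-independent Puiseux series in $t_m$ with real coefficients stay independent over the coefficient field, which one sees exactly as you describe). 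Two small remarks: your target field, call it $\widetilde{\mathbb{F}}_m$, is not the paper's $\mathbb{F}_m=\Frac(\mathbb{E}_m)$, and the pulled-back order prioritizes the last tensor factor rather than the first, so it need not coincide with the paper's $Q_m$ — but the proposition only asks for existence, and nonzero squares are positive in any ordering, so $Q^{\otimes_{\mathbb{R}}m}\subseteq Q_m$ holds for yours just as you argue. What each approach buys: the paper's construction is elementary, self-contained, and gives the concrete leading-coefficient description of positivity that is convenient in later computations; yours, once injectivity is established (roughly the same amount of bookkeeping as \cref{lem:WellDefinedLowestTerm}), shows in one stroke that $\mathbb{E}_m$ is a domain embedded in a real closed field, which would let one bypass the separate appeal to \cref{lem:OrderedRingIsAnID} and even replace the paper's fraction field $\mathbb{F}_m$ by a real closed overfield directly. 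Only the justification that the iterated Puiseux fields are real closed deserves a line more care than you give it: the cited Serre proposition needs algebraically closed coefficients of characteristic zero, so the induction should pass through $\widetilde{\mathbb{F}}_{k}(\sqrt{-1})=\bigcup_n\bigl(\widetilde{\mathbb{F}}_{k-1}(\sqrt{-1})\bigr)((t_k^{1/n}))$ together with \cref{prop:CriterionToBeRealClosed}, exactly as in \cref{prop:EIsRealClosed}.
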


\begin{proof}[Proof of \cref{prop:OrderingOnTensorPowerOfPuiseuxField}]
    When $m =1$, we have $\mathbb{E}_1 = \mathbb{E}$ and can simply take $Q_1 = Q$ as the ordering on $\mathbb{E}$. Fix a positive integer $m\geq 2$. We first define a lexicographic ordering $\prec$ on the following set of simple tensors
    \[
    S_m = \{t^{q_1} \otimes \dots \otimes t^{q_m} \mid q_i \in \mathbb{Q}\}.
    \]
    In particular, we write  
    \[
    t^{q_1} \otimes \dots \otimes t^{q_m} \prec t^{q'_1} \otimes \dots \otimes t^{q'_m} \Leftrightarrow q_{j} < q'_j \text{ in } \mathbb{Q} \text{ and } q_i = q'_i \ \forall \ 1 \leq i < j. 
    \]
    \begin{lem}
        The set $S_m$ equipped with $\prec$ is an ordered multiplicative abelian group.
    \end{lem}
    \begin{proof}
        Indeed, $S_m$ is a multiplicative abelian group with the group operation given by 
        \[
        (t^{q_1} \otimes \dots \otimes t^{q_m})(t^{q'_1} \otimes \dots \otimes t^{q'_m}) = (t^{q_1+q'_1} \otimes \dots \otimes t^{q_m+q'_m}).
        \]
        To see that $\prec$ defines a total bi-invariant ordering on $S_m$, we consider
        \begin{align*}
    P_{\prec} := \{s\in S_m \mid 1_{S_m} \prec s\} = \{t^{q_1} \otimes \dots \otimes t^{q_m} \mid \exists 1\leq j \leq m, q_j >0 \text{ and } \forall 1 \leq i <j, q_i = 0 \}.    
    \end{align*}
    This set is closed under multiplication and partitions $S_m = P_{\prec} \sqcup \{1_{S_m}\}\sqcup P_{\prec}^{-1}$. Therefore, $\prec$ defines a total bi-invariant ordering on $S_m$ when viewed as a multiplicative abelian group. 
    \end{proof}
    
    Now, we will use the ordering $\prec$ on $S_m$ to define an ordering $Q_m$ for $\mathbb{E}_m$. First, we make an observation about simple tensors in $\mathbb{E}_m$.

    \begin{lem}
        Let $u = u_1 \otimes \dots \otimes u_m \in \mathbb{E}_m$ be a simple tensor where
        \[
        u_i =\sum\limits_{j=k_i}^{\infty} c_{ij}t^{j/d_i} \in \mathbb{E}
        \]
        Then $u$ is a formal $\mathbb{R}$-linear combination of elements in $S_m$. Furthermore, if $u\neq 0$, then the set of all elements in $S_m$ with non-zero coefficient in $u$ is well-ordered with respect to $\prec$. 
    \end{lem}

    \begin{proof}
        Let $d$ be the l.c.m of $d_1,\dots,d_m$. We can rewrite each $u_i$ as 
        \[
        u_i = \sum\limits_{j=k_i}^{\infty} c_{ij}t^{j/d_i} = \sum\limits_{\ell_i=k'_i}^{\infty} b_{i\ell_i}t^{\ell_i/d}
        \]
        where $\ell_i = jd/d_i$ and $b_{i\ell_i} = c_{ij}$. Now we can write 
        \begin{align*}
            u 
            &= \left(\sum\limits_{\ell_1=k'_1}^{\infty} b_{1\ell_1}t^{\ell_1/d}\right) \otimes \dots \otimes \left(\sum\limits_{\ell_m=k'_m}^{\infty} b_{m\ell_m}t^{\ell_m/d}\right) \\
            &= \sum\limits_{\ell_1=k'_1}^{\infty}\dots\sum\limits_{\ell_m=k'_m}^{\infty}\left(b_{1\ell_1}\dots b_{m\ell_m}\right)\left(t^{\ell_1/d} \otimes \dots \otimes t^{\ell_m/d}\right).
        \end{align*}
         The calculation shows that any simple tensor is a formal $\mathbb{R}$-linear combination of elements in $S_m$. The set of all elements in $S_m$ with non-zero coefficient in $u$ is contained in 
        \[
        \{ t^{\ell_1/d} \otimes \dots \otimes t^{\ell_m/d} \mid \ell_i \geq k'_i \text{ and } \ell_i \in \mathbb{Z}\}.
        \]       
        This set is order-isomorphic to $\mathbb{N}^m$ with the lexicographic order and, therefore, is well-ordered. In particular, $t^{k'_1/d} \otimes \dots \otimes t^{k'_m/d}$ is the smallest element in $S_m$ with respect to $\prec$ that has a non-zero coefficient in $u$. 
    \end{proof}

    In view of the previous lemma, we can assume that any simple tensor in $\mathbb{E}_m$ has the form 
    \[
    u = \sum\limits_{j_1 \geq k_1,\dots,j_m \geq k_m}^\infty c_{j_1,\dots,j_m} (t^{j_1/d} \otimes \dots \otimes t^{j_m/d})
    \]
    where $c_{k_1,\dots,k_m} \neq 0$. In particular, the formal sum runs over the set 
    \[
        \{ t^{j_1/d} \otimes \dots \otimes t^{j_m/d} \mid j_i \geq k_i \text{ and } j_i \in \mathbb{Z}\}
    \]
    which is well-ordered with respect to $\prec$. 
    \begin{lem}
    \label{lem:WellDefinedLowestTerm}
        Let $f \in \mathbb{E}_m$. Then $f$ is a formal $\mathbb{R}$-linear combination of elements in $S_m$. Furthermore, if $f\neq 0$, then the set of all elements in $S_m$ with non-zero coefficient in $f$ is well-ordered with respect to $\prec$. Consequently, for any $f \neq 0$, the coefficient of the lowest term, $\mathfrak{c}_m(f) \in \mathbb{R}$, of $f$ with respect to $\prec$ is well-defined. 
    \end{lem}

    \begin{proof}
        The proof of this is similar to the previous lemma. The point is that the set of denominators is still finite since $f$ is a finite $\mathbb{R}$-linear combination of simple tensors. So we can rewrite everything in terms of a single common denominator. In particular, every element of $\mathbb{E}_m$ can be expressed as a formal linear combination of terms in the set 
        \[
        \{ t^{j_1/d} \otimes \dots \otimes t^{j_m/d} \mid j_i \geq k_i \text{ and } j_i \in \mathbb{Z}\}
        \] 
        which is well-ordered. 
    \end{proof}

    Now we are in position to define the ordering $Q_m$ on $\mathbb{E}_m$. Given a nonzero element $f\in \mathbb{E}_m$, we express $f$ as a formal sum of terms in $S_m$ with a single denominator. By \cref{lem:WellDefinedLowestTerm}, this formal sum has a well-defined lowest term among all nonzero terms with respect to the ordering $\prec $ on $S_m$. We let $\mathfrak{c}_m(f) \in \mathbb{R}$ be the coefficient of the lowest nonzero term of $f$ with respect to the ordering $\prec$ of $S_m$. In addition, $\mathfrak{c}_m(0)= 0$. We define
    \[
    Q_m := \{f \in \mathbb{E}_m \mid \mathfrak{c}_m(f) > 0 \}. 
    \]
    Since every nonzero element of $\mathbb{E}_m$ has a well-defined lowest term with respect to $\prec$, we have 
    \[\mathbb{E}_m = Q_m \sqcup\{0\} \sqcup -Q_m.\] 
    Now, let $f,g \in Q_m$. Since the ordering $\prec$ on $S_m$ is bi-invariant, the lowest term of $fg$ is the product of the lowest terms of $f$ and $g$. This implies that $\frak{c}_m(fg) = \frak{c}_m(f)\frak{c}_m(g) >0$ and that $Q_m Q_m \subseteq Q_m$. The lowest term of $f+g$ is either the lowest term of $f$, $g$ or the sum of both since $\mathfrak{c}_m(f), \mathfrak{c}_m(g)$ are both positive. Therefore, $\frak{c}_m(f+g)$ is either $\frak{c}_m(f)$, $\frak{c}_m(g)$, or $\frak{c}_m(f)+\frak{c}_m(g)$. In any case, $\frak{c}_m(f+g) >0$ which implies that $Q_m + Q_m \subset Q_m$. Hence, $Q_m$ defines an ordering on $\mathbb{E}_m$. 

    For the claim that $u_1\otimes \dots \otimes u_m \in  Q_m$, for any $ \{u_i\}\subset Q$, we consider $u_i \in Q \subset \mathbb{E}$ for $1\leq i \leq m$. Note that $\mathfrak{c}_m(u_1\otimes \dots \otimes u_m) = \frak{c}(u_1)\dots\frak{c}(u_m) >0$ which implies that $u_1\otimes \dots \otimes u_m \in Q_m$. This completes the proof of \cref{prop:OrderingOnTensorPowerOfPuiseuxField}.
\end{proof}

By \cref{lem:OrderedRingIsAnID}, the ordered ring $(\mathbb{E}_m,Q_m)$ is an integral domain and therefore embeds in its field of fraction which admits a unique ordering extending that of $Q_m$. We denote the field of fractions of $\mathbb{E}_m$ by $\mathbb{F}_m$. Since the ordering on $\mathbb{F}_m$ extending $Q_m$ is unique, we shall use $Q_m$ to denote the positive cone that defines this ordering.  

\begin{remark}
    It is not true in general that the tensor product of ordered fields over an ordered subfield has to admit an ordering. Consider the field $\mathbb{Q}[x]/(x^2-2)$, which admits an ordering coming from an embedding to $\mathbb{R}$ induced by evaluating $x$ at $\sqrt{2}$. The multiplication 
    \[\mu : \mathbb{Q}[x]/(x^2-2) \otimes_\mathbb{Q}\mathbb{Q}[x]/(x^2-2) \to \mathbb{Q}[x]/(x^2-2)\] 
    defined by $\mu(u\otimes v) = uv$ is a $\mathbb{Q}$-linear map. Therefore, the map $\mu$ must have a nontrivial kernel for dimensional reasons. It follows that $\mathbb{Q}[x]/(x^2-2) \otimes_\mathbb{Q}\mathbb{Q}[x]/(x^2-2)$ is not an integral domain and does not admit any ordering. 
\end{remark}

\begin{remark}
    We suspect that $\mathbb{E}_m$ is already a field, that is, $\mathbb{E}_m = \mathbb{F}_m$ for all $m$. However, we do not need this in our subsequent arguments. 
\end{remark}

\subsection{Braids and the Burau representation}
In this section, we give a definition of the (reduced) Burau representation of braid groups. We refer the reader to \cite{kasselturaev08braid} for more details. In \cite[Section 3.2.2]{kasselturaev08braid}, the Burau representation of $B_n$ is defined as a certain twisted homological representation of the mapping class group of the $n$-punctured disk. The reduced Burau representation is obtained as a summand of this twisted homological representation, see \cite[Section 3.3]{kasselturaev08braid}.

Let $B_n$ be the braid group on $n$ strands. This group has a well-known presentation with $n-1$ generators $\sigma_i$ for $1\leq i \leq n-1$ and relations
\begin{align*}
    \sigma_i \sigma_j &= \sigma_j \sigma_i && \text{if } |i-j|>2, \\
    \sigma_i \sigma_{i+1} \sigma_i &= \sigma_{i+1}\sigma_i\sigma_{i+1} && \text{for } 1 \leq i \leq n-2
\end{align*}
Let $D_n$ be the disk with $n$ punctures which are labeled from $1$ to $n$. Let $\MCG(D_n,\partial D_n)$ be the group of homeomorphisms of $D_n$ fixing the boundary $\partial D_n$ pointwise modulo isotopy. It is well-known that there is an isomorphism between $B_n$ and $\MCG(D_n,\partial D_n)$ given by sending $\sigma_i$ to the Dehn half-twist between the $i$ and $(i+1)$ puncture on $D_n$. By fixing a basepoint $*$ on the boundary, we can identify the fundamental group of $D_n$ with the free group $F$ on $\{x_1,\dots,x_n\}$ where $x_i$ represents the loop based at $*$ that goes once clockwise around the $i^{th}$ puncture. By realizing $\sigma_i$ as a Dehn half-twists around the appropriate punctures, we get an induced action of $\sigma_i$ on $F \cong \pi_1(D_n,*)$. This is known as the Artin action of $B_n$ on $F$ given by 
\[
\sigma_i \cdot x_j =  
\begin{cases}
    x_ix_{i+1}x_i^{-1} & \text{ if } j = i \\
    x_{i} & \text{ if } j = i+1 \\
    x_j & \text{ if } |i-j| \geq 2
\end{cases}
\]
which induces a representation $\Psi:B_n \to \Out(F)$. The representation $\Psi$, up to conjugation, is the action of $B_n \cong \MCG(D_n,\partial D_n)$ on $F \cong \pi_1(D_n,*)$. 

Exploiting this relationship, one can define the (reduced) Burau representation as follows. Consider the following short exact sequence
\begin{equation}\label{eq:ExponentSumSES}
    1 \to K \to F \xrightarrow{\mu} \mathbb{Z} \to 0
\end{equation}
where $\mu(x_i)=1$ for all $1\leq i \leq n$. Since $\mu \circ \Psi(\beta) = \mu$ for all $\beta \in B_n$, the action of $B_n$ preserves the kernel $K$. Thus, we get an action of $B_n$ on $H_1(K;\mathbb{Z})$. Since $K$ is an infinitely generated free group, $H_1(K;\mathbb{Z})$ is a torsion-free abelian group of infinite rank. However, $H_1(K;\mathbb{Z})$ is finitely generated as a $\mathbb{Z}[t^{\pm 1}]$-module where the action of  $t$ is the action of the quotient $\mathbb{Z}$ on $K$ associated to the short exact sequence in \cref{eq:ExponentSumSES}. The following fact is well known. See \cite[Remark 3.11]{kasselturaev08braid}.

\begin{prop}
    \label{prop:HomologyModuleBurau}
    Let $H_{\mathbb{R}} = H_1(K;\mathbb{R})$. Then $H_{\mathbb{R}}$ has a structure of a free module of rank $n-1$ over $\mathbb{R}[t^{\pm 1}]$ with a basis given by
    \[\{ v_{i} \mid 1 \leq i \leq n-1\}\]
    where $v_{i} := [x_ix_{i+1}^{-1}] \in H_\mathbb{R}$. 
\end{prop}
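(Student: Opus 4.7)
I would compute $H_1(K;\mathbb{R})$ cellularly via the infinite cyclic cover $\widetilde{D}_n \to D_n$ corresponding to $K$. Since $D_n$ is aspherical, $H_1(K;\mathbb{Z}) \cong H_1(\widetilde{D}_n;\mathbb{Z})$, and the deck group $\mathbb{Z} = \langle t \rangle$ is exactly what promotes this $\mathbb{Z}$-module to a $\mathbb{Z}[t^{\pm 1}]$-module. Concretely, I would deformation retract $D_n$ onto a wedge $X = \bigvee_{i=1}^n S^1$ whose $i$-th loop represents $x_i$; the cover $\widetilde{X}$ is then a graph with vertex set $\{p_k\}_{k\in\mathbb{Z}}$ and, for each pair $(i,k)$, an edge $\epsilon_i^{(k)}$ running from $p_k$ to $p_{k+1}$, with $t$ acting by $p_k \mapsto p_{k+1}$ and $\epsilon_i^{(k)}\mapsto \epsilon_i^{(k+1)}$.

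The cellular chain complex of $\widetilde{X}$ then becomes a complex of free $\mathbb{Z}[t^{\pm 1}]$-modules: $C_0$ is free of rank one on $p_0$, $C_1$ is free of rank $n$ on $\epsilon_1^{(0)}, \ldots, \epsilon_n^{(0)}$, and each $\epsilon_i^{(0)}$ has boundary $p_1 - p_0 = (t-1) p_0$. Since $t - 1$ is a non-zero-divisor in $\mathbb{Z}[t^{\pm 1}]$, the kernel of the boundary map is
\[
H_1(\widetilde{X};\mathbb{Z}) = \bigl\{(a_1,\ldots,a_n) \in \mathbb{Z}[t^{\pm 1}]^n : a_1 + \cdots + a_n = 0\bigr\}.
\]
A telescoping identification with coefficients $b_i = a_1 + \cdots + a_i$ exhibits this kernel as a free $\mathbb{Z}[t^{\pm 1}]$-module of rank $n-1$ on the basis $\{\epsilon_i^{(0)} - \epsilon_{i+1}^{(0)}\}_{i=1}^{n-1}$.

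Finally, I would match this basis with the claimed classes $v_i = [x_i x_{i+1}^{-1}]$ and extend scalars. Lifting the loop $x_i x_{i+1}^{-1}$ starting at $p_0$, one traverses $\epsilon_i^{(0)}$ to arrive at $p_1$ and then $-\epsilon_{i+1}^{(0)}$ to return to $p_0$, so $v_i$ corresponds to $\epsilon_i^{(0)} - \epsilon_{i+1}^{(0)}$ and $\{v_1, \ldots, v_{n-1}\}$ is a $\mathbb{Z}[t^{\pm 1}]$-basis of $H_1(K;\mathbb{Z})$. Since $H_1(K;\mathbb{Z})$ is free, hence flat, over $\mathbb{Z}[t^{\pm 1}]$, extending scalars gives $H_\mathbb{R} \cong H_1(K;\mathbb{Z}) \otimes_\mathbb{Z} \mathbb{R}$ as a free $\mathbb{R}[t^{\pm 1}]$-module on the same basis. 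I do not foresee a serious obstacle; the main care is in fixing orientation and side conventions so that the deck transformation acts as multiplication by $t$ (rather than $t^{-1}$) and so that the explicit lifts of the loops $x_i x_{i+1}^{-1}$ align with the telescoping basis of the kernel.
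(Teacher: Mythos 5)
Your proof is correct: the cellular chain complex of the infinite cyclic cover of a wedge of $n$ circles, the kernel computation using that $t-1$ is a non-zero-divisor, the telescoping basis $\{\epsilon_i^{(0)}-\epsilon_{i+1}^{(0)}\}$, and the identification of these classes with $v_i=[x_ix_{i+1}^{-1}]$ followed by extension of scalars all go through as written. The paper itself gives no proof of this proposition (it is stated as well known), and your covering-space argument is exactly the standard one it implicitly relies on; the orientation/convention issue you flag only affects whether the deck action is $t$ or $t^{-1}$ and does not affect freeness or the basis.
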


The action of $B_n$ on $H_{\mathbb{R}}$ defines a representation of the braid group $B_n$ into $\Aut(H_\mathbb{R})$. Choosing the basis $v_i$ from \cref{prop:HomologyModuleBurau}, we get the (reduced) Burau representation $\rho: B_n \to \GL_{n-1}(\mathbb{R}[t^{\pm 1}])$:
\begin{equation}\label{eq:ReducedBurauRep}
    \rho(\sigma_1) = \begin{pmatrix}
        -t & 1 & 0 \\
        0 &  1 &0  \\
        0& 0& I_{n-3}
    \end{pmatrix}, \quad \rho(\sigma_i) = \begin{pmatrix}
        I_{i-2} & 0 & 0 & 0 & 0 \\
        0 & 1 & 0 & 0 & 0 \\
        0 & t & -t & 1 & 0 \\
        0 & 0 & 0 & 1 & 0 \\
        0 & 0 & 0 & 0 & I_{n-i-2} \\
    \end{pmatrix} \quad  \rho(\sigma_{n-1}) = \begin{pmatrix}
        I_{n-3} &0  & 0 \\
       0  &  1 & 0  \\
       0 & t & -t
    \end{pmatrix}
\end{equation}
where $2\leq i \leq n-2$. With respect to \cref{eq:ReducedBurauRep}, the elements of $H_\mathbb{R}$ are column vectors, and the action of $B_n$ on $H_\mathbb{R}$ is the usual matrix multiplication. 

We end this section by describing the module structure of $V_m := H_{\mathbb{R}}^{\otimes_\mathbb{R} m}$ over the ring obtained by taking the $m$-fold tensor power $R_m := \mathbb{R}[t^{\pm 1}]^{\otimes_\mathbb{R} m}$. For a tuple of elements $(r_1,\dots,r_m)$ and $(u_1,\dots,u_m)$ where $r_i \in \mathbb{R}[t^{\pm 1}]$ and $u_i \in H_\mathbb{R}$, the map on the $m$-fold Cartesian product of $H_\mathbb{R}$ defined by
\[
(u_1,\dots,u_m) \mapsto (r_1u_1,\dots,r_mu_m)
\]
is $\mathbb{R}$-multilinear since multiplication by $r \in \mathbb{R}[t^{\pm 1}]$ is an $\mathbb{R}$-linear map on $H_\mathbb{R}$. For every tuple $(r_1,\dots,r_m)$ with $r_i \in \mathbb{R}[t^{\pm 1}]$, we get a well-defined $\mathbb{R}$-linear map on $V_m$. For any $1 \leq i \leq m$ and $r_i, r'_i \in \mathbb{R}[t^{\pm 1}]$, we have
\begin{align*}
    &(r_1,\dots,r_i + r'_i, \dots, r_m) \cdot (u_1 \otimes \dots \otimes u_m) \\ 
    = &r_1 u_1 \otimes \dots \otimes(r_i + r'_i) u_i \otimes \dots \otimes r_m u_m \\ 
    = &(r_1,\dots,r_i, \dots, r_m)\cdot (u_1 \otimes \dots \otimes u_m) + (r_1,\dots,r'_i, \dots, r_m)  \cdot (u_1 \otimes \dots \otimes u_m). 
\end{align*}
For any $a \in \mathbb{R}$, $1 \leq i \leq n$ and $r_i \in \mathbb{R}[t^{\pm 1}]$, we have
\begin{align*}
    &a \ (r_1,\dots,r_m)\cdot  (u_1 \otimes \dots \otimes u_m) \\
    = &a \ (r_1u_1 \otimes \dots \otimes r_m u_m )\\
    = & \ (r_1u_1 \otimes \dots \otimes ar_i u_i \otimes \dots \otimes r_m u_m )\\
    = & (r_1,\dots,ar_i,\dots,r_m)\cdot (u_1 \otimes \dots \otimes u_m). 
\end{align*}
Therefore, we have a well-defined action of $R_m$ on $V_m$. This gives $V_m$ the structure of an $R_m$-module. By \cref{prop:HomologyModuleBurau}, the module $H_\mathbb{R}$ is free of rank $n-1$ with a free basis $\{v_i\}$ that corresponds to figure-8 loop around the $i$- and $i+1$-punctures. We show that this basis naturally gives a free basis for $V_m$ as an $R_m$-module.  

\begin{prop}
\label{prop:TensorPowerOfHomologyModuleIsFree}
    The $R_m$-module $V_m$ is a free $R_m$-module of rank $(n-1)^m$ with a basis given by $\{v_{i_1}\otimes \dots \otimes v_{i_m} \mid 1 \leq i_j \leq n-1 \}$. 
\end{prop}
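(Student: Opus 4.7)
The plan is to exploit the fact that $H_{\mathbb{R}}$ is a free $\mathbb{R}[t^{\pm 1}]$-module with basis $\{v_1,\dots,v_{n-1}\}$ (\cref{prop:HomologyModuleBurau}) and pass to $\mathbb{R}$-vector space bases, where everything is automatically free. The key observation is that the $\mathbb{R}$-basis of $R_m$ and the $\mathbb{R}$-basis of $V_m$ coming from tensor products of $\mathbb{R}$-bases match up cleanly under the $R_m$-action, so linear (in)dependence of the candidate basis over $R_m$ can be checked one $\mathbb{R}$-coefficient at a time.

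The first step is to record the auxiliary $\mathbb{R}$-bases. Since $H_\mathbb{R}$ is free of rank $n-1$ over $\mathbb{R}[t^{\pm 1}]$ with basis $\{v_i\}$, the set $\{t^k v_i : k\in\mathbb{Z},\ 1\leq i\leq n-1\}$ is an $\mathbb{R}$-basis of $H_\mathbb{R}$. Because the $m$-fold $\mathbb{R}$-tensor product of $\mathbb{R}$-vector spaces with prescribed bases has as basis the set of simple tensors of basis elements,
\[
\mathcal{B} := \{t^{k_1}v_{i_1}\otimes\cdots\otimes t^{k_m}v_{i_m} : k_j\in\mathbb{Z},\ 1\leq i_j\leq n-1\}
\]
is an $\mathbb{R}$-basis of $V_m$, and similarly $\mathcal{T}:=\{t^{k_1}\otimes\cdots\otimes t^{k_m}:k_j\in\mathbb{Z}\}$ is an $\mathbb{R}$-basis of $R_m$. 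Under the $R_m$-action defined just above \cref{prop:TensorPowerOfHomologyModuleIsFree}, one has the identity
\[
(t^{k_1}\otimes\cdots\otimes t^{k_m})\cdot(v_{i_1}\otimes\cdots\otimes v_{i_m}) = t^{k_1}v_{i_1}\otimes\cdots\otimes t^{k_m}v_{i_m},
\]
so elements of $\mathcal{B}$ are precisely the $R_m$-translates of the proposed basis elements by monomials in $\mathcal{T}$.

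For spanning, I would take an arbitrary simple tensor $u_1\otimes\cdots\otimes u_m\in V_m$, write each $u_j = \sum_{i=1}^{n-1} r_{i,j}\, v_i$ with $r_{i,j}\in\mathbb{R}[t^{\pm 1}]$, and expand via $\mathbb{R}$-multilinearity to get
\[
u_1\otimes\cdots\otimes u_m = \sum_{(i_1,\dots,i_m)} (r_{i_1,1}\otimes\cdots\otimes r_{i_m,m})\cdot(v_{i_1}\otimes\cdots\otimes v_{i_m}).
\]
Since simple tensors generate $V_m$ as an $\mathbb{R}$-vector space (hence as an $R_m$-module), this shows the proposed family generates $V_m$ over $R_m$. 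For linear independence, I would suppose $\sum_{(i_1,\dots,i_m)} s_{(i_1,\dots,i_m)}\cdot(v_{i_1}\otimes\cdots\otimes v_{i_m})=0$ with each coefficient in $R_m$, expand each $s_{(i_1,\dots,i_m)}$ as a finite $\mathbb{R}$-linear combination of elements of $\mathcal{T}$, apply the displayed action formula, and collect terms. The resulting equation is a finite $\mathbb{R}$-linear relation among distinct elements of $\mathcal{B}$; as $\mathcal{B}$ is $\mathbb{R}$-linearly independent, every $\mathbb{R}$-coefficient, and hence every $s_{(i_1,\dots,i_m)}$, must vanish.

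The argument is essentially bookkeeping; the only real point of care is to verify that as $(i_1,\dots,i_m)$ and $(k_1,\dots,k_m)$ range over their index sets the elements $t^{k_1}v_{i_1}\otimes\cdots\otimes t^{k_m}v_{i_m}$ are \emph{distinct} elements of $\mathcal{B}$, which is immediate from the uniqueness of the $\mathbb{R}$-basis expansion in $H_\mathbb{R}$. No genuine obstacle arises because $\mathbb{R}$ is a field, so $\mathbb{R}$-vector space tensor products commute with direct sums and preserve bases; the same argument would fail if $\mathbb{R}$ were replaced by a ring over which $H_\mathbb{R}$ were only projective, but this subtlety does not occur here.
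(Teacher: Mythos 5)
Your proof is correct; the spanning half is the same computation the paper does, but your linear-independence argument takes a different route. The paper stays inside the category of $R_m$-modules: it builds coordinate functionals $\overline{\phi}_{i_1,\dots,i_m}\colon V_m \to R_m$ out of the dual basis maps $\phi_j\colon H_\mathbb{R} \to \mathbb{R}[t^{\pm 1}]$, checks via $\mathbb{R}$-multilinearity that they are well defined and $R_m$-linear, and then evaluates them on a putative relation to kill all coefficients. You instead flatten everything to $\mathbb{R}$-linear algebra: using \cref{prop:HomologyModuleBurau} you take the $\mathbb{R}$-basis $\{t^k v_i\}$ of $H_\mathbb{R}$ and $\{t^{k_1}\otimes\cdots\otimes t^{k_m}\}$ of $R_m$, observe that the $R_m$-action matches these up with the $\mathbb{R}$-basis of simple tensors in $V_m$, and deduce vanishing of the $R_m$-coefficients one $\mathbb{R}$-coefficient at a time. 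Both arguments are sound; yours is more elementary in that it avoids verifying that the functionals are $R_m$-module maps, at the cost of black-boxing the fact that tensor products of vector spaces over a field carry the product basis (which is where the real content sits, and which is usually itself proved by exactly the dual-functional device the paper uses). Your approach also makes transparent why the field hypothesis on the base ring matters, whereas the paper's functional argument is the one that generalizes verbatim to free modules over an arbitrary commutative base. The one point you rightly flag, distinctness of the elements $t^{k_1}v_{i_1}\otimes\cdots\otimes t^{k_m}v_{i_m}$ for distinct index tuples, is indeed immediate from the basis description, so there is no gap.
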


\begin{proof}
    To see that the set $\{v_{i_1}\otimes \dots \otimes v_{i_m} \mid 1 \leq i_j \leq n-1 \}$ is a spanning set, it suffices to consider simple tensors in $V_m$. Let $u_1\otimes \dots \otimes u_m \in V_m$ where $\displaystyle u_j = \sum^{n-1}_{i=1} c_{i,j}v_{i}$. Then, we have 
    \begin{align*}
    u_1\otimes \dots \otimes u_m 
    &= \sum^{n-1}_{i=1} c_{i,1}v_{i}\otimes \dots \otimes\sum^{n-1}_{i=1} c_{i,m}v_{i} = \sum c_{i_1,1} v_{i_1}\otimes\dots\otimes c_{i_m,m}  v_{i_m} \\
    &=\sum (c_{i_1,1}\otimes\dots\otimes c_{i_m,m} )(v_{i_1} \otimes \dots \otimes v_{i_m}).
    \end{align*}
    Since the simple tensors are all $R_m$-linear combinations of $\{ v_{i_1} \otimes \dots \otimes v_{i_m} \mid 1 \leq i_j \leq m \}$, the set spans $V_m$. 
    
    For ease of notation, we write $R_1$ for $\mathbb{R}[t^{\pm 1}]$ and $V_1$ for $H_{\mathbb{R}}$. To show linear independence, we consider the $R_1$-module maps $\phi_j: V_1 \to R_1$ defined on the basis $\{v_i \mid 1 \leq i \leq n-1\}$ by
    \[
    \phi_j(v_i) = \begin{cases}
        1 & \text{ if } i = j \\
        0 & \text{ if } i \neq j
    \end{cases}.
    \]
    Composing the direct product of $\phi_i$'s and the canonical projection $R_1 \times \dots \times R_1 \to R_m$, we obtain the maps $\phi_{i_1,\dots,i_m}:V_1 \times \dots \times V_1 \to R_m$. We have

    \begin{align*}
    \phi_{i_1,\dots,i_m}(u_1,\dots,u_j + u'_j,\dots,u_m) 
    &= \phi_{i_1}(u_1) \otimes \dots \otimes \phi_{i_j}(u_j + u'_j) \otimes \dots \otimes \phi_{i_m}(u_m) \\ 
    &= \phi_{i_1}(u_1) \otimes \dots \otimes (\phi_{i_j}(u_j) + \phi_{i_j}(u'_j)) \otimes \dots \otimes \phi_{i_m}(u_m) \\ 
    & = \phi_{i_1,\dots,i_m}(u_1,\dots,u_j,\dots,u_m) + \phi_{i_1,\dots,i_m}(u_1,\dots,u'_j,\dots,u_m), \text{ and } \\
    \phi_{i_1,\dots,i_m}(u_1,\dots,r u_j,\dots,u_m) 
    &= \phi_{i_1}(u_1) \otimes \dots \otimes r\phi_{i_j}(u_j) \otimes \dots \otimes \phi_{i_m}(u_m) \\   
    &= r(\phi_{i_1}(u_1) \otimes \dots \otimes \phi_{i_j}(u_j) \otimes \dots \otimes \phi_{i_m}(u_m)) \\
    \end{align*}
    which implies that $\phi_{i_1,\dots,i_m}$ is $\mathbb{R}$-multilinear.
    We get a well-defined map $\overline{\phi}_{i_1,\dots,i_m}:V_1 \otimes \dots \otimes V_1 \to R_m$. In fact, $\overline{\phi}_{i_1,\dots,i_m}$ is an $R_m$-module map which we can verify by checking the simple tensors:
    \begin{align*}
    \overline{\phi}_{i_1,\dots,i_m}((f_1\otimes \dots \otimes f_m)(u_1\otimes \dots \otimes u_m)) 
    &= \overline{\phi}_{i_1,\dots,i_m}((f_1u_1\otimes \dots \otimes f_mu_m)) \\
    &= \phi_{i_1}(f_1u_1)\otimes \dots \otimes \phi_{i_m}(f_mu_m) \\
    &= f_1\phi_{i_1}(u_1)\otimes \dots \otimes f_m\phi_{i_m}(u_m) \\
    &= (f_1\otimes \dots \otimes f_m)(\phi_{i_1}(u_1)\otimes \dots \otimes \phi_{i_m}(u_m)) \\
    &= (f_1\otimes \dots \otimes f_m)\overline{\phi}_{i_1,\dots,i_m}(u_1\otimes \dots \otimes u_m)
    \end{align*}
    where $f_i \in R_1$ and $u_i \in V_1$. On the set  
    \[
    \{ v_{i_1} \otimes \dots \otimes v_{i_m} \mid 1 \leq i_j \leq m \},
    \]
    the $R_m$-module map $\overline{\phi}_{i_1,\dots,i_m}$ vanishes everywhere except
    \[
    \overline{\phi}_{i_1,\dots,i_m}(v_{i_1} \otimes \dots \otimes v_{i_m}) = 1_{R_m}
    \]
    This implies that $\{ v_{i_1} \otimes \dots \otimes v_{i_m} \mid 1 \leq i_j \leq m \}$ is $R_m$-linearly independent. Therefore, $V_m$ is a free $R_m$-module of rank $(n-1)^m$.
\end{proof}

\begin{cor}
    \label{cor:ExtensionOfScalarsForV_m}
    The $\mathbb{E}_m$-module $V_m \otimes_{R_m} \mathbb{E}_m$ is free of rank $(n-1)^m$. The $\mathbb{F}_m$-vector space $V_m \otimes_{R_m} \mathbb{F}_m$ has dimension $(n-1)^m$. Furthermore, the containment $R_m \subset \mathbb{E}_m \subset \mathbb{F}_m $ induces injective maps $V_m \hookrightarrow V_m \otimes_{R_m} \mathbb{E}_m \hookrightarrow V_m \otimes_{R_m} \mathbb{F}_m$ given by
    \[
    v \longmapsto v\otimes 1 \longmapsto v \otimes 1.
    \]
\end{cor}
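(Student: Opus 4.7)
The plan is to reduce every claim to the freeness of $V_m$ as an $R_m$-module that was just established in \cref{prop:TensorPowerOfHomologyModuleIsFree}. Since $V_m \cong R_m^{(n-1)^m}$ via the basis $\{v_{i_1}\otimes\dots\otimes v_{i_m}\}$, tensoring on the right with any $R_m$-algebra $A$ commutes with direct sums and yields $V_m \otimes_{R_m} A \cong A^{(n-1)^m}$, with the basis $\{v_{i_1}\otimes\dots\otimes v_{i_m}\otimes 1\}$. Applying this to $A = \mathbb{E}_m$ (viewed as an $R_m$-module via the inclusion $R_m = \mathbb{R}[t^{\pm 1}]^{\otimes_\mathbb{R} m}\subset \mathbb{E}^{\otimes_\mathbb{R} m}=\mathbb{E}_m$ induced by $\mathbb{R}[t^{\pm 1}]\subset\mathbb{E}$) gives the first claim that $V_m\otimes_{R_m}\mathbb{E}_m$ is a free $\mathbb{E}_m$-module of rank $(n-1)^m$. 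Applying the same reasoning with $A = \mathbb{F}_m$ gives the dimension claim for the $\mathbb{F}_m$-vector space $V_m\otimes_{R_m}\mathbb{F}_m$.

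For the injectivity of the map $V_m \hookrightarrow V_m \otimes_{R_m} \mathbb{E}_m$ sending $v\mapsto v\otimes 1$, I would identify this map, under the isomorphism above and the isomorphism $V_m\cong R_m^{(n-1)^m}$, with the componentwise inclusion $R_m^{(n-1)^m}\hookrightarrow \mathbb{E}_m^{(n-1)^m}$ coming from $R_m\subset \mathbb{E}_m$. This makes injectivity transparent. Equivalently, one can invoke that free modules are flat, so the injection $R_m \hookrightarrow \mathbb{E}_m$ induces an injection after tensoring with the flat module $V_m$, as recalled in \cref{subsec:TensorProduct}.

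For the injectivity of the second map $V_m\otimes_{R_m}\mathbb{E}_m\hookrightarrow V_m\otimes_{R_m}\mathbb{F}_m$, I would use the associativity of tensor products to rewrite
\[
V_m \otimes_{R_m}\mathbb{F}_m \ \cong\ (V_m\otimes_{R_m}\mathbb{E}_m)\otimes_{\mathbb{E}_m}\mathbb{F}_m,
\]
under which the map $v\otimes 1 \mapsto v\otimes 1$ becomes $w \mapsto w\otimes 1$. Since $V_m\otimes_{R_m}\mathbb{E}_m$ is a free $\mathbb{E}_m$-module by the previous step, it is in particular flat over $\mathbb{E}_m$. By \cref{lem:OrderedRingIsAnID}, $\mathbb{E}_m$ is an integral domain with field of fractions $\mathbb{F}_m$, so $\mathbb{E}_m \hookrightarrow \mathbb{F}_m$ is injective; tensoring this injection on the left with the flat module $V_m\otimes_{R_m}\mathbb{E}_m$ preserves injectivity. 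The composition of the two injections then records the overall map $v\mapsto v\otimes 1\mapsto v\otimes 1$ as claimed.

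The argument is essentially formal once \cref{prop:TensorPowerOfHomologyModuleIsFree} is in hand; the only potentially delicate point is being careful about which tensor product is being taken over which ring when collapsing the chain of inclusions $R_m\subset\mathbb{E}_m\subset\mathbb{F}_m$, which is handled cleanly by associativity of tensor products together with the flatness of free modules.
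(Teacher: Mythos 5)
Your proposal is correct and follows essentially the same route as the paper: both deduce the rank and dimension claims from the freeness $V_m \cong R_m^{(n-1)^m}$ established in \cref{prop:TensorPowerOfHomologyModuleIsFree} (tensoring commutes with direct sums), and both obtain the injectivity via extension of scalars, i.e.\ flatness of free modules applied to the inclusions $R_m \subset \mathbb{E}_m \subset \mathbb{F}_m$. Your write-up merely spells out the flatness/componentwise-inclusion details that the paper leaves implicit.
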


\begin{proof}
    By \cref{prop:TensorPowerOfHomologyModuleIsFree}, $V_m$ is a free $R_m$-module of rank $(n-1)^m$. That is $V_m$ is isomorphic to $\bigoplus\limits_{i=1}^{(n-1)^m}R_m$ as an $R_m$-module. It follows that the $\mathbb{E}_m$-module $V_m \otimes_{R_m} \mathbb{E}_m$ is isomorphic to  $\bigoplus\limits_{i=1}^{(n-1)^m}\mathbb{E}_m$ and is free of rank $(n-1)^m$. Similarly, the $\mathbb{F}_m$-vector space $V_m \otimes_{R_m} \mathbb{F}_m$ is isomorphic to a $\mathbb{F}_m$-vector space, $\bigoplus\limits_{i=1}^{(n-1)^m}\mathbb{F}_m$, of dimension $(n-1)^m$. As a consequence, the containment $R_m \subset \mathbb{E}_m \subset \mathbb{F}_m $ induces injective maps $V_m \hookrightarrow V_m \otimes_{R_m} \mathbb{E}_m \hookrightarrow V_m \otimes_{R_m} \mathbb{F}_m$ given by
    \[
    v \longmapsto v\otimes 1 \longmapsto v \otimes 1.
    \]
    via extension of scalars. 
\end{proof}

\section{Order-preserving braids via the Burau representation}
\subsection{Proof of the main theorem}
\label{subsec:ProofMainTheorem}
Let $F$ be the free group on $\{x_1,\dots, x_n\}$ and consider the short exact sequence
\[
1 \to K \to F \xrightarrow{\mu} \mathbb{Z} \to 0
\]
where $\mu(x_i) = 1$ for all $1\leq i \leq n$.

The following is a variation of \cite[Lemma 3.1]{CaiClayRolfsen}. We start by describing some group homomorphisms used in the lemma. First, we need a few definitions from Fox's free differential calculus \cite{fox1953freedifferentialcalculus}. Consider the group ring $\mathbb{Z} K$ of $K$ over $\mathbb{Z}$, and $\varepsilon:\mathbb{Z}K \to \mathbb{Z}$ the \emph{augmentation homomorphism} of the group ring defined by  
\[
\varepsilon\left(\sum_{i=1}^k c_i g_i \right) = \sum_{i=1}^k c_i.
\]
Let $I = \ker(\varepsilon)$ be an ideal of $\mathbb{Z} K$ and $I^m$ be the power of the ideal $I$ in the group ring $\mathbb{Z}K$. We refer to the ideals $\{I^m\}$ as the \emph{lower central ideals} following \cite{fox1953freedifferentialcalculus}. For any $g \in K = K_1$, we have $g \in K_m$ if and only if $g-1 \in I^m$, see \cite[Proposition 4.6]{fox1953freedifferentialcalculus}. We get an injective group homomorphism $\zeta_m: K_m /K_{m+1} \to I^m /I^{m+1}$ given by 
\[\
\zeta_m([g]) = [g-1],
\] 
where square brackets denote the equivalence class of an element in the appropriate quotient. For a free basis $\{z_i \mid i \in \mathbb{Z}_{>0}\}$ of $K$, the free abelian group $I^m /I^{m+1}$ admits a free $\mathbb{Z}$-basis 
\begin{equation}
\label{eq:FreeZBasisForLowerCentralQuotient}
    \{[(z_{i_1}-1)\dots (z_{i_m}-1)] \mid i_1,\dots,i_m \in \mathbb{Z}_{>0}\},
\end{equation} 
see \cite[Proposition 4.7]{fox1953freedifferentialcalculus}. The $m$-fold tensor power $H_1(K;\mathbb{Z})^{\otimes_\mathbb{Z} m }$ admits the following free $\mathbb{Z}$-basis $\{[z_{i_1}] \otimes_{\mathbb{Z}}\dots\otimes_{\mathbb{Z}}[ z_{i_m}]\}$. The assignment 
\[
[(z_{i_1}-1)\dots (z_{i_m}-1)] \mapsto [z_{i_1}] \otimes_{\mathbb{Z}} \dots \otimes_{\mathbb{Z}} [z_{i_m}]
\]
extends to an isomorphism of abelian groups $\eta_m:I^m/I^{m+1} \to H_1(K;\mathbb{Z})^{\otimes_\mathbb{Z} m }$. Similarly, square brackets denote the equivalence class of an element in the appropriate quotient. Extending the coefficients to $\mathbb{R}$, we have a canonical embedding $H_1(K;\mathbb{Z}) \to H_\mathbb{R}$. This embedding extends to an embedding of the $m$-fold tensor power $\theta_m: H_1(K;\mathbb{Z})^{\otimes_{\mathbb{Z}} m}\to H_\mathbb{R} ^{\otimes_{\mathbb{R}} m}$. On a basis for $H_1(K;\mathbb{Z})^{\otimes_{\mathbb{Z}} m}$, we define $\theta_m$ as
\[
\theta_m([z_{i_1}] \otimes_{\mathbb{Z}} \dots \otimes_{\mathbb{Z}} [z_{i_m}]) = [z_{i_1}] \otimes_{\mathbb{R}} \dots \otimes_{\mathbb{R}} [z_{i_m}].
\]
Recall that $V_m = H_\mathbb{R}^{\otimes_{\mathbb{R}} m}$. In this notation, \cref{cor:ExtensionOfScalarsForV_m} provides us with an embedding $\iota_m: H_\mathbb{R}^{\otimes_{\mathbb{R}} m } \to V_m \otimes_{R_m} \mathbb{F}_m$. In summary, we have the following sequence  
\begin{equation}
    \label{eq:HorizontalMaps}
    \begin{tikzcd}
     K_m/K_{m+1} \arrow[hookrightarrow]{r}{\zeta_m} & 
     I^{m}/I^{m+1} \arrow[r,"\eta_m"] &
     H_1(K;\mathbb{Z})^{\otimes_{\mathbb{Z}} m} \arrow[hookrightarrow]{r}{\theta_m} & 
     H_\mathbb{R}^{\otimes_{\mathbb{R}} m} \arrow[hookrightarrow]{r}{\iota_m}&
     V_m \otimes_{R_m} \mathbb{F}_m
    \end{tikzcd}
\end{equation}
where every map in the sequence is injective.

Now we consider an automorphism $\varphi \in \Aut(F)$ such that $\mu = \mu \circ \varphi$. Since $\mu = \mu \circ \varphi$, the automorphism $\varphi$ preserves the subgroup $K = \ker(\mu)$. The subgroup $K_m$ is a characteristic subgroup of $K$ and is preserved by $\varphi$. Therefore, we have an induced automorphism $\varphi_m:K_m/K_{m+1} \to K_m/K_{m+1}$. We also have an induced automorphism $\varphi_m':I^{m}/I^{m+1} \to I^{m}/I^{m+1}$ on each quotient of the lower central ideals. More concretely, using a free $\mathbb{Z}$-basis for $I^m/I^{m+1}$ as in \cref{eq:FreeZBasisForLowerCentralQuotient}, we can define $\varphi_m'$ as 
\[
\varphi_m'( [(z_{i_1}-1)\dots (z_{i_m}-1)]) = [(\varphi(z_{i_1})-1)\dots (\varphi(z_{i_m})-1)]. 
\]
When $m=1$, the automorphism $\varphi_1$ is the abelianization $\varphi_{\ab}:H_1(K;\mathbb{Z}) \to H_1(K;\mathbb{Z})$. Abusing notation, we also use $\varphi_{\ab}$ to denote the induced automorphism on $H_\mathbb{R}$. As argued in \cref{eq:CheckingVarphiAndTauCommute}, the action of $t$ commutes with the action of $\varphi_{ab}$ on $H_{\mathbb{R}}$ since $\mu = \mu \circ \varphi$. Therefore, $\varphi_{ab}:H_\mathbb{R} \to H_\mathbb{R}$ is also an $\mathbb{R}[t^{\pm 1}]$-module map. The $m$-fold tensor power of $\varphi_{ab}$ is an $R_m$-module map $\varphi_{ab}^{\otimes_{\mathbb{R}} m}: V_m \to V_m$ defined by the formula
\[
\varphi_{ab}^{\otimes_{\mathbb{R}} m}(u_{i_1}\otimes \dots \otimes u_{i_m}) = \varphi_{ab}(u_{i_1})\otimes \dots \otimes \varphi_{ab}(u_{i_m})
\]
on simple tensors and is extended by linearity to $V_m$. By extension of scalars, we get a $\mathbb{F}_m$-linear map $\varphi^{\otimes_\mathbb{R} m}_{ab} \otimes_{R_m} \Id: V_m \otimes_{R_m}\mathbb{F}_m \to V_m \otimes_{R_m} \mathbb{F}_m$. Putting together \cref{eq:HorizontalMaps} and the induced maps by $\varphi$, we have the following lemma.

\begin{lem}
    \label{lem:EmbeddingTheLowerCentralQuotients}
     Let $\varphi \in \Aut(F)$ be an automorphism such that $\pi = \pi \circ \varphi$. Then the following diagram commutes:
     \begin{equation}
     \label{eq:DiagramOfLowCentralSeriesQuotients}
     \begin{tikzcd}
        K_m/K_{m+1} \arrow[hookrightarrow]{r}{\zeta_m} \arrow[d,"\varphi_m"] 
        & I^m/I^{m+1} \arrow{r}{\eta_m} \arrow[d,"\varphi'_m"] 
        & H_1(K;\mathbb{Z})^{\otimes_\mathbb{Z} m }\arrow[hookrightarrow]{r}{\theta_m}\arrow[d,"\varphi_{\ab}^{\otimes_\mathbb{Z} m}"] 
        & H_\mathbb{R}^{\otimes_\mathbb{R} m } \arrow[hookrightarrow]{r}{\iota_m} \arrow[d, "\varphi_{\ab}^{\otimes_\mathbb{R} m}"] & V_m \otimes_{R_m} \mathbb{F}_m \arrow[d, "\varphi_{\ab}^{\otimes_{\mathbb{R}} m}\otimes_{R_m}\Id"] \\
        K_m/K_{m+1} \arrow[hookrightarrow]{r}{\zeta_m} 
        & I^m/I^{m+1} \arrow{r}{\eta_m} 
        &
        H_1(K;\mathbb{Z})^{\otimes_\mathbb{Z} m }\arrow[hookrightarrow]{r}{\theta_m} 
        &
        H_\mathbb{R}^{\otimes_\mathbb{R} m} \arrow[hookrightarrow]{r}{\iota_m} & V_m \otimes_{R_m} \mathbb{F}_m  
     \end{tikzcd}
     \end{equation}
     Furthermore, the quotient $K_m/K_{m+1}$ is mapped injectively into $V_m \otimes_{R_m} \mathbb{F}_m$ by composing all horizontal homomorphisms.  
 \end{lem}

 \begin{proof}
    We recall that in \cite{CaiClayRolfsen}, the authors define $H := H_1(K;\mathbb{Z})$. In particular, the first two squares of \cref{eq:DiagramOfLowCentralSeriesQuotients} are identical to the first two squares of the commutative diagram in \cite[Lemma 3.1]{CaiClayRolfsen}. The commutativity of the first two squares of \cref{eq:DiagramOfLowCentralSeriesQuotients} was proved in \cite[Lemma 3.1]{CaiClayRolfsen}. 

    The commutativity of the last two squares of \cref{eq:DiagramOfLowCentralSeriesQuotients} follows from the fact that the maps involve are canonical. We demonstrate this by direct calculations. We choose a free basis for $K$ denoted as $\{z_i \mid i \in \mathbb{Z}_{>0}\}$. We have
    \begin{align*}
        \varphi_{\ab}^{\otimes_{\mathbb{R}} m}(\theta_m(\sum c_{i_1,\dots,i_m}[z_{i_1}] \otimes_{\mathbb{Z}} \dots \otimes_{\mathbb{Z}}  [z_{i_m}])) 
        &= \varphi_{\ab}^{\otimes_{\mathbb{R}} m}(\sum c_{i_1,\dots,i_m}[z_{i_1}] \otimes_{\mathbb{R}} \dots \otimes_{\mathbb{R}}  [z_{i_m}]) \\
        &= \sum c_{i_1,\dots,i_m}\varphi_{\ab}([z_{i_1}]) \otimes_{\mathbb{R}} \dots \otimes_{\mathbb{R}}  \varphi_{\ab}([z_{i_m}]) \\
        &= \theta_m(\sum c_{i_1,\dots,i_m}\varphi_{\ab}([z_{i_1}]) \otimes_{\mathbb{Z}} \dots \otimes_{\mathbb{Z}}  \varphi_{\ab}([z_{i_m}])) \\
        &= \theta_m(\varphi_{\ab}^{\otimes_{\mathbb{Z}}m}(\sum c_{i_1,\dots,i_m}[z_{i_1}] \otimes_{\mathbb{Z}} \dots \otimes_{\mathbb{Z}}  [z_{i_m}])) \\
    \end{align*}
    where all the sums involved have finitely many non-zero terms each with coefficient in $\mathbb{Z}$. This gives commutativity of the third square of \cref{eq:DiagramOfLowCentralSeriesQuotients}. By another straightforward computation, we get
    \[
    (\varphi_{\ab}^{\otimes_{\mathbb{R}} m} \otimes \Id)(\iota_m(v)) = (\varphi_{\ab}^{\otimes_{\mathbb{R}} m} \otimes \Id)(v \otimes_{R_m} 1) = \varphi_{\ab}^{\otimes_{\mathbb{R}} m} (v) \otimes 1 = \iota_m (\varphi_{\ab}^{\otimes_{\mathbb{R}} m}(v)).
    \]
    which shows the commutativity of the right-most square. The quotient $K_m/K_{m+1}$ is mapped injectively into $V_m \otimes_{R_m} \mathbb{F}_m$ since all the horizontal maps are injective. 
 \end{proof}

Before proving \cref{thm:OPViaBurauRep}, we will need the following ingredient.

\begin{lem}\label{lem:TensorProductOfEigenvalues}
     Suppose that $V$ is a free $R$-module of rank $n-1$. Let $\varphi: V\to V$ be an $R$-linear map. By extending the scalars from $R$ to $\mathbb{E}$, we also view $\varphi: V \to V$ as a $\mathbb{E}$-linear map. If all of the eigenvalues $\{\lambda_1,\dots,\lambda_{n-1}\}$, counted with multiplicity, of the $\mathbb{E}$-linear map $\varphi$ are elements of $\mathbb{E}$, then all the eigenvalues of the $\mathbb{F}_m$-linear map $\varphi^{\otimes_\mathbb{R} m} \otimes_{R_m} \Id : V^{\otimes_{\mathbb{R}} m} \otimes_{R_m} \mathbb{F}_m \to V^{\otimes_{\mathbb{R}} m} \otimes_{R_m} \mathbb{F}_m$ are of the form
     \[
     \lambda_{i_1} \otimes_{\mathbb{R}} \dots \otimes_{\mathbb{R}}\lambda_{i_m} 
     \]
     where $1 \leq i_1,\dots,i_m \leq n-1$ for any $m$.
\end{lem}

\begin{proof}
    Since all of the eigenvalues of $\varphi$ are in $\mathbb{E}$, we can find an ordered $\mathbb{E}$-basis $\{u_1,\dots,u_{n-1}\}$ for $V$ such that
    \[
    \varphi(u_i) = \lambda_i u_i + \sum_{j=1}^{i-1} c_{ij} u_j
     \]
     where $c_{ij} \in \mathbb{E}$. The proof of \cref{prop:TensorPowerOfHomologyModuleIsFree} shows that the set 
     \[\{ u_{i_1} \otimes_{\mathbb{R}} \dots \otimes_{\mathbb{R}} u_{i_m} \otimes_{R_m} 1\mid 1 \leq i_1,\dots,i_m \leq n-1\}\] 
     forms a basis for $V_m\otimes_{R_m} \mathbb{F}_m$. We put a lexicographic ordering on this finite set of basis elements by defining 
     
     \[
     u_{i_1} \otimes_{\mathbb{R}} \dots \otimes_{\mathbb{R}} u_{i_m}\otimes_{R_m} 1 \prec u_{i'_1} \otimes_{\mathbb{R}} \dots \otimes_{\mathbb{R}} u_{i'_m}\otimes_{R_m} 1
     \]
     if and only if $i_j < i'_j$ and $i_k = i'_k$ for all $1\leq k \leq j-1$. Now we have
     
     \begin{align*}
     &(\varphi^{\otimes_\mathbb{R} m} \otimes_{R_m} \Id) (u_{i_1} \otimes_{\mathbb{R}} \dots \otimes_{\mathbb{R}} u_{i_{m}}\otimes_{R_m} 1) \\
     = &(\lambda_{i_1} u_{i_1} + \sum_{j_1=1}^{i_1-1} c_{i_1j_1} u_{j_1}) \otimes_{\mathbb{R}} \dots \otimes_{\mathbb{R}} (\lambda_{i_m} u_{i_m} + \sum_{j_m=1}^{i_m-1} c_{i_mj_m} u_{j_m})\otimes_{R_m} 1.  \\    
     \end{align*}
    \noindent
    We note that the coefficient of the largest term with respect to $\prec$ is 
    \[
    \lambda_{i_1} \otimes_{\mathbb{R}} \dots \otimes_{\mathbb{R}}\lambda_{i_m}\otimes_{R_m} 1 = \lambda_{i_1} \otimes_{\mathbb{R}} \dots \otimes_{\mathbb{R}}\lambda_{i_m}.
    \]
    With respect to the ordered $\mathbb{F}_m$-basis $\{ u_{i_1} \otimes \dots \otimes u_{i_m} \otimes 1 \mid 1 \leq i_1,\dots,i_m \leq n-1\}$ and the lexicographic ordering $\prec$, the map $\varphi^{\otimes_\mathbb{R} m} \otimes_{R_m} \Id$ can be represented as an upper-triangular matrix over $\mathbb{E}_m$ with $\lambda_{i_1} \otimes_{\mathbb{R}} \dots \otimes_{\mathbb{R}}\lambda_{i_m}$ on the diagonal. This proves the lemma.
\end{proof}

\begin{proof}[Proof of \cref{thm:OPViaBurauRep}]
    Let $F$ be the free group on $\{x_1,\dots, x_n\}$ and consider the short exact sequence
    \[
    1 \to K \to F \xrightarrow{\mu} \mathbb{Z} \to 0
    \]
    where $\mu(x_i) = 1$ for all $1\leq i \leq n$. The Artin action of any braid $\beta \in B_n$ on $F$ preserves $\mu.$ Therefore, the automorphism $\beta$ induces a well-defined map $\beta|_K: K \to K$. 

    We claim that if $\beta|_K$ preserves a positive cone $P$ on $K$, then $\beta$ preserves a positive cone on $F$. Indeed the positive cone on $F$ can be defined by 
    \[
    P \cup \mu^{-1}(\{k \in \mathbb{Z} \mid k >0\}). 
    \]
    Furthermore, if the action of $\mathbb{Z}$ on $K$ preserves the same positive cone $P$, then the positive cone $P \cup \mu^{-1}(\{k \in \mathbb{Z} \mid k >0\})$ defines a bi-ordering on $F$. In particular, to show that $\beta$ is order-preserving, it suffices for us to construct a conjugation-invariant positive cone on $K$ preserved by $\beta|_K$ and $x_i$ for any $i$ and hence for all $1 \leq i\leq n$. We now focus on the action of $\beta$ on $K$, and with a slight abuse of notation, write $\beta$ for $\beta|_K$. 
    
    We consider the lower central series of $K$ defined inductively by setting $K_1 = K$ and $K_m = [K,K_{m-1}]$. The automorphism $\beta$ induces an automophism $\beta_m:K_m/K_{m+1} \to K_m/K_{m+1}$ on the quotients of the lower central series. By \cref{lem:EmbeddingTheLowerCentralQuotients}, each quotient $K_m/K_{m+1}$ embeds into the vector space $V_m \otimes_\mathbb{R} \mathbb{F}_m $ where $\mathbb{F}_m$ is an ordered field when equipped with the positive cone $Q_m$. The automorphism $\beta_m$ extends to a $\mathbb{F}_m$-linear automorphism $\beta_{ab}^{'\otimes_\mathbb{R} m}$ where $\beta_{ab}^{'}$ is the induced map on the abelianization of $K$ with coefficients extended to the field $\mathbb{E}$ of Puiseux series over $\mathbb{R}$. 
    
    By the definition of the Burau representation, the map $\beta_{ab}^{'}$ is the image of $\beta$ under the Burau representation with an appropriate choice of basis. By our hypothesis, all of the eigenvalues of the linear map $\beta_{ab}^{'}$ lie in $Q \subset \mathbb{E}$. For any $m$, all of the eigenvalues of the map $\beta_{ab}^{'\otimes_\mathbb{R} m}$ are tensor product of the eigenvalues of $\beta_{ab}^{'}$ by \cref{lem:TensorProductOfEigenvalues}. Therefore, all of the eigenvalues of the map $\beta_{ab}^{'\otimes_\mathbb{R} m}$ lie in $Q_m$ since they are simple tensor of elements in $Q$. \cref{prop:PositiveEigenvaluesImpliesOP} implies that there exists an ordering on $V_m \otimes_\mathbb{R} \mathbb{F}_m$ preserved by $\beta_{ab}^{'\otimes_\mathbb{R} m}$. Restricting this ordering to $K_m/K_{m+1}$, we obtained an ordering $P_m$ on $K_m/K_{m+1}$ that $\beta_m$ preserves. Note that the induced action of $t$ also preserves all of these orderings since the action of $t$ is multiplying by the positive scalar $t^{\otimes m} \in Q_m \subset \mathbb{F}_m$. 

    The positive cones $P_m$ on $K_m/K_{m+1}$ can be used to obtain an invariant ordering on $K$ as follows. It is well known that $\cap_{m=1}^\infty K_m = \{1\}$. Therefore, for any nontrivial element $g \in K$, there exists a largest $m = m(g)>0$ such that $[g]$ is nontrivial in $K_m/K_{m+1}$. We say that $g \in P$ if and only if $g \in P_{m(g)}$. Since $P_m$ is invariant under the induced action of $\beta$ and $t$, the set $P$ is invariant under the action of $\beta$ and $t$. This implies that $\beta$ is order-preserving. 
\end{proof}

\subsection{Applications to order-preserving 3-braids}
\label{subsec:OP3Braid}
We recall that for any $f \in \mathbb{E}$, we defined $\deg_{\min}(f) \in \mathbb{Q}$ to be the smallest exponent of $t$ among all non-zero terms of $f$, see \cref{eq:DegreeMin}. Furthermore, the (reduced) Burau representation is given by \cref{eq:ReducedBurauRep}.
    \[
    \rho(\sigma_1) = \begin{pmatrix}
        -t & 1 \\ 0 & 1
    \end{pmatrix} 
    \quad \text{and} \quad 
    \rho(\sigma_2) = \begin{pmatrix}
        1 & 0 \\ t & -t
    \end{pmatrix}
    \]

Now, we perform some preliminary calculations before proving \cref{thm:EvenEvenBraidsAreOP}.

\begin{lem}
    \label{lem:BaseCase}
    For any $a \in \mathbb{N}$, we have
    \[
    \rho(\sigma_1\sigma_2^{-a}) = \begin{pmatrix}
       f_a - t & (-t)^{-a} \\f_a  & (-t)^{-a}
    \end{pmatrix}
    \]
    where $f_a = \sum\limits_{i=0}^{a-1}(-t)^{-i}$, $a \in \mathbb{N},$ and $f_{0} = 0$ by convention. In particular, $\deg_{\min}(\tr(\rho(\sigma_1\sigma_2^{-a}))) = -a$.
\end{lem}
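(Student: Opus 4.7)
The plan is a straightforward induction plus matrix multiplication, organized to first handle the $\sigma_2^{-a}$ factor and then absorb the final $\sigma_1$.

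First I would compute $\rho(\sigma_2^{-1})$ directly by inverting the $2\times 2$ matrix
\[
\rho(\sigma_2) = \begin{pmatrix} 1 & t \\ 0 & -t \end{pmatrix},
\]
yielding $\rho(\sigma_2^{-1}) = \begin{pmatrix} 1 & 1 \\ 0 & -t^{-1} \end{pmatrix}$. Since $f_1=(-t)^0=1$, this is the base case of the auxiliary claim
\[
\rho(\sigma_2^{-a}) = \begin{pmatrix} 1 & f_a \\ 0 & (-t)^{-a} \end{pmatrix}.
\]
I would establish this by induction on $a$: multiplying $\rho(\sigma_2^{-a})$ on the right by $\rho(\sigma_2^{-1})$ yields an upper triangular matrix whose $(1,2)$ entry is $f_a\cdot(-t^{-1})+1$, which equals $\sum_{i=0}^{a}(-t)^{-i}=f_{a+1}$, and whose $(2,2)$ entry is $(-t)^{-a}\cdot(-t^{-1})=(-t)^{-(a-1-\cdots)}$—more precisely, $(-t)^{-(a+1)}$. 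This closes the induction.

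Next I would multiply the resulting matrix by $\rho(\sigma_1)=\begin{pmatrix} -t & 0 \\ 1 & 1 \end{pmatrix}$ on the right, a single two-by-two computation that gives
\[
\rho(\sigma_2^{-a}\sigma_1)=\begin{pmatrix} -t + f_a & f_a \\ (-t)^{-a} & (-t)^{-a} \end{pmatrix},
\]
matching the claimed formula.

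For the trace statement, I would use the cyclic invariance of trace, so that $\tr(\rho(\sigma_1\sigma_2^{-a}))=\tr(\rho(\sigma_2^{-a}\sigma_1))=(f_a-t)+(-t)^{-a}$. Expanding $f_a$ shows that the exponents of $t$ occurring among the nonzero terms of this Laurent polynomial are precisely $\{1,0,-1,\dots,-(a-1),-a\}$, with the term of exponent $-a$ having nonzero coefficient $(-1)^a$. Hence $\deg_{\min}$ of the trace is $-a$, as required. There is no real obstacle here; the only thing to be careful about is making sure the induction step and the sign bookkeeping of $(-t)^{-i}$ are handled cleanly, and that the cyclic invariance of trace is invoked explicitly to switch between $\sigma_2^{-a}\sigma_1$ and $\sigma_1\sigma_2^{-a}$.
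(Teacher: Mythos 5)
Your proposal is correct and follows essentially the same route as the paper: an induction on $a$ driven by multiplication with $\rho(\sigma_2^{-1})=\begin{pmatrix}1 & 1\\ 0 & -t^{-1}\end{pmatrix}$, followed by reading off the trace. The only cosmetic differences are that you induct on the auxiliary formula for $\rho(\sigma_2^{-a})$ and then append $\rho(\sigma_1)$, whereas the paper inducts directly on $\rho(\sigma_2^{-a}\sigma_1)$, and that you make explicit the cyclic invariance of the trace (relating $\sigma_1\sigma_2^{-a}$ to $\sigma_2^{-a}\sigma_1$), which the paper leaves implicit.
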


\begin{proof}
    We proceed by induction on $a$. When $a = 0$, the formula in \cref{lem:BaseCase} is the Burau representation $\rho(\sigma_1)$ defined in \cref{eq:ReducedBurauRep} where $f_0 = 0$ by convention. For the inductive step, we check that
    \[
    \rho(\sigma_1\sigma_2^{-a-1}) =  \begin{pmatrix}
        f_a - t & (-t)^{-a}\\f_a  & (-t)^{-a}
    \end{pmatrix}\begin{pmatrix}
        1 & 0 \\ 1 & -t^{-1}
    \end{pmatrix} = \begin{pmatrix}
        f_{a+1} - t & (-t)^{-a-1}\\ f_{a+1}  & (-t)^{-a-1}
    \end{pmatrix}.
    \] 
    This completes the induction. From the formula, we see that $\deg_{\min}(\tr(\rho(\sigma_1\sigma_2^{-a}))) = -a$.
\end{proof}

\begin{prop}
    \label{prop:PositiveDiscriminant}
    Let $\beta =  \sigma_1\sigma_2^{-a_1}\dots   \sigma_1\sigma_2^{-a_k}$ where $a_i \geq 0$, $k\geq 1$ and denote
    \[
    \rho(\beta) = \begin{pmatrix} \beta_{11} & \beta_{12} \\ \beta_{21} & \beta_{22} \end{pmatrix}
    \]
    Then we have 
    \begin{equation}
        \begin{aligned}
        \label{eq:MinDegreeTrace}
        \deg_{\min}(\beta_{12}) &=\deg_{\min}(\beta_{22}) =   \deg_{\min}(\beta_{11} + \beta_{12}) = \deg_{\min}(\beta_{21} + \beta_{22}) = -a_1 -\dots - a_k  \\
    \end{aligned}
    \end{equation}
    Furthermore, $\deg_{\min}(\beta_{22}) < \deg_{\min}(\beta_{11})$, $\deg_{\min}(\beta_{22}) < \deg_{\min}(\beta_{21})$ if $\beta_{21} \neq 0$. The smallest nonzero coefficient of $\tr(\rho(\beta))$ is $\mathfrak{c}(\tr(\rho(\beta)) = (-1)^{-a_1-\dots-a_k}$. Consequently, $\tr(\rho(\beta))^2 - 4\det(\rho(\beta))$ is positive in $(\mathbb{E},Q)$ and that $\tr(\rho(\beta))$ is positive in $(\mathbb{E},Q)$ if and only if $-a_1-\dots-a_k$ is even.   
\end{prop}

\begin{proof}
    We verify the proposition by inducting on $k$. Using \cref{lem:BaseCase} and letting $\beta =\sigma_1\sigma_2^{-a}$, we have
    \[
    \rho(\beta) = \begin{pmatrix}
        f_a - t & (-t)^{-a} \\ f_a  & (-t)^{-a}
    \end{pmatrix}
    \]
    where $f_a = \sum\limits_{i=0}^{a-1}(-t)^{-i}$ which verifies all the claims about the minimal degrees of $\rho(\beta)$ in \cref{eq:MinDegreeTrace} for any $a \geq 0$. We have $\deg_{\min}(\beta_{11}) = -a + 1 > \deg_{\min}(\beta_{22})$ for all $a \geq 0$. Therefore, the coefficient $\mathfrak{c}(\tr(\rho(\beta)))$ is $(-1)^{-a}$. The trace $\tr(\rho(\beta))$ is positive in $(\mathbb{E},Q)$ if and only if $-a$ is even. Comparing the minimum degree of the terms in the discriminant of $\rho(\beta)$, we get 
    \[
    \deg_{\min}(\tr(\rho(\beta))^2) = -2a < -a + 1 = \deg_{\min}(\det(\rho(\beta)))\] 
    for all $a \geq 0$. Therefore, the discriminant $\tr(\rho(\beta))^2 - 4\det(\rho(\beta))$ is positive in $(\mathbb{E},Q)$. These calculations verify the base case. 

    Suppose that the proposition is true for $k-1$. Let $\beta = \sigma_1\sigma_2^{-a_1}\dots\sigma_1\sigma_2^{-a_k} $ and $\beta' = \sigma_1\sigma_2^{-a_1}\dots\sigma_1\sigma_2^{-a_{k-1}} $. We have 
    \begin{align*}
    \rho(\beta) &= \rho(\beta')\rho(\sigma_1\sigma_2^{-a_k} ) = 
    \begin{pmatrix}
        \beta'_{11} & \beta'_{12} \\ \beta'_{21} & \beta'_{22} 
    \end{pmatrix}
     \begin{pmatrix}
        f_{a_{k}} - t & (-t)^{-a_k}  \\ f_{a_{k}} &  (-t)^{-a_k} \\ 
    \end{pmatrix}
     \\ 
    &=\begin{pmatrix}
    (\beta'_{11}+\beta'_{12})f_{a_k} - t\beta'_{11} & (\beta'_{11}+\beta'_{12})(-t)^{-a_k} \\ (\beta'_{21}+\beta'_{22})f_{a_k} - t\beta'_{21} & (\beta'_{21}+\beta'_{22})(-t)^{-a_k}
    \end{pmatrix} 
    \end{align*}
    The induction hypothesis $\deg_{\min}((\beta'_{11}+\beta'_{12}))= \deg_{\min}((\beta'_{21}+\beta'_{22})) = -a_1 -\dots -a_{k-1}$ implies that 
    \[
    \deg_{\min}(\beta_{12}) = \deg_{\min}(\beta_{22}) = -a_1-\dots-a_k.
    \]
    The minimal degree of 
    \begin{align*}
    \beta_{11} + \beta_{12} &= (\beta'_{11}+\beta'_{12})(f_{a_k}+(-t)^{-a_k}) - \beta'_{11} t, \text{ and} \\
    \beta_{21} + \beta_{22} &= (\beta'_{21}+\beta'_{22})(f_{a_k}+(-t)^{-a_k}) - \beta'_{21} t 
    \end{align*}
    are both $-a_1 - \dots - a_k$ since $\deg_{\min}((\beta'_{11}+\beta'_{12}))= \deg_{\min}((\beta'_{21}+\beta'_{22})) = -a_1 -\dots -a_{k-1}$ and $\deg_{\min}(\beta'_{ 11})$, $\deg_{\min}(\beta'_{21}) > -a_1 - \dots - a_{k-1}$ if $\beta'_{21} \neq 0$. By induction hypothesis, we have $\deg_{\min}(t\beta'_{11}) > -a_1-\dots-a_{k-1}+1$ and therefore
    \[
    \deg_{\min}(\beta_{11}) = \deg_{\min}((\beta'_{11}+\beta'_{12})f_{a_k}) = -a_1-\dots-a_k+1 > \deg_{\min}(\beta_{22}).
    \]
    Similarly, since $\deg_{\min}(t\beta'_{21}) > -a_1-\dots-a_{k-1}+1$ we have 
    \[
    \deg_{\min}(\beta_{21}) = \deg_{\min}((\beta'_{21}+\beta'_{22})f_{a_k}) = -a_1-\dots-a_k+1 > \deg_{\min}(\beta_{22}).
    \]
    The smallest term of $\tr(\rho(\beta))$ is the same as the smallest term of $\beta'_{22}(-t)^{-a_k}$ which is $(-t)^{-a_1-\dots-a_k}$. This implies that $\tr(\rho(\beta))$ is positive in $(\mathbb{E},Q)$ if and only if $-a_1-\dots-a_k$ is even. Finally, by comparing the minimal degree of the terms in the discriminant of $\rho(\beta)$, we have
    \[
    \deg_{\min}(\tr(\rho(\beta)))^2 = -2(a_1+\dots+a_k) < -a_1-\dots-a_k + k = \deg_{\min}(\det(\rho(\beta)))
    \]
    where $a_i \geq 0$ for all $1\leq i \leq k$. Therefore, the discriminant of $\rho(\beta)$ is positive in $(\mathbb{E},Q)$. 
\end{proof}

An immediate corollary of \cref{prop:PositiveDiscriminant} is the following:

\begin{cor}
    \label{cor:EigenvaluesOfpABraids}
    Let $\beta$ be a braid in $B_3$ that is conjugate to $\sigma_1\sigma_2^{-a_1}\dots\sigma_1\sigma_2^{-a_k}\Delta^{2d}$, where $a_i \geq 0$ with at least one $a_i \neq 0$. Then both eigenvalues of $\rho(\beta)$ are in $\mathbb{E}$. Furthermore, the eigenvalues of $\rho(\beta)$
    \begin{enumerate}
        \item are both positive in $(E,\mathbb{Q})$ if $k$ and $a_1+\dots+a_k$ are both even,
        \item are both negative if $k$ and $a_1-\dots-a_k$ are both odd, and
        \item have opposite signs if $k -a_1-\dots-a_k$ is odd.
    \end{enumerate}
\end{cor}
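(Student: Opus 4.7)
The plan is to reduce the claim to the non-central factor $\beta_{0} = \sigma_{2}^{-a_{k}}\sigma_{1}\dots\sigma_{2}^{-a_{1}}\sigma_{1}$ analyzed in \cref{prop:PositiveDiscriminant}, and then read off the signs of the eigenvalues via Vieta's formulas together with the discriminant computation that proposition already provides.

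First I would handle the central factor $\Delta^{2d}$. A direct computation with the Burau matrices $\rho(\sigma_1),\rho(\sigma_2)$ recorded in \cref{eq:ReducedBurauRep} shows that
\[
\rho\bigl((\sigma_{1}\sigma_{2}\sigma_{1})^{2}\bigr) = t^{3}\,I,
\]
so $\rho(\Delta^{2d}) = t^{6d}\,I$. In particular $\rho(\Delta^{2d})$ is a scalar and its scalar value $t^{6d} = (t^{3d})^{2}$ is a square, hence positive in the unique ordering $(\mathbb{E},Q)$ of the real closed field $\mathbb{E}$ (\cref{prop:EIsRealClosed}). Since $\beta$ is conjugate to $\beta_{0}\Delta^{2d}$, the matrix $\rho(\beta)$ is conjugate to $t^{6d}\rho(\beta_{0})$, so its eigenvalues are exactly $t^{6d}$ times the eigenvalues of $\rho(\beta_{0})$. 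Multiplication by the positive scalar $t^{6d}$ preserves both membership in $\mathbb{E}$ and the sign with respect to $Q$, so it suffices to prove all the claims for $\rho(\beta_{0})$.

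Next I would extract the discriminant and trace from \cref{prop:PositiveDiscriminant}: it shows that $\operatorname{tr}(\rho(\beta_{0}))^{2}-4\det(\rho(\beta_{0}))$ is positive in $(\mathbb{E},Q)$. Because $\mathbb{E}$ is real closed, this positive discriminant has a square root in $\mathbb{E}$, so the two roots of the characteristic polynomial of $\rho(\beta_{0})$ lie in $\mathbb{E}$, proving the first assertion. Call them $\lambda_{1},\lambda_{2}\in\mathbb{E}$. From \cref{prop:PositiveDiscriminant} I also have: $\lambda_{1}+\lambda_{2}=\operatorname{tr}(\rho(\beta_{0}))$ is positive exactly when $a_{1}+\cdots+a_{k}$ is even (its leading coefficient is $(-1)^{a_{1}+\cdots+a_{k}}$), while $\lambda_{1}\lambda_{2}=\det(\rho(\beta_{0}))=(-t)^{k-a_{1}-\cdots-a_{k}}$, which is positive in $(\mathbb{E},Q)$ if and only if $k-a_{1}-\cdots-a_{k}$ is even, i.e.\ $k$ and $a_{1}+\cdots+a_{k}$ have the same parity.

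The three cases then follow by a short case analysis on the signs of sum and product. If $k$ and $a_{1}+\cdots+a_{k}$ are both even, the product is positive and the sum is positive, forcing $\lambda_{1},\lambda_{2}>0$. If $k$ and $a_{1}+\cdots+a_{k}$ are both odd, the product is again positive but the sum is negative, forcing $\lambda_{1},\lambda_{2}<0$. If $k-a_{1}-\cdots-a_{k}$ is odd, the product is negative, so $\lambda_{1}$ and $\lambda_{2}$ have opposite signs. I do not anticipate any serious obstacle; the only calculation requiring care is the verification that $\rho(\Delta) = t^{3}I$, which is needed to absorb $\Delta^{2d}$ harmlessly into a positive scalar and to show that conjugation inside $B_{3}$ truly does not change the sign behavior of the eigenvalues.
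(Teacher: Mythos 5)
Your proposal is correct and follows essentially the same route as the paper: positivity of the discriminant from \cref{prop:PositiveDiscriminant} plus real closedness of $\mathbb{E}$ puts both eigenvalues in $\mathbb{E}$, and the three cases follow from the signs of the trace and determinant. Your explicit verification that $\rho(\Delta^{2d})=t^{6d}I$ is a positive scalar and that conjugation leaves the eigenvalues unchanged is a small extra step the paper leaves implicit, and it is a welcome clarification rather than a departure.
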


\begin{proof}
    Let $\beta$ be a braid in $B_3$ that is conjugate to $\sigma_1\sigma_2^{-a_1}\dots\sigma_1\sigma_2^{-a_k}\Delta^{2d}$, where $a_i \geq 0$ with at least one $a_i \neq 0$. The eigenvalues of $\rho(\beta)$ are solutions to 
    \[
    \lambda^2 - \tr(\rho(\beta))\lambda + \det(\rho(\beta)).
    \]
    By \cref{prop:PositiveDiscriminant}, $\tr(\rho(\beta))^2 - 4\det(\rho(\beta))$ is positive in $(\mathbb{E},Q)$.  Since $\mathbb{E}$ is a real closed field,  \cref{thm:ExistenceOfRealClosureAndProperties} implies that $\tr(\rho(\beta))^2 - 4\det(\rho(\beta))$ is a square in $\mathbb{E}$. It follows that the eigenvalues of $\rho(\beta)$ are both in $\mathbb{E}$. If $k$ and $a_1+\dots+a_k$ are both even, then $\tr(\rho(\beta))$ and $\det(\rho(\beta))$ are both positive in $(\mathbb{E},Q)$ which implies that both eigenvalues are positive in $(\mathbb{E},Q)$. If $k$ and $a_1+\dots+a_k$ are both odd, then $\tr(\rho(\beta))$ is negative in $(\mathbb{E},Q)$ while $\det(\rho(\beta))$ is positive in $(\mathbb{E},Q)$. In this case, both eigenvalues of $\rho(\beta)$ are negative in $(\mathbb{E},Q)$. Finally, if $k-a_1-\dots-a_k$ is odd, then $\det(\rho(\beta))$ is negative in $(\mathbb{E},Q)$ which implies that the eigenvalues of $\rho(\beta)$ have different signs in $(\mathbb{E},Q).$
\end{proof}

\begin{proof}[Proof of \cref{thm:EvenEvenBraidsAreOP}]
    Let $\beta$ be a braid in $B_3$ that is conjugate to $\sigma_1\sigma_2^{-a_1}\dots\sigma_1\sigma_2^{-a_k}\Delta^{2d}$, where $a_i \geq 0$ with at least one $a_i \neq 0$. If both $k$ and $a_1+\dots+a_k$ are even, then the eigenvalues of $\rho(\beta)$ are both positive in $(\mathbb{E},Q)$ by \cref{cor:EigenvaluesOfpABraids}. By \cref{thm:OPViaBurauRep}, the braid $\beta$ is order-preserving.
\end{proof}

\begin{proof}[Proof of \cref{cor:SquareOfThreeBraidsAreOP}]
    Since $\beta$ is a non-periodic braid, $\beta$ is conjugate to a braid in the first two cases of \cref{thm:MurasugiClassification}. In the second case, the underlying permutation of $\beta$ is either a 2-cycle or the identity. In either case, the braid $\beta^2$ is a pure braid and is order-preserving by \cite[Prop 4.6]{KinRolfsen18}. In the first case of \cref{thm:MurasugiClassification}, we may assume that $d =0$ since the action of $\Delta^2$ in $\Out(F)$ is trivial. The braid $\beta^2$ is conjugate to a braid of the form $\sigma_1\sigma_2^{-a_1}\dots\sigma_1\sigma_2^{-a_k}$, where $a_i \geq 0$ with at least one $a_i \neq 0$ and where $k$ and $a_1+\dots+a_k$ are both even. By \cref{cor:EigenvaluesOfpABraids}, both eigenvalues of the matrix $\rho(\beta^2)$ are positive in $(\mathbb{E},Q)$. \cref{thm:OPViaBurauRep} implies that $\beta^2$ is an order-preserving braid. 
\end{proof}

\begin{remark}
    It follows from the work of Perron and Rolfsen that any pure braid must be order-preserving. The fact that pure braid is order-preserving has also been observed by Kin and Rolfsen \cite[Prop 4.6]{KinRolfsen18}. Therefore, $\beta^6$ is always an order-preserving braid for any $\beta \in B_3$. The key consequence of \cref{cor:SquareOfThreeBraidsAreOP} is that, in fact, there are a lot of order-preserving 3-braids whose underlying permutation is a 3-cycle. In fact, not a single example of order-preserving $n$-braid of full cycle type is known prior to our paper for any $n$. See \cite[Question 6.10]{KinRolfsen18} and the remark in \cite{CaiClayRolfsen} after Theorem 4.2.
\end{remark}

\begin{remark}
    There are a lot of known examples of non-order-preserving 3-braids whose underlying permutation is a 3-cycle. Most recently, Schreich, Turner, and the first author showed that the family $\sigma_1\sigma_2^{-2k-1}$ is not order-preserving for any $k \in \mathbb{Z}$. The complement of the braided link $(\sigma_1\sigma_2^{-1})^2$ can be realized as a 4-fold irregular cover of the complement of the figure 8-knot. Since the fundamental group of the complement of the figure-8 knot is known to be bi-orderable, via \cite[Theorem 1.1]{PerronRolfsen03} for example, this gives an independent way to see that $(\sigma_1\sigma_2^{-1})^2$ is an order-preserving braid. 
\end{remark}

\subsection{Sporadic examples of order-preserving $n$-braids}
\label{subsec:SporadicExamples}

In view of \cite[Question 6.10]{KinRolfsen18}, we present some sporadic examples of order-preserving $n$-braids for $n \geq 4$ with emphasis on braids whose underlying permutation has one cycle. In particular, we will give some examples of braids that satisfy the hypothesis of \cref{thm:OPViaBurauRep}. For convenience, we say that a braid $\beta \in B_n$ has {\bf positive Burau eigenvalues} if the matrix $\rho(\beta)$ from \cref{eq:ReducedBurauRep} has all $n-1$ positive eigenvalues in $\mathbb{E}$ counted with multiplicity. We will also say that $\beta$ has {\bf one cycle} to mean that the underlying permutation of $\beta$ has one cycle. Before presenting our examples, we make the following observation:

\begin{lem}
    \label{lem:OnlyWorksForOddStrandBraids}
    If $\beta \in B_{2n}$ has one cycle, then $\beta$ does not have positive Burau eigenvalues.      
\end{lem}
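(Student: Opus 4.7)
The plan is to rule out positive Burau eigenvalues by a determinant-parity argument: I will compute $\det(\rho(\beta))$ explicitly and show that, when the underlying permutation is a $2n$-cycle, this determinant is negative in the ordering $Q$ on $\mathbb{E}$, while a product of positive elements must be positive.

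First, I would record that $\det(\rho(\sigma_i)) = -t$ for every Artin generator $\sigma_i$ of $B_{2n}$. This is immediate from \cref{eq:ReducedBurauRep}: each $\rho(\sigma_i)$ is block diagonal (after a trivial rearrangement) with identity blocks and one nontrivial block of determinant $-t$ (the $2\times 2$ block $\bigl(\begin{smallmatrix}-t&0\\1&1\end{smallmatrix}\bigr)$ for $i=1$, the analogous block for $i=n-1$, and the $3\times 3$ block $\bigl(\begin{smallmatrix}1&t&0\\0&-t&0\\0&1&1\end{smallmatrix}\bigr)$ for intermediate $i$, each of which has determinant $-t$). Consequently, for any braid $\beta$,
\[
\det(\rho(\beta)) \;=\; (-t)^{e(\beta)},
\]
where $e(\beta) \in \mathbb{Z}$ is the exponent sum of $\beta$ in the standard generators.

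Next, I would invoke the standard surjection $B_{2n} \to S_{2n}$ sending each $\sigma_i^{\pm 1}$ to the transposition $(i,\,i+1)$. Since every letter contributes a transposition, the parity of $e(\beta)$ equals the sign of the underlying permutation. If $\beta$ has one cycle, this permutation is a $2n$-cycle, which has sign $(-1)^{2n-1} = -1$. Therefore $e(\beta)$ is odd, and hence
\[
\det(\rho(\beta)) \;=\; (-t)^{e(\beta)} \;=\; -\,t^{e(\beta)}.
\]
Since the coefficient of the lowest term of $-t^{e(\beta)}$ is $-1$, we have $\mathfrak{c}(\det(\rho(\beta))) < 0$, so $\det(\rho(\beta)) \in -Q$ by the definition of the ordering from \cref{lem:OrderingOnE}.

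Finally, I would close the argument by a pigeonhole on signs: if $\rho(\beta)$ had all its eigenvalues in $\mathbb{E}$ and all of them lay in $Q$, then their product would also lie in $Q$ since $Q\cdot Q \subseteq Q$. But that product is $\det(\rho(\beta))$, which is in $-Q$, a contradiction. Hence $\beta$ does not have positive Burau eigenvalues. There is no real obstacle here beyond the two routine verifications (the per-generator determinant and the parity identification); the cleanness of the argument is the whole point, and it explains why the criterion of \cref{thm:OPViaBurauRep} is structurally restricted to odd-strand braids when the permutation is a full cycle.
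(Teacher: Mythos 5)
Your proposal is correct and follows essentially the same route as the paper: both arguments note that a full cycle in $B_{2n}$ forces an odd exponent sum, hence $\det(\rho(\beta))=-t^{m}$ is negative in $(\mathbb{E},Q)$, which is incompatible with all eigenvalues being positive. Your version merely spells out the per-generator determinant computation and the parity argument via the permutation representation, details the paper leaves implicit.
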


\begin{proof}
    Since the number of strands is even, the underlying permutation of $\beta$ is an odd permutation. This implies that $\beta$ can only be written as a product of an odd number of $\sigma_i$'s generators. The determinant $\det(\rho(\beta)) = -t^m$ for some $m \in \mathbb{Z}$ and is negative in $\mathbb{E}$. Therefore, $\beta$ cannot have positive Burau eigenvalues.    
\end{proof}

In particular, \cref{lem:OnlyWorksForOddStrandBraids} says that we can only use \cref{thm:OPViaBurauRep} to produce order-preserving braids of one-cycle type when the number of strands is odd. The following are our examples:

\begin{prop}
    The braids 
    \begin{itemize}
        \item $\sigma_4^{-3}\sigma_3^{-3}\sigma_2^3\sigma_1^3$ in $B_5$,
        \item $(\sigma_6^{-3}\sigma_5^{-3}\sigma_4^{-3}\sigma_3^{3}\sigma_2^{3}\sigma_1^{3})^2$ in $B_7$,
        \item $\sigma_8^{-3}\dots\sigma_5^{-3}\sigma_4^{3}\dots\sigma_1^{3}$ in $B_9$
    \end{itemize}  
    have positive Burau eigenvalues and are order-preserving.
\end{prop}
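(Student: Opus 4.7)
The plan is to apply \cref{thm:OPViaBurauRep}, which reduces the proposition to the purely matrix-theoretic statement that for each of the three listed braids $\beta$, every eigenvalue of the Burau matrix $\rho(\beta)$ lies in the positive cone $Q$ of $(\mathbb{E},Q)$. Once eigenvalue positivity is established, order-preservation is automatic.

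First I would compute $\rho(\beta)$ explicitly using \cref{eq:ReducedBurauRep}. Since each generator appears only to the powers $\pm 3$, it is efficient to first record closed forms for $\rho(\sigma_i)^{\pm 3}$ (in the spirit of the calculation carried out in \cref{lem:BaseCase}), whose entries are Laurent polynomials of low degree, and then multiply these factors together. The resulting matrices have sizes $4 \times 4$, $6 \times 6$, and $8 \times 8$ respectively.

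Next I would compute the characteristic polynomial $\chi_\beta(\lambda) \in \mathbb{Z}[t^{\pm 1}][\lambda]$ and verify that all of its roots lie in $\mathbb{E}$ and belong to $Q$. Since $\mathbb{E}$ is real closed (\cref{prop:EIsRealClosed}), the roots automatically lie in $\mathbb{E}(\sqrt{-1})$, and membership in $\mathbb{E}$ reduces to a finite condition on the signs of iterated discriminants of $\chi_\beta$. To check positivity of the roots, I would use the Newton polygon of $\chi_\beta(\lambda)$ with respect to the valuation $\deg_{\min}$: the slopes of the polygon determine the leading Puiseux exponents $r_i$ of the eigenvalues, and the reduced polynomial along each edge determines the leading real coefficients $c_i$. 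The eigenvalue $\lambda_i = c_i t^{r_i} + \cdots$ lies in $Q$ exactly when $c_i > 0$ in $\mathbb{R}$, so the positivity test becomes a finite sign check on a handful of real algebraic numbers.

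Two quick sanity checks can be done first. Each of the three braids has an even total number of generator letters, so $\det \rho(\beta) = (-t)^k$ with $k$ even and hence $\det \rho(\beta) \in Q$; and the leading Puiseux term of $\tr(\rho(\beta))$ must be positive. The main obstacle is simply the size of the computation, particularly in the $B_7$ and $B_9$ cases, where $\chi_\beta(\lambda)$ has degree $6$ and $8$ respectively with coefficients in $\mathbb{Z}[t^{\pm 1}]$. Because $\sigma_{n/2}$ and $\sigma_{n/2+1}$ do not commute, the apparent split of $\beta$ into a ``positive half'' $\sigma_{n/2}^{3}\cdots\sigma_{1}^{3}$ and a ``negative half'' $\sigma_{n-1}^{-3}\cdots\sigma_{n/2+1}^{-3}$ does not give a block decomposition of $\rho(\beta)$, so I expect the verification to rely on either computer algebra or a structural argument exploiting the evident symmetry between the two halves of $\beta$ (for instance, relating the eigenvalue problem for the negative half at parameter $t$ to that of the positive half at $t^{-1}$ via the duality $\rho(\sigma_i^{-1})(t) = \rho(\sigma_i)(t^{-1})^{-1}$ after an appropriate change of basis).
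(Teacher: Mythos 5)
Your reduction is the same as the paper's: invoke \cref{thm:OPViaBurauRep}, compute the reduced Burau matrices from \cref{eq:ReducedBurauRep}, and verify that every eigenvalue of $\rho(\beta)$ lies in the positive cone $Q$ of $\mathbb{E}$. Where you diverge is in how the realness and positivity of the roots of the characteristic polynomial are certified. You propose Sturm/discriminant-type sign conditions (valid over any real closed field) to check that all roots lie in $\mathbb{E}$, plus a Newton polygon analysis with respect to $\deg_{\min}$ to read off the leading exponents and leading real coefficients, with the actual verification delegated to computer algebra or an undeveloped symmetry argument. The paper instead observes, from the explicit computation, that $\chi_5,\chi_7,\chi_9$ are palindromic of degrees $4,6,8$, so their roots occur in reciprocal pairs; it then uses the order topology on $(\mathbb{E},Q)$ and the intermediate value property for polynomials over a real closed field, evaluating at explicit points such as $1, t, t^2, t^5,\dots$ to exhibit $2$, $3$, $4$ sign changes in the interval $(0,1)$, which accounts for half the roots there and, by reciprocity, the other half in $(1,\infty)$ — hence all roots are in $\mathbb{E}$ and positive, with no Newton polygon or discriminant machinery needed. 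Both routes are legitimate; the paper's is lighter because palindromicity kills the realness question and positivity simultaneously, while your scheme is more general but, as written, stops short of the concrete leading-term evaluations that actually close the argument (and the suggested duality $\rho(\sigma_i^{-1})(t)\sim\rho(\sigma_i)(t^{-1})^{-1}$ should be handled with care: it holds only up to conjugation/transposition via Squier's form, and it is not needed here).
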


\begin{proof}
    Since $\mathbb{E}$ is a real closed field, polynomials in one variable over $\mathbb{E}$ satisfies intermediate value theorem \cite[Chapter XI, \S 2, Theorem 2.5]{LangAlgebra}. In other words, we can count the number of roots in $\mathbb{E}$ of a polynomial in one variable over $\mathbb{E}$ by observing the number of sign changes of the polynomial. 

    For brevity, let $\beta_5 = \sigma_4^{-3}\sigma_3^{-3}\sigma_2^3\sigma_1^3$, $\beta_7 = (\sigma_6^{-3}\sigma_5^{-3}\sigma_4^{-3}\sigma_3^{3}\sigma_2^{3}\sigma_1^{3})^2$ and $\beta_9 = \sigma_8^{-3}\dots\sigma_5^{-3}\sigma_4^{3}\dots\sigma_1^{3}$.
    Denote the characteristic polynomials of $\rho(\beta_5)$, $\rho(\beta_7)$ and $\rho(\beta_9)$ by $\chi_5(\lambda)$, $\chi_7(\lambda)$ and $\chi_9(\lambda)$. A straightforward calculation shows that $\chi_5$, $\chi_7$ and $\chi_9$ are all symmetric polynomials of degree 4,6 and 8. The roots of these polynomials are either $\pm 1$ or pairs of reciprocal numbers. To show that $\chi_5$, $\chi_7$ and $\chi_9$ have all positive roots counted with multiplicity in $\mathbb{E}$, we just need to show that $\chi_5$, $\chi_7$ and $\chi_9$ have half of the number of roots in the interval $(0,1) \subset \mathbb{E}$. 
     
     Recall that $t^n < 1$ for all $n>0$. We check that
     \begin{itemize}
         \item  $\chi_5(1) = t^{-6} + O(t^{-5})$, $\chi_5(t^2) = -t^{-3} + O(t^{-2})$ and $\chi_5(t^5) = 2t + O(t^{3})$
         \item  $\chi_7(1) = -t^{-18} + O(t^{-17})$, $\chi_7(t^2) = 3t^{-12} + O(t^{-11})$,  $\chi_7(t^6) = -4t^{-3} + O(t^{-2})$ and $\chi_7(t^{11}) = 1 + O(t)$
         \item $\chi_9(1) = t^{-12} +O(t^{-11})$, $\chi_9(t) = -t^{-8} +O(t^{-7})$, $\chi_9(t^2) = t^{-6} +O(t^{-5})$, $\chi_9(t^5) = -1 +O(t)$, and $\chi_9(t^6) = 1 +O(t)$.
     \end{itemize}
     Thus, all three polynomials change sign 2, 3, and 4 times in the interval $(0,1)$ respectively so all three polynomials $\chi_5$, $\chi_7$ and $\chi_9$ have all positive roots in $\mathbb{E}$ counted with multiplicity. The braids $\beta_5$, $\beta_7$ and $\beta_9$ have positive Burau eigenvalues and are order-preserving.

\end{proof}

\section{Final remarks}
\label{subsec:ConjecturesQuestions}

\cref{thm:OPViaBurauRep} has a direct generalization to outer automorphisms of free groups or bi-orderable surface groups that leave invariant a cohomology class. Indeed, let $\varphi$ be an outer automorphism of a free group or a bi-orderable surface group $G$ such that $\mu = \mu \circ \varphi$ for some cohomology class $\mu: G \to \mathbb{Z}$. In either case, the kernel of $\mu$, $K = \ker(\mu)$, is an infinitely generated free abelian group. The $\mathbb{Z}$-action associated to $\mu$ on $H_1(K;\mathbb{Z})$ still gives $H_1(K;\mathbb{Z})$ the structure of a finitely-generated free $\mathbb{Z}[t^{\pm 1}]$-module. Therefore, if the induced action of $\varphi$ on $H_1(K;\mathbb{Z})$ as a $\mathbb{Z}[t^{\pm 1}]$-module has all positive eigenvalues, then $\varphi$ preserves a bi-order on $G$.  

An initial motivation of the paper is to produce examples of order-preserving braids whose underlying permutation is a single cycle, addressing a question in \cite{KinRolfsen18}. To this end, we proved a general criterion that braids with positive Burau eigenvalues are order-preserving. We explore the extent to which a converse of \cref{thm:OPViaBurauRep} may hold.

There are several families of 3-braids that are known to be not order-preserving in the literature:
\begin{itemize}
    \item $\sigma_1 \in B_3$ \cite[Prop 4.4]{KinRolfsen18},
    \item $\sigma_1\sigma_2$ \cite[Theorem 4.10]{KinRolfsen18},
    \item $\sigma_1\sigma_2^{-2k-1}$ for $k=0$ \cite[Theorem 4.12]{KinRolfsen18} for all $k \in \mathbb{Z}$ \cite{JohnsonScherichTurner2024},
    \item $\sigma_1\sigma_2\sigma_1^{2k}$ for $k \geq 1$ \cite[Theorem 6.1]{KinRolfsen18},
    \item $(\sigma_1\sigma_2)^2\sigma_1^{2k}$ for $k \geq 1$ \cite[Theorem 6.3]{KinRolfsen18}.
\end{itemize}

We checked that if $\beta$ is a braid conjugate to a braid in one of the families listed above, then both eigenvalues of $\rho(\beta)$ are negative in $(\mathbb{E},Q)$ except in the following two cases. The braid $\beta$ could be conjugate to $\sigma_1$ which has one positive and one negative eigenvalue in $(\mathbb{E},Q)$. Alternatively, the braid $\beta$ could be conjugate to $\sigma_1\sigma_2$ which has two non-real eigenvalues. On the other hand, the periodic braid $\sigma_1\sigma_2\sigma_1$ is order-preserving \cite[Theorem 4.10]{KinRolfsen18}, while the eigenvalues of $\rho(\sigma_1\sigma_2\sigma_1)$ have opposite signs in $(\mathbb{E},Q)$. These calculations indicate that the converse of \cref{thm:OPViaBurauRep} is false. Nevertheless, we may expect the following partial converse to hold as an analog of the result by Clay and Rolfsen \cite[Theorem 3.4]{ClayRolfsen12}

\begin{conj}\label{conj:ObstructionToOPBraidViaBurau}
    Let $\rho:\beta \to \GL_{n-1}(\mathbb{Z}[t^{\pm 1}])$ be the (reduced) Burau representation. If $\beta \in B_n$ is an order-preserving braid, then $\rho(\beta)$ has at least one positive eigenvalue in $(\mathbb{E},Q)$.
\end{conj}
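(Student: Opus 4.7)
The plan is to follow the strategy of Clay and Rolfsen in the proof of their Theorem 3.4 \cite{ClayRolfsen12}, suitably adapted to the Alexander module setting. Their argument shows that an order-preserving automorphism of $F_n$ induces a convex cone on $H_1(F_n;\mathbb{R})$ preserved by the induced linear map, and Perron--Frobenius then produces a positive eigenvalue. Our analogous cone will live on $H_1(K;\mathbb{R})$ where $K = \ker\mu$, and the eigenvalue we seek lies in the Puiseux field $\mathbb{E}$ rather than in $\mathbb{R}$.

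Concretely, suppose $\beta$ preserves a bi-order $P$ on $F_n$. Restriction gives a $\beta$-invariant bi-order $P_K := P \cap K$ on $K$. Since $x_1 \in P$, conjugation by $x_1$ preserves $P_K$, so the deck transformation $t$ on $H_1(K;\mathbb{Z})$ preserves the image $D := \pi(P_K)$ under the abelianization $\pi: K \to H_1(K;\mathbb{Z})$. One checks that $D$ is a sub-semigroup of $H_1(K;\mathbb{Z})$, both $\rho(\beta)$- and $t$-invariant, satisfying $D \cup \{0\} \cup -D = H_1(K;\mathbb{Z})$; its $\mathbb{R}_{\geq 0}$-convex hull $\hat D \subset H_1(K;\mathbb{R})$ is therefore an invariant convex cone. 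For each real $s > 0$, the specialization map $\operatorname{ev}_s: H_1(K;\mathbb{R}) \to \mathbb{R}^{n-1}$ obtained by setting $t = s$ is $\beta$-equivariant with respect to the action of $\rho_s(\beta) := \rho(\beta)|_{t=s}$ on the target, and $C_s := \overline{\operatorname{ev}_s(\hat D)}$ is a $\rho_s(\beta)$-invariant closed convex cone in $\mathbb{R}^{n-1}$. Since $\det\rho(\beta) = \pm t^m$ is a monomial, the specialized matrix $\rho_s(\beta)$ is invertible; quotienting $C_s$ by its lineality subspace and applying the generalized Perron--Frobenius theorem produces a positive real eigenvalue $\mu_s > 0$ of $\rho_s(\beta)$.

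To upgrade $\mu_s$ to a positive Puiseux series eigenvalue of $\rho(\beta)$, note that the characteristic polynomial $\det(\rho(\beta) - \lambda I) \in \mathbb{Z}[t^{\pm 1}][\lambda]$ factors over $\mathbb{E}(\sqrt{-1})$, with its real roots in $\mathbb{E}$. Each $\mu_s$ equals $\lambda_{i(s)}(s)$ for some index $i(s)$ among the finitely many real Puiseux roots $\lambda_1, \dots, \lambda_k \in \mathbb{E}$. Pigeonholing over a sequence $s_\ell \to 0^+$ yields a fixed index $i$ with $\lambda_i(s_\ell) > 0$ for infinitely many $\ell$; since $\lambda_i$ is a Puiseux series whose zeros are isolated for $s > 0$, this forces $\lambda_i(s) > 0$ for all sufficiently small $s > 0$, i.e., $\mathfrak{c}(\lambda_i) > 0$ and $\lambda_i \in Q$, which is the desired eigenvalue.

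The main technical obstacle is ensuring that $C_s$ is a proper subset of $\mathbb{R}^{n-1}$ for some $s$, so that the quotient by lineality is nonzero and Perron--Frobenius yields useful information. This could fail if the projection $D$ is all of $H_1(K;\mathbb{Z})$, which might occur for bi-orders that are ``very convex'' along the commutator direction; handling it should involve a careful choice of bi-order or the observation that $H_1(K;\mathbb{Z})/A$ (with $A := D \cap -D$) is itself bi-orderable and hence has a half-space structure preserved by $\rho(\beta)$ and $t$, which descends through specialization to a proper cone. Secondary issues include controlling the pigeonhole step near branch points where Puiseux roots coalesce, and verifying that the closure and quotient operations preserve the invariance under $\rho_s(\beta)$; both are expected to be manageable with standard techniques from the theory of algebraic functions and convex analysis.
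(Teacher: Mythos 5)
You are attempting to prove \cref{conj:ObstructionToOPBraidViaBurau}, which the paper states as an open conjecture and does not prove; the authors only offer computational evidence (the known non-order-preserving 3-braid families) and explicitly flag it as a hoped-for analog of Clay--Rolfsen. So your proposal has to stand entirely on its own, and as written it does not close: the step you yourself flag as ``the main technical obstacle'' is not a technicality but the actual content of the conjecture. The hypothesis hands you one unknown $\beta$- and $t$-invariant bi-order on $K=\ker\mu$, and nothing forces the image $D$ of its positive cone in $H_1(K;\mathbb{Z})$ to have proper convex hull, nor its specialization $C_s$ to be a proper cone in $\mathbb{R}^{n-1}$; if $C_s=\mathbb{R}^{n-1}$ the Perron--Frobenius step produces nothing. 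Your two proposed remedies do not repair this: you cannot make ``a careful choice of bi-order'' (the bi-order is given, not chosen --- replacing it by a nicer one would require knowing that order-preserving braids preserve orders of a special form, which is itself open), and the claim that $H_1(K;\mathbb{Z})/A$ with $A=D\cap -D$ carries an invariant half-space structure descending to a proper specialized cone is unsubstantiated ($D/A$ need not define a total invariant order on the quotient, and the quotient may carry no usable $\mathbb{R}[t^{\pm1}]$-structure). Note also why the Clay--Rolfsen template does not transfer directly: their proof for a finitely generated group uses the maximal proper convex subgroup and the H\"older embedding of the archimedean quotient into $\mathbb{R}$, but $K$ is an infinite-rank free group, so a maximal proper convex subgroup need not exist; and applying their theorem to $F_n$ itself is vacuous for braids, since the induced map on $H_1(F_n;\mathbb{R})$ is a permutation matrix and always has eigenvalue $1$.

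There is also a mismatch of orderings that your scheme must, and does not yet, bridge in a controlled way. Positivity in $(\mathbb{E},Q)$ means positivity of the lowest-order coefficient, i.e.\ positivity of the branch $\lambda_i(s)$ for all sufficiently small real $s>0$; so you genuinely need positive real eigenvalues of $\rho_s(\beta)$ along a sequence $s_\ell\to 0^+$, which is exactly where the unproved properness of $C_{s_\ell}$ enters. (Even a H\"older-type argument, were it available, would only give positivity at a single specialization $s$ determined by the scaling of $t$ on an archimedean quotient, not near $t=0$.) Your final pigeonhole step is essentially sound once the earlier input is granted --- a root branch that is real and positive at infinitely many points accumulating at $0$ must be a real Puiseux series with positive leading coefficient, hence lies in $Q$ --- and minor slips such as ``$x_1\in P$'' (unneeded, since positive cones of bi-orders are conjugation-invariant) are harmless. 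But without an argument that the invariant cone coming from an arbitrary invariant bi-order has proper image after specialization, the proposal is a strategy sketch for the open problem, not a proof.
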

\noindent
If \cref{conj:ObstructionToOPBraidViaBurau} is true, then the conjecture and \cref{cor:EigenvaluesOfpABraids} would provide a more comprehensive understanding of when a 3-braid is order-preserving. 

As remarked in \cref{subsec:SporadicExamples}, braids with an even number of strands and of one-cycle type do not have positive Burau eigenvalues. Therefore, a refinement of \cite[Question 6.1]{KinRolfsen18} is the following

\begin{quest}
    Does there exist an order-preserving braid whose underlying permutation has one cycle in $B_{2n}$?
\end{quest}

\printbibliography
\end{document}